\pgfplotsset{compat=newest}
\renewcommand{\Re}{\operatorname{Re}}
\renewcommand{\Im}{\operatorname{Im}}
\def\R{\ensuremath\mathbb{R}}
\def\C{\ensuremath\mathbb{C}}
\def\A{\ensuremath\mathbb{A}}
\def\Z{\ensuremath\mathbb{Z}}
\def\Q{\ensuremath\mathbb{Q}}
\def\Hb{\ensuremath\mathbb{H}}
\newtheorem{thm}{Theorem}[section]
\newtheorem{defi}[thm]{Definition}
\newtheorem{cor}[thm]{Corollary}
\newtheorem{lemma}[thm]{Lemma}
\newtheorem{prop}[thm]{Proposition}
\theoremstyle{remark}
\newtheorem{remark}[thm]{Remark}
\def\eps{\ensuremath\varepsilon}
\def\0{\emptyset}
\def\Cl{\text{\rm Cl}}
\def\sym{\mathrm{sym}}
\def\SL{\hbox{\rm SL}}
\def\PSL{\mathrm{PSL}}
\def\PGL{\mathrm{PGL}}
\def\GL{\mathrm{GL}}
\def\modulo{\text{ \rm mod }}
\numberwithin{equation}{section}
\numberwithin{equation}{section}
\begin{document}
\title[Closed geodesics in homology]{Concentration of closed geodesics in the homology of modular curves}
\author{Asbj\o rn Christian Nordentoft}

\address{LAGA, Institut Galil\'{e}e, 99 avenue Jean Baptiste Cl\'{e}ment, 93430 Villetaneuse, France}

\email{\href{mailto:acnordentoft@outlook.com}{acnordentoft@outlook.com}}

\date{\today}
\subjclass[2010]{11F67(primary)}
\begin{abstract}
We prove that the homology classes of closed geodesics associated to subgroups of narrow class groups of real quadratic fields concentrate around the Eisenstein line. This fits into the framework of Duke's Theorem and can be seen as a real quadratic analogue of results of Michel and Liu--Masri--Young on supersingular reduction of CM-elliptic curves.  We also study the level aspect, as well as a homological version of the sup norm problem. Finally we present applications to group theory and modular forms.\end{abstract}
\maketitle
\section{Introduction}
Let $p$ be a prime and consider the modular curve $Y_0(p)=\Gamma_0(p)\backslash \Hb$ of level $p$ equipped with the hyperbolic line element $|dz|/y$ and volume element $dxdy/y^2$, where $\Hb=\{z=x+iy\in \C: y>0\}$ is the upper half-plane and $\Gamma_0(p)\leq \PSL_2(\Z)$ is the level $p$ Hecke congruence group\footnote{Here and throughout we will (contrary to standard conventions) consider the Hecke congruence groups $\Gamma_0(p)$ as subgroups of $\PSL_2(\R)$.} acting on $\Hb$ by linear fractional transformations. Given a real quadratic field $K$ of discriminant $d_K >0$ such that $p$ splits in $K$, one can associate to an element $A\in \Cl_K^+$ of the narrow class group of $K$ an oriented closed geodesic $\mathcal{C}_{A}(p)$ on $Y_0(p)$ (see Section \ref{sec:geodesic} for details). 
In a celebrated paper \cite{Du88} Duke proved that the geodesics $\{\mathcal{C}_{A}(p)\subset Y_0(p): A\in \Cl_K^+\}$ equidistribute with respect to hyperbolic measure as $d_K$ tends to infinity, meaning that
\begin{equation}\label{eq:DukeTHM} \frac{\sum_{A\in \Cl_K^+} \int_{\mathcal{C}_{A}(p)} f(z)\tfrac{|dz|}{y}}{\sum_{A\in \Cl_K^+} \int_{\mathcal{C}_{A}(p)} 1 \tfrac{|dz|}{y}}\rightarrow \frac{1}{\mathrm{vol}(Y_0(p))}\int_{Y_0(p)} f(z)\tfrac{dxdy}{y^2}, \quad d_K\rightarrow \infty, \end{equation}
for $f:Y_0(p)\rightarrow \C$ smooth of compact support (for further results see \cite{EinLindMichVenk12}, \cite{MichelVenk06} and the reference therein). In this paper we study the \emph{homological behavior} of the oriented closed geodesics, i.e. the map
\begin{equation}\label{eq:maphom} \Cl_K^+\rightarrow H_1(Y_0(p),\Z), \quad A\mapsto [\mathcal{C}_{A}(p)]:=\text{class of }\mathcal{C}_{A}(p),\end{equation}
from the narrow class group to the integral homology of $Y_0(p)$ (which one can identity with the abelinization of $\Gamma_0(p)$ modulo torsion) as $d_K \rightarrow \infty$. Our result can be stated as saying that the classes concentrate around the \emph{Eisenstein line}. This is a real quadratic analogue of the equidistribution of supersingular reduction of CM elliptic curves as in \cite{Michel04} (see Section \ref{sec:CM}). We also study the level aspect (analogue of \cite{LMY15}) leading to a homological version of the \emph{sup norm problem} (see Section \ref{sec:amplpretrace}) which might be of independent interest (for another application consult \cite{HumphriesNordentoft22}). We also present applications of our distribution results; one group theoretic and another concerning non-vanishing of cycle integrals of modular forms (see Section \ref{sec:appl}). Finally we refer to Section \ref{sec:geoint} and Remark \ref{rem:risager} below for geometric interpretations of our results.
\subsection{Statement of results}\label{sec:state}
Let $p$ be prime and let $g$ denote the genus of the (non-compact) Riemann surface $Y_0(p)$ satisfying $g=\tfrac{p}{12}+O(1)$. The integral homology 
$$H_1(Y_0(p),\Z)\cong \Z^{2g+1},$$ 
sits as a lattice inside the real homology 
\begin{equation}\label{eq:VpGab}V_p:=H_1(Y_0(p),\R)\cong \Gamma_0(p)^\mathrm{ab}\otimes \R \cong \R^{2g+1}.\end{equation}
We will be interested in how elements of $V_p$ distribute when projected to the $2g$-sphere by which we mean the map 
\begin{equation}V_p-\{0\}\twoheadrightarrow\mathbf{S}(V_p):=(V_p-\{0\})/\R_{>0} , \quad v\mapsto \overline{v}.\end{equation}
where we endow $\mathbf{S}(V_p)$ with the quotient topology of the Euclidean topology of $V_p$. 
We define the \emph{Eisenstein class}; 
\begin{equation}v_{E}(p)\in H_1(Y_0(p),\Z),\end{equation} 
as the homology class of a simple loop going once around the cusp at $\infty$ with positive orientation (which corresponds to the image of the matrix $T=\begin{psmallmatrix}1& 1\\ 0 & 1 \end{psmallmatrix}$ in $\Gamma_0(p)^\mathrm{ab}$ using the identification (\ref{eq:VpGab})). This is indeed a Hecke eigenclass with the same eigenvalues as the weight $2$ Eisenstein series as we will see in Section \ref{sec:cohomH}. Our first main result is the following. 
\begin{thm}\label{thm:std3}
Let $p$ be a prime and consider a real quadratic field $K$ of discriminant $d_K $ such that $p$ splits in $K$ with $p\mathcal{O}_K=\mathfrak{p}_1\mathfrak{p}_2$. Consider a subgroup $H\leq \Cl_K^+$ such that $\mathfrak{p}_1 \notin H$ and $(\sqrt{d_K})\notin H$. 

Then as $d_K \rightarrow \infty$, the classes of the closed geodesics associated to $H$ concentrate around the line generated by the Eisenstein element, meaning that 
\begin{equation}\label{eq:main:std3}  \overline{\sum_{A\in H} [C_A(p)]} \longrightarrow \overline{- v_{E}(p)} , \quad \text{as }d_K \rightarrow \infty, \end{equation}
in the quotient topology of $\mathbf{S}(V_p)$. 
\end{thm}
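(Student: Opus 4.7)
The plan is to exploit the duality between the real homology $V_p$ and the cohomology $H^1(Y_0(p),\R)=V_p^\vee$. Convergence in $\mathbf{S}(V_p)$ of a sequence $\overline{v_n}$ to a direction $\overline{v_\infty}$ is equivalent to the existence of positive scalars $\lambda_n$ with $\lambda_n\langle\omega,v_n\rangle\to\langle\omega,v_\infty\rangle$ for every $\omega\in H^1(Y_0(p),\R)$. Using the Eichler--Shimura decomposition
\begin{equation*}
H^1(Y_0(p),\R)=\R\cdot\omega_E\oplus H^1_{\mathrm{cusp}},
\end{equation*}
in which $\omega_E$ is the Eisenstein class dual to $v_E(p)$ and $H^1_{\mathrm{cusp}}$ annihilates $v_E(p)$, the theorem reduces to two estimates as $d_K\to\infty$: a main term
\begin{equation*}
\langle\omega_E,\textstyle\sum_{A\in H}[C_A(p)]\rangle=-c_p\,|H|\log\varepsilon_K\,(1+o(1))
\end{equation*}
for some explicit $c_p>0$, together with the upper bound
\begin{equation*}
\langle\varphi,\textstyle\sum_{A\in H}[C_A(p)]\rangle=o(|H|\log\varepsilon_K)
\end{equation*}
for every cuspidal $\varphi$. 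Since $h_K^+\log\varepsilon_K$ is of polynomial size in $d_K$ by the class number formula, normalizing by the Eisenstein pairing yields the required limit $-\overline{v_E(p)}$ in $\mathbf{S}(V_p)$.

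The cuspidal bound is a Duke-type input. By Eichler--Shimura, any cuspidal $\varphi$ is a real-linear combination of real and imaginary parts of $f(z)\,dz$ for Hecke newforms $f\in S_2(\Gamma_0(p))$, so the task reduces to bounding the cycle-integral sum $\sum_{A\in H}\int_{C_A(p)}f(z)\,dz$. Expanding $\mathbf{1}_H$ into characters $\chi$ of $\Cl_K^+/H$ converts this into a sum, weighted by $|H|/|\Cl_K^+|$, of twisted integrals $\sum_{A\in\Cl_K^+}\chi(A)\int_{C_A(p)}f\,dz$. By Popa's Waldspurger-type formula, the squared modulus of each such twisted sum is proportional to a central Rankin--Selberg value $L(1/2,f\otimes\Theta_\chi)$, where $\Theta_\chi$ is the theta lift of $\chi$; subconvex bounds in the discriminant aspect (Duke--Friedlander--Iwaniec, Harcos--Michel), with $p$ held fixed, then deliver a power saving $d_K^{-\delta}$ over the convexity bound, which after summing over the $|\Cl_K^+/H|$ relevant characters produces the required $o$-estimate.

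For the Eisenstein main term, I would represent $\omega_E$ (up to a boundary term at the cusps which is exact on the open part of $Y_0(p)$) by the holomorphic Eisenstein differential $(E_2(z)-pE_2(pz))\,dz$. The cycle integral $\int_{C_A(p)}\omega_E$ then becomes a Dedekind--Rademacher-type symbol evaluated on the hyperbolic matrix generating $C_A$. Expanding into characters of $\Cl_K^+/H$ as before, the trivial character contributes the main term via residue analysis of a Shintani-type zeta function, producing $-c_p|H|\log\varepsilon_K$, with the negative sign arising from the orientation convention for the loop around $\infty$; non-trivial characters contribute a power-saving error by subconvexity for Hecke $L$-series over $K$. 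The two hypotheses on $H$ enter precisely here to prevent vanishing of the main term: $(\sqrt{d_K})\notin H$ excludes the ambiguous class, so that the narrow unit length on the relevant orbits is genuinely $\log\varepsilon_K$ and not zero; and $\mathfrak{p}_1\notin H$ prevents cancellation between the contributions of the two cusps $0$ and $\infty$ of $Y_0(p)$, which correspond to $\mathfrak{p}_1$ and $\mathfrak{p}_2$ respectively.

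The hard part, in my view, is the Eisenstein main term: identifying the Eisenstein cocycle concretely on $Y_0(p)$, carrying out the Shintani residue computation with the correct normalizations so that the limiting direction is exactly $-\overline{v_E(p)}$, and verifying that the exclusions $\mathfrak{p}_1,(\sqrt{d_K})\notin H$ are sharp non-vanishing conditions rather than merely sufficient ones. The cuspidal input is, for $p$ fixed, a now-standard application of Popa's formula together with subconvex bounds for Rankin--Selberg $L$-functions.
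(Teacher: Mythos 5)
Your high-level architecture matches the paper's: split $H^1(Y_0(p),\R)$ into Eisenstein and cuspidal parts, treat the cuspidal pairings via character expansion plus Popa's Waldspurger-type formula and subconvexity for Rankin--Selberg $L$-values in the discriminant aspect, and produce a dominant negative Eisenstein contribution so that the normalized class converges to $-\overline{v_E(p)}$. The cuspidal half of your argument is essentially what the paper does (the paper uses Michel's bound $L(\pi_f\otimes\pi_\chi,1/2)\ll_f d_K^{1/2-1/1057}$).

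The Eisenstein main term is where your proposal breaks down. You propose that after character expansion the \emph{trivial} character supplies the main term $-c_p\,|H|\log\varepsilon_K$ via a Shintani-type residue, with nontrivial characters giving a power-saving error. This is wrong: the trivial character contributes exactly zero. The paper's key formula (a Hecke/Dedekind--Rademacher identity applied at level $p$) is
\begin{equation*}
\sum_{A\in\Cl_K^+}\langle[\mathcal{C}_A(p)],\omega_E(p)\rangle\,\chi(A)=\frac{6}{p-1}\bigl(1-\chi(J)\bigr)\bigl(\overline{\chi(\mathfrak p_1)}-1\bigr)L(\chi,0),
\end{equation*}
and both prefactors vanish when $\chi$ is trivial; indeed they vanish whenever $\chi(J)=1$ or $\chi(\mathfrak p_1)=1$. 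The nonvanishing of the Eisenstein contribution therefore comes entirely from the \emph{nontrivial} characters of $\Cl_K^+/H$ with $\chi(J)=-1$, and for those one needs a \emph{lower} bound (a Siegel-type bound $L(\chi,0)\gg_\eps d_K^{1/2-\eps}$, via the functional equation and $L(\chi,0)=\pi^{-2}d_K^{1/2}L(\chi,1)>0$), not subconvexity, which is an upper bound. The sign and definiteness come from the fact that each term $(1-\Re\chi(\mathfrak p_1))L(\chi,0)\geq 0$, which is what makes the whole Eisenstein sum strictly negative. The two hypotheses on $H$ are used precisely to guarantee that some $\chi$ trivial on $H$ has $\chi(J)=-1$ and $\chi(\mathfrak p_1)\neq 1$; your heuristic that $\mathfrak p_1\notin H$ ``prevents cancellation between the two cusps'' is a reasonable picture but is not the actual mechanism, and the role of $(\sqrt{d_K})\notin H$ is not about ``genuine unit length'' but about the factor $(1-\chi(J))$. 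Without this you cannot even establish that $\sum_{A\in H}\langle[\mathcal{C}_A(p)],\omega_E(p)\rangle$ is nonzero, which is the crux of the theorem.

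One further discrepancy: your proposed main-term size $|H|\log\varepsilon_K$ is not what the paper proves; the lower bound there is $\asymp_\eps d_K^{1/2-\eps}/p$ \emph{independently} of $|H|$, because the factor $|H|/|\Cl_K^+|$ is balanced by the number $|\Cl_K^+|/|H|$ of characters trivial on $H$. When $|H|$ is small, $|H|\log\varepsilon_K$ can be much smaller than $d_K^{1/2-\eps}$, so the two scalings are genuinely different; the paper's is the one needed to dominate the cuspidal error uniformly.
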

This is the real quadratic analogue of a distribution result due to Michel \cite{Michel04} (see also \cite{ElkiesOnoYang05}, \cite{Yang08}, \cite{Kane09}, \cite{LMY15}, \cite{AkaLuethiMichelWieser22}) concerning the map 
\begin{equation}\label{eq:michelCM} \Cl_K\rightarrow \mathcal{E\ell\ell}^{ss}(\mathbb{F}_{p^2}),\end{equation} 
from the class group of an imaginary quadratic field $K$ (in which $p$ is inert) to the isomorphism classes of supersingular elliptic curves defined over $\mathbb{F}_{p^2}$ (or equivalently over $\overline{\mathbb{F}_p}$). We refer to Section \ref{sec:CM} for a more elaborate explanation of the analogy between the two cases. We note also that our results can be phrased as a weak convergence statement (see Theorem \ref{thm:std}) which resembles (\ref{eq:DukeTHM}). Another useful analogue to have in mind is the distribution of lattice points on the unit sphere $S^2\subset \R^3$ \cite{Du88}:
$$\frac{1}{\sqrt{d}}\{(a,b,c)\in \Z^3: a^2+b^2+c^2=d\}\subset S^2,$$
as $d\rightarrow \infty$ (which is the basic case of Linnik's Problem, see \cite{MichelVenk06}); in both cases one rescales the points to get a convergences of measures to respectively, the Haar measure and the point measure at (minus) the Eisenstein element.

We have the following useful corollary. In Section \ref{sec:geoint} below we will use this to explain the geometric content of our result. 
\begin{cor}
Let $K$ and $H\leq \Cl_K^+$ be as in Theorem \ref{thm:std3} and consider a basis $B$  of $ V_p$ containing $v_{E}(p)$. Then for $d_K$ sufficiently large, the $v_{E}(p)$-coordinate of the vector
$$\sum_{A\in H} [\mathcal{C}_{A}(p)]\in V_p,$$ 
in the basis $B$ has strictly maximal absolute value among all coordinates (and in particular is non-zero).    
\end{cor}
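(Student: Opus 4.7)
The corollary is essentially a formal consequence of Theorem \ref{thm:std3} together with the observation that the coordinate functionals attached to the basis $B$ are continuous. The plan is to unpack the quotient topology on $\mathbf{S}(V_p)$, apply continuity, and then compare absolute values of coordinates.

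First I would set $u_d := \sum_{A\in H} [\mathcal{C}_A(p)] \in V_p$. By definition of the quotient topology, convergence $\overline{u_d}\to \overline{-v_E(p)}$ in $\mathbf{S}(V_p) = (V_p-\{0\})/\R_{>0}$ is equivalent to the existence of positive scalars $\lambda_d>0$ such that $\lambda_d u_d \to -v_E(p)$ in the Euclidean topology of $V_p$ (in particular $u_d\neq 0$ for $d_K$ large). Now write $B=\{v_E(p), b_2,\ldots,b_{2g+1}\}$ and let $\ell_1,\ldots,\ell_{2g+1}:V_p\to\R$ denote the corresponding coordinate functionals, so that $\ell_1$ reads off the $v_E(p)$-coordinate. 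These are continuous linear forms, so $\ell_i(\lambda_d u_d) \to \ell_i(-v_E(p))$. The right-hand side equals $-1$ for $i=1$ and $0$ for $i\geq 2$.

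The conclusion then follows by dividing: for each $i\geq 2$,
\[
\frac{|\ell_i(u_d)|}{|\ell_1(u_d)|} = \frac{|\ell_i(\lambda_d u_d)|}{|\ell_1(\lambda_d u_d)|} \longrightarrow \frac{0}{1}=0,
\]
where the first equality uses $\lambda_d>0$. Since there are only finitely many ($2g$) indices $i\geq 2$, for $d_K$ sufficiently large all of these ratios are simultaneously strictly less than $1$, which gives $|\ell_1(u_d)|>|\ell_i(u_d)|$ for every $i\geq 2$; in particular $\ell_1(u_d)\neq 0$.

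There is no real obstacle here; the content is entirely in Theorem \ref{thm:std3}. The only mild subtlety is verifying that convergence in the quotient $V_p/\R_{>0}$ can be lifted to convergence in $V_p$ after positive rescaling, but this is standard for quotients of finite-dimensional normed spaces by the action of $\R_{>0}$ (one may, for instance, choose $\lambda_d$ so that $\|\lambda_d u_d\| = \|v_E(p)\|$ and use sequential compactness of the Euclidean sphere).
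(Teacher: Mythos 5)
Your proof is correct and follows the same line as the paper's (implicit) proof: the paper already establishes in the reduction of Theorem \ref{thm:std3} to Theorem \ref{thm:std} that convergence in $\mathbf{S}(V_p)$ is equivalent to convergence of the normalized vectors for any fixed norm, and applying this with the $|\!|\cdot|\!|_{B,\infty}$ norm for a basis $B$ containing $v_E(p)$ immediately gives the coordinate comparison you carry out. Your justification via a positive-rescaling lift is equivalent to the paper's equation (\ref{eq:convergenceformulation}), and the rest is the same finite comparison of coordinates.
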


\begin{remark}
The conditions on the level and discriminant in the theorems above are necessary. If either $p\mathcal{O}_K=\mathfrak{p}_1\mathfrak{p}_2$ with $\mathfrak{p}_1 \in (\Cl_K^+)^2$ or $(\sqrt{d_K})\in H$ then there is a basis for $V_p$ (the Hecke basis) such that the $v_{E}(p)$-coordinate of 
$$  \sum_{A\in (\Cl_K^+)^2} [\mathcal{C}_{A}(p)], $$
is zero. In Section \ref{sec:geodesic} we construct for each $p$ an infinite family of real quadratic fields $K$ that satisfy the conditions in Theorem \ref{thm:std3}. It is unclear whether the statement should be true for any genus, say.

\end{remark}
\begin{remark}
The restriction to prime level ensures that there are no old forms and also that there is a unique Eisenstein class in the (co)homology. The main steps in the proofs should however work for general (square-free) level, but the statements would have to be modified accordingly. 
\end{remark}


\subsubsection{Varying the level}
Our second result is concerned with the level aspect in the sense that we will obtain a distribution statement uniform in $p$. Notice first of all that for $v_0,v_1, \ldots\in  V_p-\{0\} $, the convergence 
$$\overline{v_n}\longrightarrow \overline{v_0}\in \mathbf{S}(V_p)\text{ as } n\rightarrow \infty,$$
is equivalent to 
\begin{equation}\label{eq:convergenceformulation} \left|\! \left|\frac{v_n}{|\!|v_n|\!|}-\frac{v_0}{|\!|v_0|\!|}\right|\! \right| \rightarrow 0\text{ as } n\rightarrow \infty,\end{equation}
for any (fixed) norm $|\!|\cdot|\!|$ of $V_p$. A natural question is to ask for bounds for the left hand side of (\ref{eq:convergenceformulation}) \emph{uniform} in $d_K$ and $p$ for certain specific norms of $V_p$. A basis $B$ for $H_1(Y_0(p),\R)$ defines an isomorphism of vectors spaces $V_p\cong \R^{2g+1}$ (by mapping $B$ to the standard basis of $\R^{2g+1}$) and by pulling back the $L^r$-norm for $1\leq r \leq \infty$ we obtain a norm on $V_p$ which we denote by $|\!|\cdot|\!|_{B,r}$ (see (\ref{eq:normsB}) for details). We notice that one can choose a sequence of bases for each $p$ such that the convergence in (\ref{eq:convergenceformulation}) (with say $r=1$) is arbitrarily slow in $p$. This is parallel to the case of the distribution of CM points on modular curves in the level aspect as considered in \cite{LiuMasriYoung13}; here one has to consider {\lq\lq}compatible{\rq\rq} test functions as the level varies. Similarly, we will consider certain {\lq\lq}compatible{\rq\rq} bases of $H_1(Y_0(p),\R)$.  To define these we recall (see Section \ref{sec:zagier}) that the following matrices generate $\Gamma_0(p)$; 
\begin{equation}\label{eq:S}\mathcal{S}(p):=\left\{\begin{psmallmatrix} 1 & 1 \\0 & 1 \end{psmallmatrix}\right\}\cup \left\{\begin{psmallmatrix} a & -(aa^{\ast} +1)/p \\p & -a^\ast \end{psmallmatrix}: 0< a<p, \right\},\end{equation} 
where $0< a^\ast <p$ is such that $aa^{\ast} \equiv -1\modulo p$. We say that a basis $B$ of $H_1(Y_0(p), \R)$ is a \emph{basic basis of level $p$} if it consists of homology classes containing the oriented geodesic connecting $i\in \Hb$ and $\sigma i\in \Hb$ for some $\sigma\in \mathcal{S}(p)$. We think of these bases as analogues of the sets $\mathcal{E\ell\ell}^{ss}(\mathbb{F}_{p^2})$ considered in the imaginary quadratic case (\ref{eq:michelCM}) (see Section \ref{sec:CM}). Our second main result is the following.
\begin{thm}\label{thm:main} Let $p$ be prime and ${B}$ a basic basis of level $p$ with associated norm $|\!|\cdot|\!|_{{B},\infty}$ (note that $|\!|v_{E}(p)|\!|_{B,\infty}=1$). Let $K$ be a real quadratic field of discriminant $d_K $ with no unit of norm $-1$ such that $p$ splits in $K$ with $p\mathcal{O}_K=\mathfrak{p}_1\mathfrak{p}_2$ and $\mathfrak{p}_1\notin (\Cl^+_K)^2$.  Then we have 
\begin{equation}\label{eq:thm1}  \left|\! \left|\frac{\sum_{A\in (\Cl^+_K)^2} [\mathcal{C}_{A}(p)]}{|\!|\sum_{A\in (\Cl^+_K)^2}  [\mathcal{C}_{A}(p)]|\!|_{B,\infty}}+v_{E}(p)\right|\! \right|_{B,\infty} \ll_{\eps} p^{2+\eps} d_K^{-1/12+\eps}. \end{equation}

\end{thm}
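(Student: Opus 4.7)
The plan is to split $v:=\sum_{A\in(\Cl_K^+)^2}[\mathcal{C}_A(p)]$ along the Hecke-equivariant decomposition $V_p=\R v_{E}(p)\oplus V_p^{\mathrm{cusp}}$, lower-bounding the Eisenstein coefficient while upper-bounding each $B$-coordinate of the cuspidal projection. The Eisenstein coefficient of $v$ is obtained by pairing against the cohomology class dual to $v_{E}(p)$; by the Hecke-eigenvalue characterization of $v_{E}(p)$ and a direct geometric computation (the total winding of the geodesic orbit around the cusp at $\infty$, or equivalently a regulator computation), this pairing should equal a non-zero multiple of $|(\Cl_K^+)^2|\cdot R_K$, where $R_K$ is the regulator of $K$. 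Siegel's bound then yields the lower bound $|(\Cl_K^+)^2|\cdot R_K\gg_{\eps} d_K^{1/2-\eps}$. The assumptions $\mathfrak{p}_1\notin(\Cl_K^+)^2$ and the absence of a unit of norm $-1$ are precisely what ensure the leading constant does not vanish (cf.\ the Remark after Theorem~\ref{thm:std3}).

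For the cuspidal part, the Eichler-Shimura integration pairing expresses each $B$-coordinate of the cuspidal projection of $v$ as a linear combination of cycle integrals $\int_{\mathcal{C}_A(p)}f_j(z)\,dz$ and their conjugates, where $\{f_j\}$ is a Hecke eigenbasis of $S_2(\Gamma_0(p))$ and the coefficients of the transition matrix are controlled by the modular-symbol periods $\int_{i}^{\sigma i}f_j\,dz$ for $\sigma\in\mathcal{S}(p)$. Bounding these transition coefficients using Deligne's bound on Fourier coefficients and standard estimates on modular symbols at the cusps $0,\infty$ should produce polynomial control in $p$, yielding the $p^{2+\eps}$ factor in (\ref{eq:thm1}). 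The problem thereby reduces to bounding, for each $f_j$, the cycle-integral sum
\[\sum_{A\in(\Cl_K^+)^2}\int_{\mathcal{C}_A(p)}f_j(z)\,dz=\frac{1}{[\Cl_K^+:(\Cl_K^+)^2]}\sum_{\chi}\sum_{A\in\Cl_K^+}\chi(A)\int_{\mathcal{C}_A(p)}f_j(z)\,dz,\]
where $\chi$ runs over the genus characters of $\Cl_K^+$. For each such $\chi$, Popa's Waldspurger-type formula expresses the squared modulus of the inner twisted sum, up to explicit archimedean factors, in terms of the product of central values $L(1/2,f_j\otimes\chi_{d_1})L(1/2,f_j\otimes\chi_{d_2})$, where $d_K=d_1d_2$ is the factorization attached to $\chi$; here the hypothesis that $K$ has no unit of norm $-1$ guarantees Popa's formula applies in this form. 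Burgess-type subconvexity in the discriminant aspect (e.g.\ due to Blomer-Harcos) then saves $d_K^{-1/12+\eps}$ relative to the trivial (convex) bound, and since the genus group has order $\ll d_K^\eps$ summing over $\chi$ preserves this saving.

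The hardest part will be obtaining the above subconvexity estimate with \emph{polynomial} dependence on the level $p$, since the final $p^{2+\eps}$ exponent in (\ref{eq:thm1}) results from combining the change-of-basis bound with the $p$-aspect of hybrid subconvexity for $L(1/2,f_j\otimes\chi_d)$. A secondary subtlety, less technically demanding but still delicate, is the explicit evaluation of the Eisenstein coefficient under the splitting hypotheses on $\mathfrak{p}_1$ and on the sign of the norm of the fundamental unit, so as to ensure that $R_K$ appears cleanly and with non-zero leading constant; combined with the cuspidal upper bound and the normalization, this yields the stated ratio $p^{2+\eps}d_K^{-1/12+\eps}$.
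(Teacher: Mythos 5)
Your overall architecture -- lower-bound the Eisenstein coordinate via Hecke's formula and Siegel, upper-bound the cuspidal Weyl sums via Popa's formula, factor through genus characters, and apply subconvexity -- is the same as the paper's, and that part of the plan is sound. But there is a genuine gap at the step you describe as "bounding these transition coefficients using Deligne's bound... and standard estimates on modular symbols." Writing $\langle v,\omega\rangle=\sum_{f,\pm}\langle v,\omega_f^{\pm}\rangle\langle v_f^{\pm},\omega\rangle+\langle v,\omega_E(p)\rangle\langle v_E(p),\omega\rangle$ for $\omega\in B^{\ast}$, the quantities you must control are the \emph{dual} coefficients $\langle v_f^{\pm},\omega\rangle$, i.e.\ the coordinates of the Hecke-normalized dual homology classes $v_f^{\pm}$ in the basic basis. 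These are entries of the \emph{inverse} of the period matrix $(\int_i^{\sigma i}f_j\,dz)$, not the forward periods themselves; they are governed by the minimal periods $c_f^{\pm}$ of $f$, and a polynomial upper bound for an individual $c_f^{\pm}$ in terms of $p$ is known to imply a weak form of the ABC conjecture (Goldfeld). So no combination of Deligne's bound and upper estimates on modular symbols can produce the polynomial control in $p$ you assert. The paper's way around this is Theorem \ref{thm:maincontr}: a second-moment bound $\sum_{f,\pm}|\langle v_f^{\pm},\omega\rangle|^2\ll_\eps p^{1+\eps}$, proved by a theta-correspondence argument that restores positivity, a lower bound for $\sum_x|\langle\{\tfrac{x}{p},\infty\},\omega_f^{\pm}\rangle|^2$ obtained from the first moment of $L(f,\chi,1/2)$ via Birch--Stevens, and a geometric-coding count bounding $\langle T_n\{\tfrac{x}{p},\infty\},\mathbb{P}_{\mathrm{cusp}}\omega\rangle$. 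This is the main technical content behind the $p^{2+\eps}$ factor and is entirely absent from your proposal.

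Two smaller corrections. First, Burgess-quality subconvexity for $L(1/2,f\otimes\chi_{d_i})$ (exponent $d_i^{3/8}$) only yields $d_K^{-1/16+\eps}$ after Popa's formula; to reach $d_K^{-1/12+\eps}$ you need the Weyl-quality exponent $d_i^{1/3}$, which the paper extracts from the Petrow--Young third-moment bound over $f\in\mathcal{B}_p$ via H\"older with exponents $(3,3,3)$ (this also supplies the level dependence). Second, the Eisenstein coordinate is not a multiple of $|(\Cl_K^+)^2|\cdot R_K$: in Hecke's formula the trivial-character term carries the factor $(1-\chi(J))(\overline{\chi(A_p)}-1)$ and vanishes, so the coordinate is a sum of $(1-\operatorname{Re}\chi(A_p))L(\chi,0)$ over nontrivial genus characters with $\chi(J)=-1$; the hypotheses on $\mathfrak{p}_1$ and on the norm of the fundamental unit guarantee this sum is nonempty and single-signed, and the resulting lower bound is $\gg_\eps d_K^{1/2-\eps}p^{-1}$ (the $p^{-1}$ from the normalization of $E_{2,p}$ matters for the final exponent of $p$).
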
 
As above we get the following corollary.
\begin{cor}
Let $p$ and $K$ be as above and consider a basic basis ${B}\subset V_p$ of level $p$ containing $v_{E}(p)$. Then for $d_K\gg_\eps p^{24+\eps}$, the $v_{E}(p)$-coordinate of the vector
$$\sum_{A\in (\Cl_K^+)^2} [\mathcal{C}_{A}(p)]\in V_p,$$ 
in the basis ${B}$ has strictly maximal absolute value among all coordinates (and in particular is non-zero).    
\end{cor}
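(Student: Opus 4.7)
The plan is to deduce the corollary directly from Theorem \ref{thm:main} by unpacking what the $L^\infty$ norm on coordinates means. First I would set $v := \sum_{A \in (\Cl_K^+)^2} [\mathcal{C}_A(p)]$ and expand in the basis $B = \{b_0, b_1, \ldots, b_{2g}\}$ where, by hypothesis, $b_0 = v_E(p)$. Writing $v = \sum_{j} x_j b_j$, the norm $\|\cdot\|_{B,\infty}$ is by definition $\max_j |x_j|$. I would normalize by setting $\tilde v := v/\|v\|_{B,\infty}$, so the coordinates $\tilde x_j = x_j/\|v\|_{B,\infty}$ of $\tilde v$ satisfy $\max_j |\tilde x_j| = 1$.

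Next I would observe that in coordinates the vector $\tilde v + v_E(p)$ has $b_0$-coordinate equal to $\tilde x_0 + 1$ and $b_j$-coordinate equal to $\tilde x_j$ for $j \geq 1$. Invoking Theorem \ref{thm:main} with an implicit constant $C_\eps$ yields
$$|\tilde x_0 + 1| \leq C_\eps\, p^{2+\eps} d_K^{-1/12+\eps}, \qquad |\tilde x_j| \leq C_\eps\, p^{2+\eps} d_K^{-1/12+\eps} \quad (j \geq 1).$$
By the reverse triangle inequality, the first bound gives $|\tilde x_0| \geq 1 - C_\eps\, p^{2+\eps} d_K^{-1/12+\eps}$.

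The final step is quantitative: I want the upper bound $\delta := C_\eps\, p^{2+\eps} d_K^{-1/12+\eps}$ to be strictly less than $1/2$, since then $|\tilde x_0| \geq 1 - \delta > \delta \geq |\tilde x_j|$ for all $j \geq 1$, giving the strict maximality of the $v_E(p)$-coordinate (which is preserved when rescaling from $\tilde v$ back to $v$, since proportional nonzero vectors have the same coordinate ranking in absolute value). The inequality $\delta < 1/2$ rearranges to $d_K^{1/12-\eps} > 2 C_\eps\, p^{2+\eps}$, i.e.\ $d_K > (2C_\eps)^{12/(1-12\eps)} p^{(2+\eps)\cdot 12/(1-12\eps)}$. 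For $\eps > 0$ sufficiently small the exponent on $p$ is $24 + O(\eps)$, so the condition is absorbed into $d_K \gg_\eps p^{24+\eps}$ (after relabeling $\eps$).

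There is no real obstacle here: the corollary is a soft consequence of the quantitative statement in Theorem \ref{thm:main}, and all the substantive work lies in proving that theorem itself. The only point requiring a tiny bit of care is the bookkeeping of the $\eps$'s when converting the bound $\delta < 1/2$ into the clean form $d_K \gg_\eps p^{24+\eps}$, but this is routine.
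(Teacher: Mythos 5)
Your argument is correct and is exactly the routine deduction the paper has in mind (the paper omits the proof, saying only ``As above we get the following corollary''): unpack the sup-norm bound of Theorem \ref{thm:main} coordinatewise, note $|\tilde x_0|\geq 1-\delta$ and $|\tilde x_j|\leq\delta$ for $j\geq 1$, and require $\delta<1/2$, which after the $\eps$-bookkeeping is precisely $d_K\gg_\eps p^{24+\eps}$. No issues.
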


The above result has a very close analogue in the imaginary quadratic case as worked out by Liu--Masri--Young \cite{LMY15} who studied a level $p$ version of the equidistribution of the map (\ref{eq:michelCM}). One can identify the finite set $\mathcal{E\ell\ell}^{ss}(\mathbb{F}_{p^2})=\{e_1,\ldots, e_n\}$ with the connected components of a certain conic curve $X^{p,\infty}$ defined from the quaternion algebra over $\Q$ ramified at $p$ and $\infty$. Thus $\{e_1,\ldots, e_n\}$ defines a basis for the $0$-th homology group $H_0(X^{p,\infty},\Z)$. In this language the results of \cite{LMY15} can be phrased exactly as (\ref{eq:thm1}) (see (\ref{eq:weakmichel2})). We will develop this analogy in greater detail in Section \ref{sec:CM}.   
\subsection{Applications}\label{sec:appl}
We will now present some applications of our results; one is group theoretic and the other has to do with non-vanishing of cycle integrals of modular forms. 

Consider a prime $p\equiv -1 \modulo 12$ (for simplicity) and put $n=\frac{p+1}{3}$. Then $\Gamma_0(p)$ considered as a subgroup of $\PSL_2(\Z)$ is torsion-free with $\Gamma_0(p)^\mathrm{ab}\cong \Z^{n/2+1}$ and thus we know by the Kurosh subgroup theorem that $\Gamma_0(p)$ is a free group on $n/2+1$ generators (being a subgroup of $\PSL_2(\Z)\cong \Z/2\Z \ast \Z/3\Z $). Let
$$\frac{0}{1}=\frac{a_{0}}{b_{0}}<\frac{a_1}{b_1}<\ldots<\frac{a_{n-1}}{b_{n-1}}< \frac{a_{n}}{b_{n}}=\frac{1}{1}, $$
be a \emph{Farey symbol of level $p$} in the terminology of Kulkarni \cite{Kulkarni91}, meaning that $a_i/b_i$ are reduced fractions such that $a_ib_{i+1}-a_{i+1}b_i=1$ for all $1\leq i<n$ and that there is a pairing $i\leftrightarrow i^\ast$ on $0,\ldots, n-1$ satisfying 
$$ b_ib_{i^\ast}+b_{i+1}b_{i^\ast+1}\equiv 0 \modulo p.  $$ 
Such a symbol always exists, even one that is symmetric around $1/2$, by \cite[Section 13]{Kulkarni91}. It follows from \cite{Kulkarni91} and a classical result of Poincar\'{e} that  $\Gamma_0(p)$ is freely generated by (the images inside $\PSL_2(\Z)$ of) $T=\begin{psmallmatrix} 1 & 1 \\ 0 & 1\end{psmallmatrix}$ together with the $n/2=\frac{p+1}{6}$ matrices  
\begin{align}\label{eq:basisK} \begin{pmatrix} a_{i^\ast+1}b_{i+1}+a_{i^\ast}b_i & -a_ia_{i^\ast}-a_{i+1}a_{i^\ast+1} \\ b_ib_{i^\ast}+b_{i+1}b_{i^\ast+1} & -a_{i+1}b_{i^\ast+1}-a_{i}b_{i^\ast}\end{pmatrix} \text{ for $i< i^\ast$  a pair}. \end{align}
The following group theoretic application can be thought of as an analogue of Linnik's Theorem on the smallest prime in arithmetic progressions (see also Theorem \ref{thm:LMYcor} below).

\begin{cor}\label{cor:group}
Let $p\equiv -1\modulo 12$ be prime and consider a real quadratic field $K$ of discriminant $d_K $ and (wide) class number one such that $p$ splits in $K$ with $p\mathcal{O}_K=\mathfrak{p}_1\mathfrak{p}_2$ such that $\mathfrak{p}_1$ does not have a generator of positive norm. Let $(u,v)$ be the positive half-integer solution to $u^2-d_Kv^2=1$ such that $v$  is minimal among all such and let $a,b,c\in \Z$ satisfy $b^2-4ac=d_K$ and $p|a$. Then for $d_K \gg_\eps p^{24+\eps}$ the matrix $\begin{psmallmatrix} u+bv & 2cv\\-2av & u-bv\end{psmallmatrix}\in \Gamma_0(p)$ is \underline{not} contained in the subgroup generated by the matrices (\ref{eq:basisK}). 
\end{cor}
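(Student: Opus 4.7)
The plan is to realize the matrix $M := \begin{psmallmatrix} u+bv & 2cv \\ -2av & u-bv\end{psmallmatrix}$ as the primitive hyperbolic stabilizer of a specific closed geodesic on $Y_0(p)$, translate non-containment in $\langle g_1, \ldots, g_{n/2}\rangle$ (where $g_1,\ldots,g_{n/2}$ denote the matrices listed in (\ref{eq:basisK})) into the nonvanishing of the $v_{E}(p)$-coordinate of $[\mathcal{C}_e(p)]$ in the Kulkarni basis, and then invoke Theorem \ref{thm:main}. First I would check that $p\mid a$ gives $M\in \Gamma_0(p)$, and that the Pell identity $u^2-d_Kv^2=1$ combined with $b^2-4ac=d_K$ yields $\det M=1$ (while $\operatorname{tr} M=2u>2$ ensures $M$ is hyperbolic). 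A direct computation shows that $M$ preserves the binary form $[a,b,c]$, hence stabilizes the geodesic in $\Hb$ through the roots of $aX^2+bX+c=0$, and the minimality of $(u,v)$ forces $M$ to be primitive. The hypotheses on $K$ (wide class number one, $\mathfrak{p}_1$ without generator of positive norm) force $\Cl_K^+\cong \Z/2\Z$ with $\mathfrak{p}_1$ the nontrivial class, so $(\Cl_K^+)^2=\{e\}$ and Theorem \ref{thm:main} applies. Taking $[a,b,c]$ to represent the principal narrow class, $M$ becomes the primitive stabilizer of $\mathcal{C}_e(p)$ and its image in $V_p = \Gamma_0(p)^{\mathrm{ab}}\otimes \R$ is precisely $[\mathcal{C}_e(p)]$.

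The second step is a group-theoretic reduction. By Poincar\'e's polygon theorem applied to Kulkarni's Farey symbol, $\Gamma_0(p)$ is freely generated by $T$ together with the matrices $g_1,\ldots,g_{n/2}$ in (\ref{eq:basisK}), so $\{v_{E}(p),[g_1],\ldots,[g_{n/2}]\}$ forms a $\Z$-basis of $H_1(Y_0(p),\Z)$. Any element of the free subgroup $\langle g_1,\ldots,g_{n/2}\rangle$ is a word in the $g_i$ alone, so its abelianization lies in $\bigoplus_i \Z\,[g_i]$; equivalently, its $v_{E}(p)$-coordinate in the Kulkarni basis vanishes. The proof therefore reduces to showing that the $v_{E}(p)$-coordinate of $[\mathcal{C}_e(p)]$ in the Kulkarni basis is nonzero whenever $d_K\gg_\eps p^{24+\eps}$.

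Finally I would derive this nonvanishing from Theorem \ref{thm:main} after passing from a basic basis to the Kulkarni basis. Both the matrices in $\mathcal{S}(p)$ and those in (\ref{eq:basisK}) have entries of size $O(p^{O(1)})$, and by continued-fraction-type word length estimates each Kulkarni generator can be rewritten as a word of polylogarithmic length in $\mathcal{S}(p)$; consequently the unimodular change-of-basis matrix between a basic basis containing $v_{E}(p)$ and the Kulkarni basis has entries of size polynomial in $p$. Applying this change of basis to the estimate (\ref{eq:thm1}) inflates the error by at worst a polynomial-in-$p$ factor, which is absorbed into the threshold $d_K\gg_\eps p^{24+\eps}$ after enlarging the implicit exponent if necessary. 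The resulting inequality forces the $v_{E}(p)$-coordinate of $[\mathcal{C}_e(p)]$ in the Kulkarni basis to be close to $\pm 1$ on the normalization scale of Theorem \ref{thm:main}, hence nonzero, completing the argument. I expect the main obstacle to be precisely this last bookkeeping step: obtaining uniform polynomial-in-$p$ control on the change of basis between the geometrically natural basic basis and the arithmetically natural Kulkarni basis, so that the subconvexity-driven estimate of Theorem \ref{thm:main} remains numerically effective.
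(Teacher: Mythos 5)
Your first two steps match the paper's strategy: identify the matrix with the class $[\mathcal{C}_I(p)]$ of the principal geodesic, note that $(\Cl_K^+)^2=\{I\}$ under the hypotheses, and observe that since $\Gamma_0(p)$ is free on $T$ together with the matrices (\ref{eq:basisK}), membership in the subgroup they generate would force the $v_E(p)$-coordinate in the Kulkarni basis to vanish. The genuine gap is in your final step. You pass from the basic basis $B$ to the Kulkarni basis by a ``polynomial-in-$p$'' change-of-basis bound, and you concede that the resulting loss must be ``absorbed into the threshold \ldots after enlarging the implicit exponent if necessary.'' But the exponent $24$ is part of the statement: Theorem \ref{thm:main} gives an error $O_\eps(p^{2+\eps}d_K^{-1/12+\eps})$, and the threshold $d_K\gg_\eps p^{24+\eps}$ is exactly the point where this error drops below $1$; any extra factor $p^{C}$ with $C>0$ from the change of basis turns the threshold into $p^{24+12C+\eps}$ and proves a strictly weaker statement. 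Moreover, your word-length estimate points the wrong way: to bound the coefficients $\langle v,\omega_0\rangle$ for $v\in B$ (with $\omega_0$ dual to $v_E(p)$ in the Kulkarni basis) you must expand the \emph{basic} generators as words in the \emph{Kulkarni} generators, not vice versa, and inverting an integer matrix with polynomially bounded entries does not in general keep the entries polynomially bounded when the dimension grows like $p$. Finally, even a bound $|\langle v,\omega_0\rangle|\ll 1$ for each of the $\sim p/6$ elements $v\in B$ would contribute a factor $p$ to the error sum $\sum_{v\in B}|\langle v,\omega_0\rangle|$ and again degrade the exponent.

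The paper closes this gap (in the proof of its Corollary \ref{cor:groupgen}) not by estimating the change of basis but by proving the exact identity $\langle v,\omega_0\rangle=0$ for every $v\in B\setminus\{v_E(p)\}$. Concretely, for $v=\{z,\gamma_v z\}$ with $\gamma_v\in\mathcal{S}(\mathcal{F}_\mathrm{Zag}(p))$ one chooses an explicit path from $\gamma_v^{-1}\infty+iY$ to $\gamma_v(\gamma_v^{-1}\infty+iY)$ and, via the geometric coding of Corollary \ref{cor:geocoding}, bounds $|\langle v,\omega_0\rangle|$ by the number of intersections of this path with $\Gamma_0(p)$-translates of the side $\{iy:y>0\}$ of the special fundamental polygon; an elementary Farey-fraction argument (using $0<\gamma_v\infty,\gamma_v^{-1}\infty<1$ and the mediant inequality $|b/d-a/c|=1/|cd|$) shows there are no such intersections. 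Hence $\sum_{v\in B}|\langle v,\omega_0\rangle|=1$, the error in (\ref{eq:grouptheory}) is exactly $O_\eps(p^{2+\eps}d_K^{-1/12+\eps})$, and the threshold $p^{24+\eps}$ survives. To repair your argument you would need to supply this exact vanishing (or an $O(1)$ bound on the full sum of coefficients), rather than a polynomial bound on the change-of-basis matrix.
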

The conjugacy class in $\Gamma_0(p)$ of the matrix $\begin{psmallmatrix} u+bv & 2cv\\-2av & u-bv\end{psmallmatrix}$ in the corollary above corresponds exactly to one of the two oriented closed geodesic associated to the class group of $K$ (see Section \ref{sec:geodesic} for details). We also obtain a related result when the wide class number of $K$ is not $1$ and for general prime levels $p$ which is however a bit more cumbersome to state. We will refer to Section \ref{sec:applproofs} for details.

The next applications is concerned with the non-vanishing of integrals of modular forms over closed geodesics. Let $\sigma_1(n)=\sum_{d| n} d$ be the sum of divisors function. For a Hecke eigenform $f\in \mathcal{M}_2(p)$ with Fourier coefficients $a_f(n)$ (at $\infty$), we have the trivial bound $|a_f(n)|\ll \sigma_1(n)$. This means that for any non-zero modular form $f\in \mathcal{M}_2(p)$ we can define;
$$M_{f}:=\inf \{c\geq 0: |a_f(n)|\leq c \sigma_1(n) ,\, \forall n\geq 1\}\in (0,\infty), $$  
where again $a_f(n)$ denotes the Fourier coefficients of $f$ at $\infty$.
\begin{cor}\label{cor:modular}
Let $p$ be prime and let $f\in \mathcal{M}_2(p)$ be a holomorphic modular form of weight $2$ and level $p$ with constant Fourier coefficient equal to $1$.
Consider a real quadratic field $K$ of discriminant $d_K $ and (wide) class number one such that $p$ splits in $K$ with $p\mathcal{O}_K=\mathfrak{p}_1\mathfrak{p}_2$ such that $\mathfrak{p}_1$ does not have a generator of positive norm. Let $C$ denote the geodesic associated to the class group of $K$. Then we have for $d_K\gg_\eps (M_{f})^{12+\eps} p^{48+\eps}$ that
$$\int_{C} f(z)dz\neq 0 .$$ 
Notice that the above  does not depend on the choice of orientation of $C$.
\end{cor}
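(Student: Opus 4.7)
The plan is to deduce the non-vanishing from Theorem~\ref{thm:main} applied to the single class $[C]=[\mathcal{C}_1(p)]$. First I check the hypotheses match. The assumption of (wide) class number one combined with $\mathfrak{p}_1$ having no positive-norm generator forces $|\Cl_K^+|=2$ with non-trivial class $[(\sqrt{d_K})]$ (in particular there is no unit of norm $-1$), so $(\Cl_K^+)^2=\{[\mathcal{O}_K]\}$ and $\mathfrak{p}_1\notin (\Cl_K^+)^2$. Thus the sum over $(\Cl_K^+)^2$ appearing in Theorem~\ref{thm:main} reduces to the single term $[C]$.

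Next I fix a basic basis $B=\{v_{E}(p)\}\cup\{v_\sigma\}_{\sigma\in\mathcal{S}(p)\setminus\{T\}}$ of $V_p$ containing $v_{E}(p)$ and write $[C]=\alpha_0 v_{E}(p)+\sum_\sigma \alpha_\sigma v_\sigma$. Theorem~\ref{thm:main} provides $\alpha_0<0$, $|\alpha_0|=\|[C]\|_{B,\infty}$ and $|\alpha_\sigma|\ll_\eps \|[C]\|_{B,\infty}\, p^{2+\eps} d_K^{-1/12+\eps}$ for $\sigma\neq T$. By linearity of the integration pairing,
\[
\int_C f(z)\,dz \;=\; \alpha_0\int_i^{i+1} f\,dz \;+\; \sum_{\sigma\neq T}\alpha_\sigma \int_i^{\sigma i} f\,dz.
\]
The Fourier expansion at $\infty$ gives $\int_i^{i+1}f\,dz=a_f(0)=1$ by the normalization of $f$, so the main term is $\alpha_0\approx -\|[C]\|_{B,\infty}$.

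The crux is a uniform bound $|\int_i^{\sigma i} f\,dz|\ll_\eps M_f\, p^{1+\eps}$ for each $\sigma\in\mathcal{S}(p)\setminus\{T\}$. Since $\mathrm{Im}(\sigma i)=1/(p^2+(a^*)^2)\asymp 1/p^2$, and $|a_f(n)|\leq M_f\sigma_1(n)$ yields $|f(z)|\ll 1+M_f/y^2$, naive application of this estimate on the whole geodesic only gives $O(M_f p^{2+\eps})$. To save a factor of $p$, split the geodesic at $z_0=\sigma\infty+i/p=a/p+i/p$, which a direct check shows lies on the hyperbolic geodesic from $i$ to $\sigma i$. On the upper half (from $i$ to $z_0$) the imaginary part lies in $[1/p,1]$, so integrating $|f(z)||dz|$ and using $\int_{1/p}^1 dy/y^2\asymp p$ yields $O(M_f p)$. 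For the lower half (from $z_0$ to $\sigma i$) apply the modular change of variables $w=\sigma^{-1}z$ (using $f(\sigma w)\,d(\sigma w)=f(w)\,dw$): one computes $\sigma^{-1}z_0=a^*/p+i/p$ and $\sigma^{-1}\sigma i=i$, so the transformed path again has $\mathrm{Im}(w)\geq 1/p$, and the same estimate gives $O(M_f p)$.

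Combining and using that $B$ has $2g=O(p)$ non-Eisenstein elements,
\[
\Bigl|\sum_{\sigma\neq T}\alpha_\sigma \int_i^{\sigma i} f\,dz\Bigr| \;\ll_\eps\; \|[C]\|_{B,\infty}\cdot M_f\cdot p^{4+\eps}\, d_K^{-1/12+\eps},
\]
which is smaller than $\tfrac{1}{2}\|[C]\|_{B,\infty}$ precisely when $d_K\gg_\eps M_f^{12+\eps}\, p^{48+\eps}$. In that range the main term dominates, yielding $\int_C f\,dz\neq 0$. The main obstacle is the sharp cycle-integral bound $\ll_\eps M_f p^{1+\eps}$ established in the previous paragraph; everything else is linear algebra and bookkeeping.
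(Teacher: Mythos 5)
Your proof is correct and follows the same overall architecture as the paper's proof of Corollary \ref{cor:modular2}: the class-number bookkeeping reduces the sum over $(\Cl_K^+)^2$ to the single class of $C$, one expands $[C]$ in a basic basis containing $v_{E}(p)$, Theorem \ref{thm:main} controls the coordinates, the main term is $\alpha_0\cdot a_f(0)=\alpha_0$, and it is beaten against $O(p)$ error terms, each a product of a coordinate bound $\ll_\eps \|[C]\|_{B,\infty}p^{2+\eps}d_K^{-1/12+\eps}$ and a period bound $\ll_\eps M_f p^{1+\eps}$, giving the same threshold $d_K\gg_\eps M_f^{12+\eps}p^{48+\eps}$. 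Where you genuinely diverge is in how the period bound $|\int_i^{\sigma i}f\,dz|\ll 1+M_fp$ is obtained. The paper moves the base point of the class $\{z,\sigma z\}$ to $z=-d/p+i/p$ so that the contour becomes a single horizontal segment at height $1/p$ of Euclidean length $|a+d|/p$, and then invokes the pointwise bound $|f(x+iy)|\ll_\eps\max(1,M_fy^{-1-\eps})$. Your route --- split the contour at $a/p+i/p$, fold the lower half back up by $\sigma^{-1}$ using weight-$2$ modularity, and integrate the pointwise bound along two near-vertical segments with $\Im z\in[1/p,1]$ --- is actually the more robust of the two: the correct pointwise estimate is $|f(x+iy)|\ll 1+M_fy^{-2}$ (since $\sum_{n\geq1}\sigma_1(n)e^{-2\pi ny}\asymp y^{-2}$, not $y^{-1-\eps}$ as the paper's display asserts), and with that correction the paper's horizontal segment, whose length $|a+d|/p$ can be of order $1$, would only yield $O(M_fp^2)$, whereas your $\int_{1/p}^{1}y^{-2}\,dy\asymp p$ legitimately recovers $O(M_fp)$. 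So your contour choice quietly repairs a small slip in the published argument while landing on the same exponents.

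Two cosmetic remarks. The assertion that $a/p+i/p$ lies on the hyperbolic geodesic from $i$ to $\sigma i$ is unnecessary (and not obviously true): $f(z)dz$ is a holomorphic $1$-form on the simply connected $\Hb$, so $\int_i^{\sigma i}f\,dz$ is path-independent and you may split at any intermediate point. Also, $\alpha_0<0$ and $|\alpha_0|=\|[C]\|_{B,\infty}$ are not literally ``provided'' by Theorem \ref{thm:main}; they follow from it only once $p^{2+\eps}d_K^{-1/12+\eps}<1/2$, which of course holds throughout your final range of $d_K$.
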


\subsection{Geometric interpretation}\label{sec:geoint}
We will now explain the geometric content of Theorem \ref{thm:std3}. Recall that topologically $Y_0(p)$ is a genus $g$ curve with two punctures where the genus satisfies $g=\frac{p}{12}+O(1)$. We are interested in understanding the sum of the homology classes of the oriented closed geodesics of the principal genus, say. The associated geodesics will travel around $Y_0(p)$ in a complicated way but our results can be interpreted as saying that 
$$\textit{{\lq\lq}closed geodesics from the principal genus winds around the cusp at infinity a lot{\rq\rq}}$$ 

To illustrate this, let us consider the simplest non-trivial case $p=11$ where the genus is one. A fundamental polygon for $Y_0(11)$ is given by the hyperbolic polygon with vertices $\infty, 0, \tfrac{1}{3},\tfrac{1}{2},\tfrac{2}{3},1$ as illustrated in Figure \ref{fig:1}. The associated side pairing transformations (forgetting inverses) are 
$$T=\begin{psmallmatrix} 1 & 1 \\ 0 & 1 \end{psmallmatrix},\, A=\begin{psmallmatrix} 3 & -2 \\ 11 & -7 \end{psmallmatrix},\, B=\begin{psmallmatrix}4 & -3 \\ 11 & -8 \end{psmallmatrix},$$
which define a set of free generators for $\Gamma_0(11)$ (this follows from Poincar\'{e}'s Theorem as explained in Section \ref{sec:dirichlet}). As illustrated in Figure \ref{fig:2}, when viewing $Y_0(11)$ as a double punctured torus, the matrix $T$ corresponds to a simple loop around the puncture at $\infty$ and $A,B$ correspond to the loops going around the two {\lq\lq}holes{\rq\rq} of the torus. The content of Theorem \ref{thm:std3} is concerned with the coordinates of the closed geodesics in the basis $\{T,A,B\}$ in the homology (or equivalently when the corresponding hyperbolic conjugacy classes are projected to the abelinization of $\Gamma_0(11)$). Now consider the quadratic field $K=\Q(\sqrt{23})$ which has narrow class number $2$ and wide class number $1$. In this case we have associated to the principal class $I\in \Cl_K^+$, the conjugacy class  of the following matrix
$$\gamma_I:=\begin{psmallmatrix}26 & -35 \\ 55 & -74 \end{psmallmatrix},$$ 
and one can check that we have 
$$ \gamma_I=BA^{-1}T^{-1} B^{-1}T^{-1},$$
in our basis (see Figure \ref{fig:1}). Observe that $Y_0(11)$ is homotopic to a wedge of three circles and we are counting the (oriented) number of times the geodesic goes around each of the three circles as illustrated in Figure \ref{fig:2}.  We already see in this numerically very small example a tendency towards large $T$-coordinate.   
\begin{figure}
\begin{tikzpicture}[scale=11]
    \begin{scope}
      \clip (-0.6,-0.1) rectangle (0.6,0.6);
              \draw[thick] (0.40909,0) circle(0.435984);
          
         \draw[thick](0.40909-1,0) circle(0.435984); 
          \draw[thick] (1.17252139-0.5-0.2179841352,0) circle(0.2179841352); 
          \draw[thick] (1.17252139-0.5-1-0.2179841352,0) circle(0.2179841352); 
          \draw[thick] (0.718838463-0.5-0.03353053045,0) circle(0.03353053045);   
         
          \fill[white](-0.083333,0) circle(0.08333333);
      \fill[white](0.0833333,0) circle(0.083333333);
         \fill[white] (0.33333333,0) circle(0.16666666);
         \fill[white] (-0.3333333,0) circle(0.166666666);
         \draw (-0.083333,0) circle(0.08333333);
      \draw (0.0833333,0) circle(0.083333333);
         \draw (0.33333333,0) circle(0.16666666);
         \draw  (-0.3333333,0) circle(0.166666666);
          
         
     \fill[white](-0.5,0) rectangle  (-0.6,0.5);
        \fill[white] (0.5,0) rectangle  (0.6,0.5);
         \fill[white] (-0.6,-0.3) rectangle  (0.6,0);
         
      \draw (-0.5,0) -- (-0.5,0.5);
        \draw  (0.5,0) -- (0.5,0.5);
        
        \node at (0,0) [below] {{\tiny $ \tfrac{1}{2}$}};
         \node at (0.166666,0) [below] {{\tiny $ \tfrac{2}{3}$}};
          \node at (-0.166666,0) [below] {{\tiny $ \tfrac{1}{3}$}};
           \node at (0.5,0) [below] {{\tiny $1$}};
           \node at (-0.5,0) [below] {{\tiny $0$}};
           \node at (-0.0833333,0.0833333) [below] {{\tiny $B$}};
           \node at (0.0833333,0.0833333) [below] {{\tiny $A^{-1}$}};
           \node at (-0.33333,0.16666) [below] {{\tiny $A$}};
           \node at (0.33333,0.16666) [below] {{\tiny $B^{-1}$}};
           \node at (-0.5,0.3) [left] {{\tiny $T^{-1}$}};
           \node at (0.5,0.3) [right] {{\tiny $T$}};
    
    \end{scope}
    
      \end{tikzpicture}
  \caption{The closed geodesic on $Y_0(11)$ associated to the principal class in $\Q(\sqrt{23})$}\label{fig:1}
\end{figure}
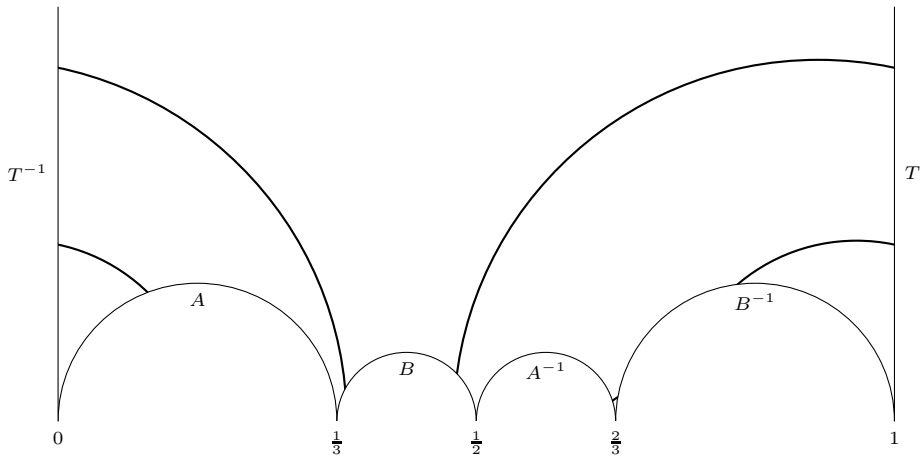
$\qquad$
\begin{figure}


\begin{tikzpicture}[yshift=1.5cm,scale=2] 
 
    \begin{scope}[yshift=11mm]\draw[yscale=cos(70), double distance=2*5mm] (0:1) arc (0:180:1);
    \draw[yscale=cos(70),  double distance=2*5mm] (180:1) arc (180:360:1);
     \fill[white](-12.5 mm,-0.07mm) rectangle  (-7 mm ,0.09mm);
     \fill[white](12.5 mm,-0.07mm) rectangle  (7 mm ,0.09mm);
 
    \draw [yscale=cos(70), densely dashed] circle(9mm);
    \draw [ xscale=cos(70), yshift=-3.4 mm, densely dashed]  circle(0.255);
    \draw [yscale=cos(55), dash pattern=on 1.5pt off 1.5pt, xshift=-11 mm] circle(0.1);
    \end{scope}
      \begin{scope}[yshift=-4mm]
     \node[xshift=-2*11 mm] at (0,15mm) {$\cdot$};
    \node[xshift=2*11 mm] at (0,15mm) {$\cdot$};
    \node[xshift=-2*10.5 mm, above] at (0,15.5mm) {{\tiny $T$}}; 
    \node[xshift=2*15 mm] at (0,15mm)  {$\simeq$}; 
    \node[yshift=1.33333*27 mm, above]  {{\tiny $A$}};
    \node[yshift=1.33333*16 mm, xshift=1.33333*0.6mm,  right]  {{\tiny $B$}}; 
   
    \end{scope}
    \node[xshift=1.5*15 mm]   {$$};  
    
   
  \end{tikzpicture}
  \begin{tikzpicture}[scale=0.8]
   \begin{polaraxis}[grid=none, axis lines=none]
     \addplot[mark=none,domain=0:360,samples=300] {cos(x*3)};
    \node at (0,0.8)  {$B$};
    \node at (120,0.8)  {$T$};
    \node at (240,0.8)  {$A$};
      \node at (340,0.8) [below] {{\color{black!60}$1-1=$}$\,0$};
      \node at (75,0.8) [below] {{\color{black!60}$0-2=$}$\,-2$};
      \node at (270,0.8) [below] {{\color{black!60}$0-1=$}$\,-1$};
   \end{polaraxis}
 \end{tikzpicture}
 
\caption{The $\{T,A,B\}$-coordinates in the homology of $Y_0(11)$ of the closed geodesic associated to the principal class of $\Q(\sqrt{23})$} \label{fig:2}
  \end{figure}
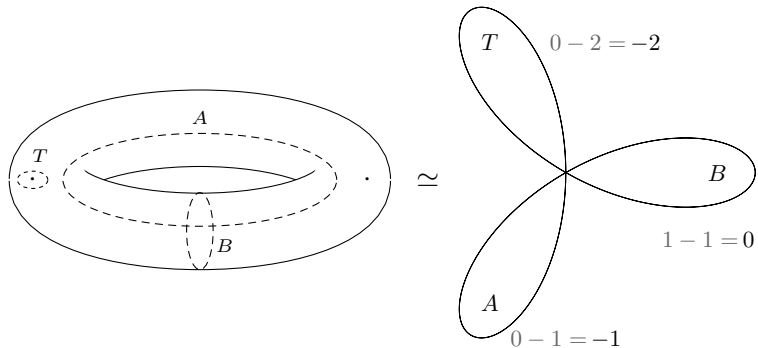
\begin{remark}\label{rem:risager}
Our results can be interpreted in the context of the classical problem of understanding the distribution of the projection 
\begin{equation}\label{eq:PhSa} \pi_1(M)\twoheadrightarrow \mathrm{Conj}(\pi_1(M))\twoheadrightarrow \pi_1(M)^\mathrm{ab}\cong H_1(M,\Z),\end{equation}
for a manifold $M$. Note that if $\Gamma_0(p)$ is torsion free then we have $\pi_1(Y_0(p))\cong \Gamma_0(p)$ and $H_1(Y_0(p), \Z)\cong \Gamma_0(p)^\mathrm{ab}$. For $M$ a compact Riemann surface there is a $1$-to-$1$ correspondence between conjugacy classes in $\pi_1(M)$ and oriented closed geodesics. 
Phillips and Sarnak \cite{PhillipsSarnak87} obtained an asymptotic expansion for the number of primitive geodesics of length $\leq X$ with specified image under the map (\ref{eq:PhSa}). In the same setup, Petridis and Risager \cite{PetridisRisager08} obtained an equidistribution statement for subsets $A\subset H_1(M,\Z)$ with asymptotic density. Also Petridis and Risager \cite{PetridisRisager05} showed that given a splitting $H_1(M,\Z)=\Z v \oplus V $ with $v\in H_1(M,\Z)$ then the $v$-coordinate of closed geodesics become normally distributed (when properly normalized) again ordered by the length of the geodesics. See also \cite{ConsNor20}, \cite{BurringEssen22}. 
From an arithmetic point of view the ordering by geodesic length is not very natural as it gives very large weight to discriminants with large class group (compared to the discriminant). Whereas the ordering by discriminant does not seem to admit any nice geometric description.
\end{remark}

\section*{Acknowledgements}
The author would like to thank Peter Humphries, Raphael Steiner, Daniel Kriz, and Farrell Brumley for useful discussions. The author would also like to thank the refer\'{e}e for a careful and insightful reading with many valuable suggestions as well as pointing out a mistake in a key step of the proof. The author's research was supported by the Independent Research Fund Denmark DFF-1025-00020B.

\section{Supersingular reduction of CM elliptic curves}\label{sec:CM} 
The result of Duke \cite{Du88} mentioned in the introduction regarding equidistribution of closed geodesics has an imaginary quadratic analogue, which amounts to the equidistribution of elliptic curves with complex multiplication inside the moduli space of elliptic curves over $\C$ (i.e. CM points on the modular curve, see again \cite{Du88}). Similarly, our results can be seen as a real quadratic analogue of the distribution of the supersingular reduction of elliptic curves with complex multiplication as investigated by many authors \cite{Michel04}, \cite{ElkiesOnoYang05}, \cite{Yang08}, \cite{Kane09}, \cite{LMY15}, \cite{AkaLuethiMichelWieser22}. This analogy between oriented closed geodesics in homology and supersingular reduction of CM elliptic curves is very natural and appears e.g in the recent work of Darmon--Harris--Rotger--Venkatesh \cite{DaHaRoVe21}.  

Let $\mathcal{E\ell\ell}^{ss}(\mathbb{F}_{p^2})=\{e_1,\ldots, e_n\}$ denote the set of isomorphism classes of supersingular elliptic curves defined over $\mathbb{F}_{p^2}$. It is known that $|\mathcal{E\ell\ell}^{ss}(\mathbb{F}_{p^2})|=\frac{p}{12}+O(1)$. We endow $\mathcal{E\ell\ell}^{ss}(\mathbb{F}_{p^2})$ with the measure defined by
\begin{equation}\label{eq:measure}\mu_p(\{e_i\})= w_i^{-1}/\sum_{j=1}^n w_j^{-1},\end{equation}
where $w_i$ denotes the size of the endomorphism group of the elliptic curves corresponding to $e_i$. Let $\mathcal{E\ell\ell}_K$ denote the set of isomorphism classes of elliptic curve (defined over $\overline{\Q}$) with complex multiplication by the ring of integers of the imaginary quadratic field $K$ with discriminant $d_K <0$ and class group $\Cl_K$. The set $\mathcal{E\ell\ell}_K$ carries a natural $\Cl_K$-action which is free and transitive. If $p$ is inert in $K$ we have a map 
$$r_\mathfrak{p}:\mathcal{E\ell\ell}_K\rightarrow \mathcal{E\ell\ell}^{ss}(\mathbb{F}_{p^2})$$ 
given by taking the mod $\mathfrak{p}$ reduction where $\mathfrak{p}$ is a prime ideal of the Hilbert class field of $K$ lying over $p$. The following is \cite[Theorem 3]{Michel04}. 

\begin{thm}[Michel] \label{thm:michel}Consider a CM elliptic curve $E\in \mathcal{E\ell\ell}_K$ and a subgroup $H\leq \Cl_K$ of index $\leq |d_K|^{1/2015}$. Then the orbits $H.E=\{r_\mathfrak{p}(A.E)\in  \mathcal{E\ell\ell}^{ss}(\mathbb{F}_{p^2}):A \in H \}$ become equidistributed as $d_K \rightarrow -\infty$ with respect to the measure $\mu_p$ given by (\ref{eq:measure}). \end{thm}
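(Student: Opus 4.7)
The plan is to follow the by-now classical \emph{Weyl--period--subconvexity} strategy. Since $\mathcal{E\ell\ell}^{ss}(\F_{p^2})$ is finite and $\mu_p$ is a probability measure, equidistribution of the orbit $H.E$ with respect to $\mu_p$ is equivalent (Weyl's criterion) to the vanishing
\[
\frac{1}{|H|}\sum_{A\in H} f(r_\mathfrak{p}(A.E))\longrightarrow 0 \qquad (d_K\to -\infty)
\]
for every test function $f:\mathcal{E\ell\ell}^{ss}(\F_{p^2})\to \C$ orthogonal to the constants in $L^2(\mu_p)$. The Eichler mass formula together with the Jacquet--Langlands correspondence identifies this orthogonal complement with the non-trivial part of the Brandt module of the definite quaternion algebra $B_{p,\infty}$ ramified at $p$ and $\infty$, and hence with the space of weight $2$ holomorphic newforms of level $p$; thus it suffices to verify the Weyl bound on a basis $\{f\}$ indexed by Hecke eigenforms $g$.

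Next, I would decompose the indicator of $H$ via Pontryagin duality: writing
\[
\mathbf{1}_H(A)=\frac{1}{[\Cl_K:H]}\sum_{\chi\in (\Cl_K/H)^{\vee}}\chi(A),
\]
the Weyl sum factors through twisted orbit sums over the full class group,
\[
\frac{1}{|H|}\sum_{A\in H}f(r_\mathfrak{p}(A.E))=\frac{1}{|\Cl_K|}\sum_{\chi\in (\Cl_K/H)^{\vee}}W_\chi, \qquad W_\chi:=\sum_{A\in \Cl_K}\chi(A)\,f(r_\mathfrak{p}(A.E)),
\]
reducing the problem to a uniform estimate for the twisted toric periods $W_\chi$. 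The contribution of the trivial character $\chi=1$ is controlled by Duke's theorem (or its Brandt-module analogue of Iwaniec); the genuinely new input is required for $\chi\neq 1$.

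The heart of the argument is the link with central $L$-values. By the Waldspurger--Gross formula in the definite setting, $|W_\chi|^2$ equals, up to explicit nonzero archimedean and local factors, a product of central values
\[
|W_\chi|^2 \;\asymp\; \frac{L(1/2,g\otimes\chi)\,L(1/2,g\otimes\chi\eta_K)}{L(1,\eta_K)},
\]
where $g$ is the Jacquet--Langlands transfer of the Brandt eigenfunction attached to $f$ and $\eta_K$ is the quadratic character of $K/\Q$. A subconvex bound in the discriminant aspect of the shape
\[
L(1/2,g\otimes\psi)\ll_{p,\eps} |d_K|^{1/2-\delta+\eps}
\]
uniform in the unramified class-group character $\psi$, combined with Siegel's lower bound $|\Cl_K|\gg_\eps |d_K|^{1/2-\eps}$, yields $|W_\chi|\ll_{p,\eps} |d_K|^{1/2-\delta+\eps}$ uniformly in $\chi$.

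Summing over the at most $[\Cl_K:H]\leq |d_K|^{1/2015}$ characters and dividing by $|\Cl_K|$ produces
\[
\frac{1}{|\Cl_K|}\sum_\chi |W_\chi| \;\ll_{p,\eps}\; |d_K|^{-\delta+1/2015+\eps},
\]
which tends to $0$ as soon as $\delta>1/2015$; the index threshold $|d_K|^{1/2015}$ in the hypothesis is chosen precisely so that the subconvexity saving available absorbs the loss from the size of $(\Cl_K/H)^{\vee}$. The main obstacle is therefore establishing a strong enough subconvex bound, \emph{uniform in the class-group character} $\chi$ and with an explicit exponent strictly exceeding $1/2015$; this is the analytic core, carried out by Michel via amplification in the spirit of Duke--Friedlander--Iwaniec, and the only ingredient that does not reduce to standard automorphic bookkeeping (the Weyl criterion, the Brandt/Jacquet--Langlands dictionary, Pontryagin decomposition over $\Cl_K$, and the Gross period formula).
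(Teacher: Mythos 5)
This theorem is imported verbatim from Michel's paper (\cite[Theorem 3]{Michel04}); the present paper offers no proof, only a one-paragraph sketch — identification of $\mathcal{E\ell\ell}^{ss}(\mathbb{F}_{p^2})$ with the components of the conic curve attached to the quaternion algebra ramified at $p$ and $\infty$, Jacquet--Langlands, Gross's period formula, and subconvexity for Rankin--Selberg $L$-functions — and your outline coincides with that sketch step for step. One caveat: your displayed period formula factors the relevant $L$-value as $L(1/2,g\otimes\chi)L(1/2,g\otimes\chi\eta_K)$, which is only valid for genus (quadratic) characters $\chi$; for a general class-group character the Gross formula produces the genuine $\GL_2\times\GL_2$ central value $L(1/2,g\times\theta_\chi)$, which does not split into $\GL_2\times\GL_1$ twists, and it is precisely subconvexity for this Rankin--Selberg convolution in the $d_K$-aspect (Michel's Theorem 2) that constitutes the hard analytic core you correctly identify.
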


The proof goes through an identification of $\mathcal{E\ell\ell}^{ss}(\mathbb{F}_{p^2})$ with the set of connected components of a certain conic curve; 
\begin{equation}\label{eq:conic} X^{p,\infty}:=\mathbf{PB}^\times(\Q) \backslash \mathbf{PB}^\times(\A_\Q) /\mathbf{PB}^\times(\widehat{\Z}) K_\infty,\end{equation}
where $\mathbf{B}$ denotes the unique quaternion algebra over $\Q$ ramified at $p$ and $\infty$, $\mathbf{PB}^\times$ its group of projective units, $K_\infty$ denotes a maximal compact torus of $\mathbf{PB}^\times(\R)$ and $\mathbf{PB}^\times(\widehat{\Z})$ denotes the projective units of $\widehat{\Z}\otimes \mathcal{O}$ where $\mathcal{O}\subset \mathbf{B}$ is a maximal order. Using the Jacquet--Langlands correspondence and a formula of Gross, this reduces the distribution problem to subconvexity bounds of certain Rankin--Selberg $L$-functions which is resolved (see \cite[Section 5]{Michel04} for details). 

In order to set up the analogy with the real quadratic case, we will slightly reformulate the statement in Theorem \ref{thm:michel} above. Denote by $H_0(X^{p,\infty},\Z)$ the $0$-th (singular) homology group of $X^{p,\infty}$ with integral coefficients (one should picture a copy of $\Z$ at each connected component of $X^{p,\infty}$), which is a lattice inside the real homology group $H_0(X^{p,\infty},\R)$. Note that both of these abelian groups carry a natural action of the Hecke algebra coming from the description in terms of quaternion algebras (see e.g.\cite[Section 1.5]{BertoDarmon96}), and as such is isomorphic to the space of modular forms $\mathcal{M}_2(p)$ of level $p$ and weight $2$. We have a natural basis of $H_0(X^{p,\infty},\Z)$ of geometric nature corresponding to the classes $e_1,\ldots, e_n$ (using suggestive notation) associated to each connected component of $X^{p,\infty}$. In this basis the Eisenstein element is the following
$$ e_0:= \sum_{i=1}^n w_i^{-1}e_i, $$
meaning that $T_\ell e_0=(\ell+1)e_0$ for $\ell\neq p$ prime and $T_\ell$ the $\ell$-th Hecke operator. Using the above identifications with supersingular elliptic curves and after fixing an elliptic curve $E\in \mathcal{E\ell\ell}_K$, we get a map
\begin{equation}\label{eq:mapssEC}r_p: \Cl_K\rightarrow \{e_1,\ldots, e_n\}\subset H_0(X^{p,\infty},\Z)\subset H_0(X^{p,\infty},\R),\quad A\mapsto r_\mathfrak{p}(A.E),\end{equation}
which will serve as an imaginary quadratic analogue of the map (\ref{eq:maphom}). We will now consider Theorem \ref{thm:michel} as a statement about convergence (with respect to the standard topology) on the $(n-1)$-sphere which we identity with $\mathbf{S}(V_{p,\infty}):=(V_{p,\infty}-\{0\})/\R_{>0}$ equipped with the quotient toplogy where $V_{p,\infty}:=H_0(X^{p,\infty},\R)\cong \R^{n}$ (equipped with the Euclidean topology). As above for $v\in V_{p,\infty}-\{0\}$ we denote by $\overline{v}\in \mathbf{S}(V_{p,\infty})$ the image under the natural projection $V_{p,\infty}-\{0\}\twoheadrightarrow \mathbf{S}(V_{p,\infty})$. We can then recast the equidistribution statement of Michel as follows. 


\begin{thm}[{\lq\lq}Vector space{\rq\rq}-version of Theorem \ref{thm:michel}]\label{thm:michel2} Let $p>2$ be prime and let $K$ be an imaginary quadratic field of discriminant $d_K <0$ such that $p$ is inert in $K$. Consider a subgroup $H\leq \Cl_K$ of index $\leq |d_K|^{1/2015}$ and a coset $CH\subset \Cl_K$. Then we have as $d_K \rightarrow -\infty$ that  
\begin{equation}\label{eq:weakconvm}\overline{\sum_{A \in CH} r_p(A)} \longrightarrow \overline{e_0}, \end{equation}
in the standard topology of $\mathbf{S}(V_{p,\infty})$.
\end{thm}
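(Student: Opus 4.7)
The plan is to deduce Theorem \ref{thm:michel2} directly from Theorem \ref{thm:michel} by translating the weak-convergence statement on the finite set $\mathcal{E\ell\ell}^{ss}(\mathbb{F}_{p^2})$ into convergence of vectors in the finite-dimensional space $V_{p,\infty}$. First I would expand the vector of interest in the geometric basis $\{e_1,\ldots,e_n\}$: by construction the $e_i$-coordinate of $\sum_{A\in CH} r_p(A)$ is the multiplicity
$$c_i(CH):=\#\{A\in CH:r_p(A)=e_i\}.$$
To handle the coset (rather than subgroup) nature of $CH$, I would observe that $r_p(CA')=r_{\mathfrak{p}}(A'.(C.E))$ with $C.E\in\mathcal{E\ell\ell}_K$, so Theorem \ref{thm:michel} applied with the CM elliptic curve $C.E$ in place of $E$ gives equidistribution of $\{r_{\mathfrak{p}}(A'.(C.E)):A'\in H\}$ with respect to $\mu_p$; the hypothesis on $[\Cl_K:H]$ is unaffected by the change of base point.

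Because $\mathcal{E\ell\ell}^{ss}(\mathbb{F}_{p^2})$ is a finite set, equidistribution with respect to $\mu_p$ is equivalent to pointwise convergence of mass functions. Testing against each indicator function $\mathbf{1}_{\{e_i\}}$ therefore yields, for every $1\leq i\leq n$,
$$\frac{c_i(CH)}{|CH|}\longrightarrow \mu_p(\{e_i\})=\frac{w_i^{-1}}{\sum_{j=1}^n w_j^{-1}}\quad\text{as }d_K\to -\infty.$$
Since $V_{p,\infty}$ is finite-dimensional, coordinate-wise convergence implies convergence in any norm, so
$$\frac{1}{|CH|}\sum_{A\in CH} r_p(A)\longrightarrow \frac{1}{\sum_j w_j^{-1}}\sum_{i=1}^n w_i^{-1}\,e_i=\frac{e_0}{\sum_j w_j^{-1}}$$
in the Euclidean topology of $V_{p,\infty}$.

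The limit is a strictly positive scalar multiple of $e_0$, in particular nonzero, and the projection $V_{p,\infty}\setminus\{0\}\twoheadrightarrow \mathbf{S}(V_{p,\infty})$ is continuous and invariant under multiplication by positive scalars. Absorbing both $1/|CH|$ and $1/\sum_j w_j^{-1}$ into the projection yields $\overline{\sum_{A\in CH} r_p(A)}\to\overline{e_0}$ in $\mathbf{S}(V_{p,\infty})$, as required. There is no genuine analytic obstacle here: Theorem \ref{thm:michel2} is a bookkeeping reformulation of Theorem \ref{thm:michel} in the language of the vector space $V_{p,\infty}$, and all the arithmetic content (subconvexity for the relevant Rankin--Selberg $L$-functions via Jacquet--Langlands and Gross's formula) is imported as a black box through Michel's theorem. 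The only point that demands a brief comment is the coset-to-subgroup reduction sketched above.
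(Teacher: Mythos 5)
Your proposal is correct and follows essentially the same route as the paper, which treats Theorem \ref{thm:michel2} as a bookkeeping reformulation of Michel's theorem: finite-set equidistribution with respect to $\mu_p$ is read off coordinate-by-coordinate in the basis $\{e_1,\ldots,e_n\}$, and convergence on $\mathbf{S}(V_{p,\infty})$ is the normalized coordinate convergence. The only (minor) difference is that you make explicit the coset-to-subgroup reduction via the base-point change $E\mapsto C.E$, which the paper leaves implicit in the statement of Michel's theorem for arbitrary $E\in\mathcal{E\ell\ell}_K$.
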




We will now show that this is equivalent to Theorem \ref{thm:michel}. Let $B=\{e_1,\ldots, e_n\}$ be the standard basis for $V_{p,\infty}$ which defines an isomorphism $V_{p,\infty}\cong \R^n$. As explained above, we get a norm $|\!|\cdot |\!|_{B,1}$ on $V_{p,\infty}$ by pulling back the $L^1$-norm in the standard basis of $\R^n$. Now recall that the convergence on the $(n-1)$-sphere $\mathbf{S}(V_{p,\infty})$ is equivalent to the convergence statement (\ref{eq:convergenceformulation}) using e.g. the norm $|\!|\cdot |\!|_{B,1}$.  Notice that we have
$$ |\!|\sum_{A \in CH} r_p(A) |\!|_{B,1}=|H|,\quad   |\!|e_0 |\!|_{B,1}=\sum_{i=1}^n w_i^{-1},$$
recalling that the $e_i$-coordinate of $e_0$ is equal to $w_i^{-1}$. Now the convergence in (\ref{eq:convergenceformulation}) implies that the $e_i$-coordinates of $\sum_{A \in CH} r_p(A)$ converge to those of $e_0$ (both normalized). This recovers the statement of Michel as in Theorem \ref{thm:michel} up to the fact that Theorem \ref{thm:michel2} does not see that $r_p(A)$ is always equal to one of the vectors $e_1,\ldots, e_n$.


\subsubsection{The case of varying level} It is natural to ask what happens if we let $p$ vary with $d_K $ as has been considered by Liu--Masri--Young \cite{LMY15}. In the above terminology they consider the convergence with respect to the basis corresponding to the connected components $\{e_1,\ldots, e_n\}$ of $X^{p,\infty}$. Then \cite[Theorem 1.1]{LMY15} amounts to the following;
\begin{equation}\label{eq:weakmichel2} 
\left|\! \left|\frac{\sum_{A \in \Cl_K} r_p(A)}{|\!|\sum_{A \in \Cl_K}  r_p(A)|\!|_{B,1}}-\frac{e_0}{|\!|e_0|\!|_{B,1}}\right| \!\right|_{B,\infty} \ll_\eps p^{1/8+\eps} |d_K|^{-1/16+\eps}, \end{equation}
where again $|\!| \cdot |\!|_{B,1}$ and $|\!| \cdot |\!|_{B,\infty}$ denote the pullback of, respectively the $L^1$-norm and sup norm under the isomorphism $V_{p,\infty}\cong \R^n$ defined by  $B=\{e_1,\ldots, e_n\}$. We note that the statement (\ref{eq:weakmichel2}) is exactly parallel to Theorem \ref{thm:main}.
Translating back to the language of elliptic curves, (\ref{eq:weakmichel2}) implies the following analogue of Linnik's Theorem on the smallest prime in arithmetic progressions.
\begin{thm}[Liu--Masri--Young]\label{thm:LMYcor}
The reduction map $r_\mathfrak{q} : \mathcal{E\ell\ell}_K \rightarrow \mathcal{E\ell\ell}^{ss}_{p^2}$
is surjective for $|d_K| \gg_\eps p^{18+\eps}$.
\end{thm}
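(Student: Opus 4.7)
The plan is to derive Theorem \ref{thm:LMYcor} directly from the quantitative equidistribution estimate (\ref{eq:weakmichel2}) as a black box. The key observation is that surjectivity of $r_\mathfrak{q}$ is equivalent to every $e_i$-coordinate of the vector $\sum_{A \in \Cl_K} r_p(A) \in H_0(X^{p,\infty},\Z)$ being strictly positive: indeed, each summand $r_p(A)$ is itself one of the basis vectors $e_j$, so the $i$-th coordinate of the sum is exactly the number of $A \in \Cl_K$ with $r_\mathfrak{p}(A.E) \cong e_i$.

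First I would record a lower bound on the smallest coordinate of the normalized Eisenstein vector. Since $e_0 = \sum_i w_i^{-1} e_i$ with $w_i$ a divisor of $24$ (so $w_i^{-1} \geq 1/12$), and Eichler's mass formula gives $\sum_j w_j^{-1} = (p-1)/12$, one obtains
$$\min_{1\leq i \leq n} \frac{(e_0)_i}{|\!|e_0|\!|_{B,1}} \;=\; \frac{\min_i w_i^{-1}}{\sum_j w_j^{-1}} \;\gg\; \frac{1}{p}.$$

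Next I would apply (\ref{eq:weakmichel2}) coordinate-by-coordinate: for every $i$,
$$\left|\frac{(\sum_A r_p(A))_i}{|\!|\sum_A r_p(A)|\!|_{B,1}} \;-\; \frac{w_i^{-1}}{\sum_j w_j^{-1}}\right| \;\ll_\eps\; p^{1/8+\eps}\,|d_K|^{-1/16+\eps}.$$
Combining these two estimates, the normalized $i$-th coordinate of $\sum_A r_p(A)$ is bounded below by
$$\frac{c}{p} \;-\; O_\eps\!\bigl(p^{1/8+\eps}\,|d_K|^{-1/16+\eps}\bigr),$$
which is strictly positive as soon as $p^{1/8+\eps}|d_K|^{-1/16+\eps} \ll 1/p$, i.e.\ $|d_K|^{1/16-\eps} \gg p^{9/8+\eps}$, which rearranges exactly to $|d_K| \gg_\eps p^{16(1+1/8)+\eps} = p^{18+\eps}$. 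Hence under this hypothesis every coordinate of $\sum_A r_p(A)$ is nonzero, proving surjectivity.

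The genuine content is packed into the input (\ref{eq:weakmichel2}), whose proof rests on Jacquet--Langlands, Gross's formula, and subconvexity for Rankin--Selberg $L$-functions; past that, there is no real obstacle, only bookkeeping. The only thing to watch is that the exponent $18$ comes out on the nose: the smallest normalized coordinate of the Eisenstein class is $\asymp 1/p$ by the mass formula, and the exponent $1/16$ on $|d_K|$ in (\ref{eq:weakmichel2}) is precisely what is needed to beat this $1/p$ after accounting for the $p^{1/8}$ loss in the level aspect, giving $16\cdot(1 + 1/8) = 18$.
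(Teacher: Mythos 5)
Your derivation is correct and is exactly the route the paper intends: the paper states that (\ref{eq:weakmichel2}) implies Theorem \ref{thm:LMYcor} without writing out the details, and your coordinate-wise comparison against the lower bound $\min_i w_i^{-1}/\sum_j w_j^{-1}\gg 1/p$ from the Eichler mass formula is precisely that implication, with the exponent $16\cdot(1+1/8)=18$ coming out as it should. (Minor slip: if $w_i\mid 24$ then $w_i^{-1}\geq 1/24$, not $1/12$, but this only affects the implied constant.)
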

We think of Corollary \ref{cor:group} (and more generally Corollary \ref{cor:groupgen} below) as a real quadratic analogue of the above.

\section{Arithmetic background}
In this section we will introduce some basic facts about respectively, oriented closed geodesics associated to class groups of real quadratic fields and (co)homology of modular curves. 
 \subsection{Real quadratic fields and closed geodesics}\label{sec:geodesic}
We will refer to \cite{Popa06} and \cite{Darmon94} for an in-depth account of the following material. Let $K$ be a real quadratic extension of $\Q$ of discriminant $d_K >0$. Let $p$ be a prime which splits in $K$ and fix throughout a residue $r \modulo 2p$ such that $r^2\equiv d_K\modulo 4p$. Then we have the following equality of ideals
$$ p\mathcal{O}_K=  \left[p,\frac{r-\sqrt{d_K}}{2}\right]\left[p,\frac{r+\sqrt{d_K}}{2}\right],$$
where we use the following notation for $\alpha,\beta\in K$;
\begin{equation}\label{eq:alphabeta}[\alpha,\beta]:=\Z\alpha+\Z\beta \subset K.\end{equation}
In other words $[p,\frac{r\pm \sqrt{d_K}}{2}]$ are the two prime ideals of $\mathcal{O}_K$ lying over $p$.  

We let $\Cl_K^+$ denote the narrow class group of $K$ (i.e. fractional ideals modulo principal ideals generated by elements of positive norms). Below when the discriminant $d_K $ is clear from context, we let $I\in \Cl_K^+$ denote the class containing the principal fractional ideal $(1)=\mathcal{O}_K\subset K $ and $J\in \Cl_K^+$ denote the class containing the different $(\sqrt{d_K})=\sqrt{d_K}\mathcal{O}_K\subset K $. Observe that for fundamental discriminants divisible by a prime $q\equiv 3\modulo 4$ there exists no unit with norm $-1$ (as $-1$ is not a quadratic residue modulo $q$) which implies that $J\neq I$. In this case, a principal ideal belongs to the class $J$ exactly if it has a generator with negative norm.

We will now show the following result, which is closely related to the conditions appearing in our main results as we will see below. 

\begin{prop}\label{prop:H}
Let $p$ be an odd prime. Then there exists infinitely many positive fundamental discriminants $d$ such that $J\neq I$ and $[p,\frac{r-\sqrt{d}}{2}]\in J$ inside $\Cl_K^+ $ with $K=\Q(\sqrt{d})$.
\end{prop}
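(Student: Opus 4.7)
The plan is to exhibit an explicit infinite family of discriminants of the form $d = t^2 + 4p$, parametrized by odd positive integers $t$ with $p \nmid t$, for which $d$ is squarefree and admits a prime divisor $q \equiv 3 \pmod 4$.

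Granted such a $t$, the algebraic consequences follow essentially immediately. Since $t$ and $p$ are both odd, $d \equiv 5 \pmod 8$, so $d \equiv 1 \pmod 4$; combined with squarefreeness, this makes $d$ a positive fundamental discriminant with $\mathcal{O}_K = \Z[(1+\sqrt d)/2]$, where $K = \Q(\sqrt d)$. The element $\alpha := (t+\sqrt d)/2$ and its Galois conjugate $\overline{\alpha} = (t-\sqrt d)/2$ both lie in $\mathcal{O}_K$ with norm $(t^2-d)/4 = -p$; and since $d \equiv t^2 \not\equiv 0 \pmod p$ is a nonzero quadratic residue, $p$ splits as $\mathfrak{p}_1\mathfrak{p}_2$, and the factorization $(\alpha)(\overline{\alpha}) = (p)$ forces $\{(\alpha),(\overline{\alpha})\} = \{\mathfrak{p}_1,\mathfrak{p}_2\}$. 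Hence for any admissible $r$, the ideal $[p,(r-\sqrt d)/2]$ is principal with some generator of norm $-p$.

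Next I arrange $J \neq I$. A prime $q \equiv 3 \pmod 4$ dividing $d$ ramifies in $K$; writing $\mathfrak{q}$ for the prime above $q$, one has $\sqrt d \in \mathfrak{q}$ (from $(d) \subset \mathfrak{q}^2$), hence $\beta \equiv \overline{\beta} \pmod{\mathfrak{q}}$ for every $\beta \in \mathcal{O}_K$. A hypothetical unit $\epsilon$ with $N(\epsilon) = -1$ would then satisfy $\epsilon^2 \equiv -1 \pmod q$, contradicting the fact that $-1$ is a non-residue modulo $q$. Hence the fundamental unit of $K$ has norm $+1$ and $J \neq I$, and a principal ideal belongs to $J$ precisely when it admits a generator of negative norm. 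Combining with the previous paragraph, $\mathfrak{p}_1,\mathfrak{p}_2 \in J$.

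The remaining task -- exhibiting infinitely many such $t$ -- is where I expect the only real work. First I would fix a prime $q \equiv 3 \pmod 4$, $q \neq p$, for which $-p$ is a quadratic residue modulo $q$; existence of infinitely many such $q$ follows from quadratic reciprocity combined with Dirichlet's theorem on primes in arithmetic progressions (one imposes $q \equiv 3 \pmod 4$ together with a suitable residue class modulo $p$). Fixing a solution $s^2 \equiv -4p \pmod q$, I restrict $t$ to the arithmetic progression $t \equiv s \pmod q$, $t$ odd, $t \not\equiv 0 \pmod p$, which already guarantees $q \mid d$ and $p \nmid d$. A standard squarefree sieve applied to the quadratic polynomial $f(t) = t^2 + 4p$ then shows that a positive proportion of such $t$ yield $d$ squarefree, since for every prime $\ell$ the congruence $\ell^2 \mid f(t)$ has at most two solutions modulo $\ell^2$ and hence no local obstruction (at $\ell = q$ one simply imposes the mild additional condition $q^2 \nmid d$ on top of $t \equiv s \pmod q$, removing only a $1/q$-proportion). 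The main subtlety will be verifying that this sieve has no local obstructions beyond the imposed congruences.
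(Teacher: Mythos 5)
Your proposal is correct, and its arithmetic core coincides with the paper's: both arguments fix a prime $q\equiv 3\pmod 4$ modulo which $-p$ is a square, force $q\mid d$ so that ramification at $q$ kills any unit of norm $-1$ (whence $J\neq I$ and ``$\in J$'' $=$ ``has a generator of negative norm''), and then exhibit the primes above $p$ as principal ideals generated by an element of norm $-p$ coming from the factorization of $-p$ (equivalently $-4p$) by the norm form. The genuine difference is how fundamentality of $d$ is secured. You take $d=t^2+4p$ on the nose and demand it be squarefree, which costs you a squarefree sieve for the quadratic polynomial $t^2+4p$ (classical, due to Estermann, and you correctly note the only issues are the local conditions at $2$, $p$ and $q$, which you handle). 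The paper instead takes $n^2+p$ and simply \emph{extracts} its fundamental-discriminant part, writing $n^2+p=dm^2$ with $d$ fundamental; the congruence conditions modulo $16$ guarantee this decomposition exists, and the condition $q\,\|\,(n^2+p)$ guarantees $q$ survives into $d$. This sidesteps the sieve entirely and keeps the argument completely elementary and effective. Your version buys something in return: since $t\mapsto t^2+4p$ is injective on positive $t$, the infinitude of distinct discriminants is immediate, whereas with the paper's decomposition $n^2+p=dm^2$ one must still rule out that boundedly many $d$ account for all $n$ (a short extra argument via finiteness of orbits of solutions to $n^2-dm^2=-p$ under the unit group, which the paper leaves implicit). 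Both routes are valid; yours is slightly heavier analytically and slightly cleaner at the final counting step.
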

\begin{proof}Pick an odd prime $q\equiv 3\modulo 4$ such that $-p$ is a quadratic residue mod $q$. Then by simple considerations about quadratic residues there is a residue $t\modulo 8 q$ such that 
$$ q| t^2+p,\, \, q^2\!\!\not| t^2+p \quad \text{and}\quad (t^2+p \modulo 16)\in \{ 1, 5, 8,9,12,13\}, $$
meaning in particular that $t^2+p\equiv 0\text{ or }1\modulo 4$. Now for each $n\equiv t \modulo 8q$ there is a unique positive fundamental discriminant $d >0$  and integer $m>0$ such that $d m^2=n^2+p$. By construction we have $q| d$ and thus  $J\neq I$ inside $\Cl_K^+$ where $K=\Q(\sqrt{d})$. Furthermore, we have the factorization 
$$-p=(n-\sqrt{d}m)(n+\sqrt{d}m),$$ 
which means that $[p,\frac{r-\sqrt{d}}{2}]$ is a principal ideal generated by $ n\pm \sqrt{d}m$ (for some choice of sign $\pm$). By construction the norm of $n\pm \sqrt{d}m$ is negative (and equal to $-p$). By the above this implies $[p,\frac{r-\sqrt{d}}{2}]\in J$ in $\Cl_K^+$ as wanted.
\end{proof} 
Now it follows that if $J\neq I$ then $J\notin (\Cl_K^+)^2$. 
Thus for $p,d$ as in Proposition \ref{prop:H} and each subgroup $H\leq (\Cl_K^+)^2$, we have $[p,\frac{r-\sqrt{d}}{2}]\notin H$. This gives plenty of examples for which Theorems \ref{thm:std3} and \ref{thm:main} apply, recalling that $\{\mathfrak{p}_1, \mathfrak{p}_2\}=\{[p,\frac{r\pm\sqrt{d}}{2}]\} $.   

\subsubsection{Closed geodesics and class groups}Let $p$ be prime and $K$ a real quadratic field of discriminant $d_K$ such that $p$ splits in $K$. Let $r\modulo 2p$ as above be such that $r^2\equiv d_K\modulo 4p$. Denote by $\mathcal{Q}_{K,p}$ (suppressing $r$ in the notation) the set of all integral binary quadratic form 
$$Q(x,y)=ax^2+bxy+cy^2$$
of discriminant $b^2-4ac=d_K$ and level $p$ meaning that $a\equiv 0\modulo p$ and $b\equiv r \modulo 2p$. The group $\Gamma_0(p)$ acts naturally on $\mathcal{Q}_{K,p}$ by coordinate transformation. It is a classical fact essentially due to Gau{\ss} that there is a natural bijection (depending on the choice of $r \modulo p$)
\begin{equation}\label{eq:Gaussiso} \Cl_K^+ \xrightarrow{\sim} \Gamma_0(p)\backslash \mathcal{Q}_{K,p}. \end{equation}
When the level is trivial the above bijection is induced by mapping $ax^2+bxy+cy^2$ to the narrow ideal class of the fractional ideal $[1,\frac{-\mathrm{sign}(a)b+\sqrt{d_K}}{2|a|}]$ (using the notation (\ref{eq:alphabeta})). 

Given an integral binary quadratic form $Q(x,y)=ax^2+bxy+cy^2$ of discriminant $d_K $ and level $p$, we associate the following matrix
\begin{align}\label{eq:gammaQ}\gamma_Q:=\begin{pmatrix} u+bv & 2cv \\ -2av & u-bv\end{pmatrix}\in \Gamma_0(p),\end{align}
where $u,v$ are positive half-integers satisfying Pell's equation $u^2-d_Kv^2=1$ and such that $v$ is minimal among all such solutions (i.e. the fundamental positive unit of $K$ is $\epsilon_K=u+v\sqrt{d_K}$). 

For $A\in \Cl_K^+$, we denote by $\mathcal{C}_{A}(p)$ the oriented closed geodesic on $Y_0(p)$ obtained by projecting the oriented geodesic connecting $z_Q$ and $\gamma_Q z_Q$ where 
$$z_Q:=\frac{-\mathrm{sign} (a)b+i \sqrt{d_K}}{2|a|},$$
and $Q\in \mathcal{Q}_{K,p}$ corresponds to $A$ under the isomorphism (\ref{eq:Gaussiso}) (the image in $Y_0(p)$ is independent of the choice of representative $Q$). 

  \subsection{(Co)homology of modular curves}\label{sec:cohom}
Here and throughout we will consider matrices in $\SL_2(\R)$ as  elements of $\PSL_2(\R)$ without further mentioning. Let $p$ be prime  and consider the Hecke congruence group (or more precisely its projection to $\PSL_2(\R)$) 
$$\Gamma_0(p):=\{\begin{psmallmatrix} a&b\\c&d\end{psmallmatrix} \in \PSL_2(\Z): p|c\}.$$
Let 
$$Y_0(p):=\Gamma_0(p)\backslash \Hb,\quad  X_0(p):=\overline{Y_0(p)}=Y_0(p)\cup \Gamma_0(p)\backslash \mathbb{P}^1(\Q) $$ 
be resp., the modular curve of level $p$ and its compactification which is a compact Riemann surface of genus $g=\frac{p}{12}+O(1)$ (see e.g. \cite[Proposition 1.40]{Sh94}). We can consider the integral singular homology group \cite[Chapter 2]{Hatcher02} 
$$H_1(Y_0(p),\Z)\cong \Z^{2g+1},$$ 
which sits as a lattice inside the real homology 
$$H_1(Y_0(p),\R)\cong \R^{2g+1}.$$ 
We will be interested in the distribution of oriented closed geodesics inside the lattice $H_1(Y_0(p),\Z)$.  

We have the cap product pairing
\begin{equation}\label{eq:cap} \langle\cdot, \cdot \rangle: H_1(Y_0(p),\R)\times H^1(Y_0(p),\R)\rightarrow \R,\end{equation}
between real homology and cohomology which identifies $H^1(Y_0(p),\R)$ with the linear dual $H_1(Y_0(p),\R)^\ast$. Given a basis $B$ of $H_1(Y_0(p), \R)$ we denote by $ B^{\ast}\subset H^1(Y_0(p), \R)$ the dual basis of $B$ with respect to the cap product pairing as in (\ref{eq:cap}). 

Recall that the de Rham isomorphism gives a  description of $H^1(Y_0(p),\R)$ in terms of real valued harmonic 1-forms on $Y_0(p)$. Any such 1-form is a linear combination of forms of the type;
$$  \omega_f=2\pi i f(z)dz ,\quad  \overline{\omega_f}=\overline{2\pi i f(z)dz}, $$
where $f\in \mathcal{M}_2(p)$ is a weight 2 and level $p$ holomorphic form (not necessarily cuspidal). We also have a surjective map 
\begin{align}\label{eq:projab} \Gamma_0(p)\twoheadrightarrow H_1(Y_0(p), \Z),\quad \gamma\mapsto \{ z, \gamma z\}\end{align}
where $z\in \Hb$ (the class does not depend on the choice of $z$) and we are using the following notation for $z_1,z_2\in \Hb$;
\begin{equation}\label{eq:def}\{z_1, z_2\}:= [\text{class of the oriented geodesic connecting $z_1$ and $z_2$}]\in H_1(Y_0(p), \R), \end{equation}
which defines an element of the real homology $H_1(Y_0(p), \R)$ via integration against $1$-forms. The map (\ref{eq:projab}) induces an isomorphism
\begin{align}\Gamma_0(p)^\mathrm{ab}/(\Gamma_0(p)^\mathrm{ab})_\mathrm{tor}\cong H_1(Y_0(p),\Z).\end{align}
We note that for a closed oriented geodesic $\mathcal{C}_{A}(p)$ as in the previous section the homology class $[\mathcal{C}_{A}(p)]\in H_1(Y_0(p),\Z)$ corresponds exactly to the image of (any) $\gamma_Q$ as in (\ref{eq:gammaQ}) under the map (\ref{eq:projab}). Using the above identifications  the cap product pairing is induced by the map 
$$ \Gamma_0(p)\times \mathcal{M}_2(p)\ni (\gamma, f)\mapsto \int^{\gamma z}_z \omega_f,  $$
for any $z\in \Hb$. 


The natural pullback map induced by inclusion, fits into a short exact sequence of $\R$-vector spaces
\begin{equation}\label{eq:SES} 0\rightarrow H^1(X_0(p),\R)\rightarrow H^1(Y_0(p),\R)\rightarrow \R \rightarrow 0, \end{equation}
using here that we have two cusps. This identifies $H^1(X_0(p),\R)$ with the parabolic classes in $H^1(Y_0(p),\R)$ (i.e. classes which vanish on all parabolic elements of $\Gamma_0(p)$ using the above pairing). Under the de Rham isomorphism the parabolic classes correspond to $1$-forms obtained from holomorphic cusp(!) forms of weight $2$ and level $p$ (see e.g. \cite[Section 5]{Sh94}). Similarly, we have the pushforward map $H_1(Y_0(p),\R)\rightarrow H_1(X_0(p),\R)$ whose kernel is exactly given by the image of the parabolic elements of $\Gamma_0(p)\otimes \R$ inside $H_1(Y_0(p),\R)$ under the map (\ref{eq:projab}).


\subsubsection{Hecke operators}\label{sec:cohomH}Due to the arithmetic nature of $\Gamma_0(p)$ we have a family of commuting linear operators acting on all of the above mentioned (co)homology groups, namely the \emph{Hecke operators}. The Hecke action is induced by the following; on the space of holomorphic forms $\mathcal{M}_2(p)$ of weight 2 and level $p$ the $n$-th Hecke operator $T_n$ acts by (see e.g.\cite[(14.46)]{IwKo})
\begin{equation} T_n f(z):= \frac{1}{n}\sum_{\substack{ad=n,\\ (a,p)=1}} a^2 \sum_{0\leq b<d} f\left( \frac{az+b}{d} \right), \end{equation} 
the Fricke involution $W_p$ acts as 
\begin{equation}\label{eq:Fricke} W_p f(z):=p^{-1}z^{-2} f(-1/(pz)),\end{equation}       
and we also have the involution $\iota$ given by 
\begin{equation}\iota f(z):= f(-\overline{z}),\end{equation}
defined for $f\in \mathcal{M}_2(p)\oplus \overline{\mathcal{M}_2(p)}$.  Notice that with this normalization the Ramanujan conjecture amounts to the bound $\leq d(n)n^{1/2}$ for the Hecke eigenvalues. See also \cite[Section 5]{Sh94} for an intrinsic definition in terms of group cohomology using double cosets. Similarly we can define an action on the homology groups by (using the notation (\ref{eq:def}))
\begin{equation}\label{eq:Tn} T_n \{z_1, z_2\}:= \sum_{\substack{ad=n,\\ (a,p)=1}} \,\,\sum_{0\leq b<d} \left\{\frac{az_1 +b}{d}, \frac{a z_2 +b}{d}\right\},   \end{equation}  
\begin{equation}\label{eq:Frickehom}W_p\{z_1, z_2\}:=\{-1/(pz_1), -1/(pz_2)\}, \end{equation}        
and 
\begin{equation} \label{eq:iota}\iota \{z_1, z_2\}:= \{-\overline{z_1}, -\overline{z_2}\}.  \end{equation}
One can now check that all of these operators are pairwise adjoint with respect to the cap product pairing as above. Furthermore, it can be shown that all of these linear operators commute and thus we can find a common eigen-basis. Explicitly, such a Hecke eigen-basis for $H^1(Y_0(p),\R)$ is given by 
\begin{align}\label{eq:Heckebasis} B_\mathrm{Hecke}(p):=\{\omega_f^{\epsilon}: f\in \mathcal{B}_p,\epsilon\in\{\pm\}\}\cup\{ \omega_{E}(p) \}, \end{align}
where 
$$ \omega_f^{\pm}= \left( 2\pi i f(z)dz\pm \overline{2\pi i f(z)dz}\right)/(1+i \pm \overline{1+i}))\in H^1(Y_0(p), \R),$$
with $\mathcal{B}_p:=\{f_1,\ldots, f_g\}\subset \mathcal{S}_2(p)$ a basis of Hecke normalized (i.e. the first Fourier coefficient is $1$) holomorphic cuspidal eigen forms of weight $2$ and level $p$. And 
\begin{equation}\label{eq:eisensteinclass} \omega_{E}(p):=E_{2,p}(z) dz\in H^1(Y_0(p), \R),\end{equation}
is the normalized \emph{Eisenstein class} defined from the weight $2$ Eisenstein series of level $p$; 
$$ E_{2,p}(z):=\frac{pE_2(pz)-E_2(z)}{p-1}=\frac{pE^\ast_2(pz)-E^\ast_2(z)}{p-1}=1+\frac{24}{p-1}e^{2\pi i z}+\ldots,$$ 
where $E_2$ denotes the the weight $2$ Eisenstein series of level $1$ and $E^\ast_2$ the modified Eisenstein series given by
\begin{equation}\label{eq:E2}E_2(z):=1-24\sum_{n\geq 1} \sigma_1(n)e^{2\pi i n z}=E_2^\ast(z)+\frac{3}{\pi y}.\end{equation} 
Here $\sigma_1(n)=\sum_{d|n}d$ is the sum of divisor function. One obtains a Hecke eigen basis for the homology as the dual basis of $B_\mathrm{Hecke}(p)$ with respect to the cap product pairing (\ref{eq:cap}) which we denote by;
\begin{align}\label{eq:Heckebasisdual} \check{B}_\mathrm{Hecke}(p):=\{v_f^{\pm}: f\in \mathcal{B}_p,\pm\}\cup\{ v_{E}(p) \}\subset H_1(Y_0(p),\R) , \end{align}
where $\langle v_f^\pm,\omega_f^\pm\rangle=1$ for $f\in \mathcal{B}_p$ and $\langle v_E(p),\omega_E(p)\rangle=1$. Since the constant Fourier coefficient of $E_{2,p}$ is $1$ one sees that in fact  
\begin{equation}\label{eq:eisensteinclasshom}v_{E}(p)=\{i,i+1\}\in H_1(Y_0(p),\Z), \end{equation}
with notation as in (\ref{eq:def}). In other words $v_{E}(p)$ is the (normalized) Eisenstein class in homology appearing in Theorem \ref{thm:std3} which geometrically is the class of a simple loop going around the cusp at $\infty$. 


We have the following classical formula for $\gamma\in \PSL(\Z)$ (see e.g. \cite[(55)]{DuImTo18}):
$$ \int_{z}^{\gamma z} E_2^\ast(z) dz= \Psi(\gamma),$$
where 
$$\Psi(\begin{psmallmatrix} a &b\\c &d \end{psmallmatrix}):= \begin{cases} \frac{a+d}{c}-12\,\mathrm{sign}(c)s(a,c)-3 \,\mathrm{sign}(c(a+d)),& c\neq 0\\ \frac{b}{d},& c=0,\end{cases}$$
is the Rademacher symbol with 
$$s(a,c):=\sum_{n=1}^{ c } (\!(\tfrac{n}{c})\!)(\!(\tfrac{na}{c})\!),$$
the classical Dedekind sum with 
$$(\!(x)\!)=\begin{cases}x-\lfloor x\rfloor-1/2,& x\notin \Z,\\ 0,& x\in \Z.\end{cases}$$
the sawtooth function. Notice that $\Psi(\gamma)$ does not depend on the representative of $\gamma\in \PSL_2(\Z)$ as should be the case. This implies the following key formula for $\gamma\in \Gamma_0(p)$
\begin{equation}\label{eq:DedekindEisenstein}\langle \{z,\gamma z\}, \omega_{E}(p) \rangle= \frac{\Psi\left(\gamma'\right)- \Psi(\gamma)}{p-1},\end{equation}  
where 
$$\gamma'=\begin{psmallmatrix} p & 0 \\0 &1 \end{psmallmatrix}\gamma \begin{psmallmatrix} 1/p & 0 \\0 &1 \end{psmallmatrix}=\begin{psmallmatrix} a & bp \\c/p &d \end{psmallmatrix}\in \PSL_2(\Z), $$
with $\gamma=\begin{psmallmatrix} a & b \\c &d \end{psmallmatrix}$. Using the trivial bound $\Psi(\gamma)\ll \frac{|a+d|}{ c }+ c $ (for $c> 0$) we get the following useful estimate
\begin{equation}\label{eq:BoundEisenstein}\langle \{z,\gamma z\}, \omega_{E}(p) \rangle\ll  \frac{p|a+d|+ c ^2}{p c }, \quad \gamma=\begin{psmallmatrix} a & b \\c &d \end{psmallmatrix}\in \Gamma_0(p), c> 0.\end{equation}

Recall the short exact sequence (\ref{eq:SES}) above. A Hecke-equivariant splitting is given by mapping $1\in\R$ to the Eisenstein class $\omega_{E}(p)=E_{2,p}(z)dz$ (this fits into a general framework due to Franke \cite{Franke98}). This defines an isomorphism
\begin{equation}\label{eq:franke}H^1(Y_0(p),\R)\cong H^1(X_0(p),\R)\oplus \R \omega_{E}(p),\end{equation}
and we will denote the projection to the {\lq\lq}cuspidal subspace{\rq\rq} (which we will identify with $H^1(X_0(p),\R)$) with respect to this splitting by 
\begin{equation}\label{eq:franke1} \mathbb{P}_\mathrm{cusp}:H^1(Y_0(p),\R)\rightarrow H^1(X_0(p),\R)\subset  H^1(Y_0(p),\R), \end{equation}  
which is explicitly given by 
$$ \mathbb{P}_\mathrm{cusp} \omega= \omega- \langle v_{E}(p),\omega\rangle\omega_{E}(p),  $$
where $v_{E}(p), \omega_{E}(p)$ are the Eisenstein classes defined above.

\section{Background on Fuchsian groups} \label{sec:dirichlet}
In this section we will review some useful facts regarding the geometry of fundamental polygons of Fuchsian groups. We will refer to \cite[Section 9]{Beardon83} for an in-depth treatment.
\subsection{Fundamental polygons}\label{sec:fundpoly}
Let $\Gamma\subset \PSL_2(\R)$ be a Fuchsian subgroup of the first kind, i.e. a discrete and cofinite subgroup of $\PSL_2(\R)$. A  \emph{fundamental domain} for $\Gamma$ is a locally finite domain $\mathcal{F}\subset \Hb$ such that 
\begin{enumerate}
\item For any $z\in \Hb$, we have $\gamma z\in \overline{\mathcal{F}}$ for some $\gamma\in\Gamma$.
\item\label{item:2} If $z_1,z_2\in \overline{\mathcal{F}}$ are $\Gamma$-equivalent then $z_1=z_2$ or $z_1,z_2\in \partial \mathcal{F}$.
\end{enumerate}
 We say that $\mathcal{F}$ is a \emph{fundamental polygon} for $\Gamma$ if $\mathcal{F}$ is furthermore (hyperbolically) convex with piecewise geodesic boundary. We define a \emph{side of} $\mathcal{F}$ as a non-empty subset of the shape $\gamma \overline{\mathcal{F}}\cap \overline{\mathcal{F}}$ with $ \mathrm{Id}\neq \gamma \in\Gamma$. We define a \emph{vertex of} $\mathcal{F}$ as a non-empty subset of the shape $\gamma_1 \overline{\mathcal{F}}\cap \gamma_2 \overline{\mathcal{F}} \cap \overline{\mathcal{F}}$ with $ \mathrm{Id},\gamma_1,\gamma_2\in \Gamma$ pairwise distinct. It can be shown that a fundamental polygon $\mathcal{F}$ has an even  number of sides which are pairwise $\Gamma$-equivalent, and $\Gamma \mathcal{F}$ gives a tessellation of $\Hb$. The set of elements identifying sides of $\mathcal{F}$ are called the \emph{side pairing transformations} associated to $\mathcal{F}$ which we denote by $\mathcal{S}(\mathcal{F})\subset \Gamma$ (which formally is a multiset if there are order two elements in $\Gamma$). Given a side $L$ of $\mathcal{F}$ we refer to \emph{the side pairing transformation  associated to the side $L$} as the element $\sigma \in \mathcal{S}(\mathcal{F})$  such that $\sigma^{-1}L$ is also a side of $\mathcal{F}$. Similarly we say that $\sigma$ is the side pairing transformation associated to the side $\gamma L$ of the translate $\gamma \mathcal{F}$ for each $\gamma \in \Gamma$.  
  

It is known that $\mathcal{S}(\mathcal{F})$ generates $\Gamma$ for any fundamental polygon $\mathcal{F}$, and it is a fundamental fact that one can understand the relation between the elements of $\mathcal{S}(\mathcal{F})$ from the geometry of a fundamental polygons and visa versa (see Lemma \ref{lem:poincare}). A simple but key incarnation is the following.

\begin{prop} \label{prop:geocoding}
Let $\mathcal{F}$ be a fundamental polygon for a discrete and co-finite subgroup $\Gamma\subset \PSL_2(\R)$. Consider a sequence of consecutive $\Gamma$-translates of $\mathcal{F}$;
$$\mathcal{F},\gamma_1 \mathcal{F},\ldots, \gamma_n \mathcal{F}.$$ 
Then 
$$\gamma_n=\sigma_1\sigma_2\cdots \sigma_n,$$ 
where $\sigma_i\in \mathcal{S}(\mathcal{F})$ denotes the side pairing transformation associated to the side shared between $\gamma_{i-1} \mathcal{F}$ and $\gamma_{i} \mathcal{F}$ (here we put $\gamma_0=\mathrm{Id}$).
\end{prop}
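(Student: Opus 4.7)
The plan is to proceed by a short induction on $n$, the content being a single observation that unpacks the definition of the side pairing transformation.

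First I would establish the key observation: whenever two $\Gamma$-translates $\gamma\mathcal{F}$ and $\gamma'\mathcal{F}$ share a side (i.e.\ the intersection $\gamma\overline{\mathcal{F}}\cap \gamma'\overline{\mathcal{F}}$ is a full side of each), the side pairing transformation associated to this shared side is precisely $\gamma^{-1}\gamma'\in\mathcal{S}(\mathcal{F})$. To see this, translate by $\gamma^{-1}$: the shared side becomes $\overline{\mathcal{F}}\cap (\gamma^{-1}\gamma')\overline{\mathcal{F}}$, which is exactly of the form $\sigma\overline{\mathcal{F}}\cap\overline{\mathcal{F}}$ with $\sigma=\gamma^{-1}\gamma'$, hence a side of $\mathcal{F}$ in the sense of the definition. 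Moreover $\sigma^{-1}$ carries this side to $\overline{\mathcal{F}}\cap \sigma^{-1}\overline{\mathcal{F}}$, which is again visibly of the form of a side of $\mathcal{F}$, so $\sigma$ satisfies the defining property of the side pairing transformation. The convention that the side pairing transformation is the same for the side $\gamma L$ of the translate $\gamma\mathcal{F}$ as for the side $L$ of $\mathcal{F}$ then yields $\sigma = \gamma^{-1}\gamma'$ on the nose.

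Applying this to the consecutive pair $\gamma_{i-1}\mathcal{F}$ and $\gamma_i\mathcal{F}$ (with $\gamma_0=\mathrm{Id}$) immediately gives $\sigma_i = \gamma_{i-1}^{-1}\gamma_i$, i.e.\ $\gamma_i = \gamma_{i-1}\sigma_i$. A trivial induction starting from $\gamma_1 = \gamma_0\sigma_1 = \sigma_1$ then delivers
\[
\gamma_n = \gamma_{n-1}\sigma_n = \gamma_{n-2}\sigma_{n-1}\sigma_n = \cdots = \sigma_1\sigma_2\cdots\sigma_n,
\]
as claimed.

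I do not expect any real obstacle. The only minor point worth noting is that the sequence of translates being \emph{consecutive} is implicitly the hypothesis that each successive pair $\gamma_{i-1}\mathcal{F}$, $\gamma_i\mathcal{F}$ shares a (well-defined) common side, so that the element $\sigma_i$ is unambiguously specified; this is guaranteed by the convexity of the fundamental polygon and the local finiteness of the tessellation $\Gamma\mathcal{F}$ recalled at the start of Section~\ref{sec:fundpoly}. Aside from this, the proposition is essentially a bookkeeping consequence of the definition of $\mathcal{S}(\mathcal{F})$ combined with the group law.
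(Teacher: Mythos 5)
Your proposal is correct and follows essentially the same inductive argument as the paper: the paper phrases the step as $\gamma_i\mathcal{F}=(\sigma_1\cdots\sigma_{i-1})\sigma_i(\sigma_1\cdots\sigma_{i-1})^{-1}\,(\gamma_{i-1}\mathcal{F})=\sigma_1\cdots\sigma_i\mathcal{F}$ and then invokes property (2) of fundamental domains to pass from equality of translates to equality of group elements, while you identify $\sigma_i=\gamma_{i-1}^{-1}\gamma_i$ directly from the definition. The two formulations are equivalent, and your observation that well-definedness of $\sigma_i$ rests on the definition of a side and the convention for sides of translates is exactly the point the paper's proof uses implicitly.
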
 
\begin{proof}Note that the side of $\mathcal{F}$ shared with $\gamma_1 \mathcal{F}$ will exactly have associated $\sigma_1=\gamma_1\in \mathcal{S}(\mathcal{F})$. Now if the side shared between $\gamma_1 \mathcal{F}$ and $\gamma_2 \mathcal{F}$ has associated $\sigma_2\in \mathcal{S}(\mathcal{F})$ then we have
$$\gamma_2\mathcal{F}=(\sigma_1\sigma_2\sigma_1^{-1})(\sigma_1 \mathcal{F})=\sigma_1 \sigma_2 \mathcal{F}.$$
Continuing like this we get 
$$\gamma_n\mathcal{F}=\sigma_1 \cdots \sigma_n \mathcal{F},$$
and since $\gamma \mathcal{F}=\gamma' \mathcal{F}$ implies $\gamma=\gamma'\in \Gamma$ by condition (\ref{item:2}), we conclude the wanted equality.
 \end{proof}
 A first application of Proposition \ref{prop:geocoding} is the following slight reformulation. 
\begin{cor}\label{cor:geocoding}Let $c:[0,1]\rightarrow \Hb$ be a continuous, injective curve with $c(0)=z\in \mathcal{F}$ and $c(1)=\gamma z$ for some $\gamma\in \Gamma$. Assume that $c([0,1])$ does not intersect the set of $\Gamma$-translates of the vertices of $\mathcal{F}$. Then one can \emph{code} the element $\gamma$ in the following way; let $\sigma_1,\ldots, \sigma_n\in \mathcal{S}(\mathcal{F})$ be the (ordered) sequence of side pairing transformations associated to the intersection between the curve $c$ and $\Gamma$-translates of the sides of $\mathcal{F}$. Then one has $$\gamma=\sigma_1\cdots\sigma_n.$$
\end{cor}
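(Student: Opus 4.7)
The approach is to reduce this to Proposition \ref{prop:geocoding} by extracting from $c$ a canonical sequence of consecutive $\Gamma$-translates of $\mathcal{F}$ that the curve visits in order, starting at $\mathcal{F}$ and ending at $\gamma \mathcal{F}$.

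First I would invoke local finiteness of the tessellation $\Gamma \mathcal{F}$ of $\Hb$: since $c([0,1])$ is compact, only finitely many translates $\gamma \mathcal{F}$ meet the image of $c$. Let $S = \bigcup_{\gamma \in \Gamma} \gamma \partial \mathcal{F}$ denote the union of all sides of all translates (the one-skeleton of the tessellation). By hypothesis $c$ avoids the discrete set of $\Gamma$-translates of vertices of $\mathcal{F}$, so every point of $c([0,1]) \cap S$ lies in the relative interior of some side. Local finiteness forces $c([0,1])$ to meet only finitely many such open geodesic segments, and together with continuity and injectivity of $c$ this implies that the preimage $c^{-1}(S) \subset [0,1]$ is a finite set. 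Enumerate its elements as $0 < t_1 < \cdots < t_n < 1$.

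On each open interval $(t_{i-1}, t_i)$, with the convention $t_0 = 0$ and $t_{n+1} = 1$, the curve avoids $S$ and so lies entirely in the interior of a single translate $\gamma_i \mathcal{F}$, where $\gamma_0 = \mathrm{Id}$ using $c(0) = z \in \mathcal{F}$. This produces a sequence $\mathcal{F}, \gamma_1 \mathcal{F}, \ldots, \gamma_n \mathcal{F}$ meeting the consecutiveness hypothesis of Proposition \ref{prop:geocoding}: at each $t_i$ the curve passes from $\gamma_{i-1}\mathcal{F}$ to $\gamma_i \mathcal{F}$ through the relative interior of their shared side, which by definition has a well-defined associated side pairing transformation $\sigma_i \in \mathcal{S}(\mathcal{F})$. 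These $\sigma_i$ are precisely the transformations listed in the statement and occur in the correct order.

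Finally, the last segment of $c$ lies in $\gamma_n \mathcal{F}$ and terminates at the interior point $\gamma z \in \gamma \mathcal{F}$; condition (ii) of the fundamental domain property then forces $\gamma = \gamma_n$. Proposition \ref{prop:geocoding} applied to the sequence $\mathcal{F}, \gamma_1 \mathcal{F}, \ldots, \gamma_n \mathcal{F}$ now yields $\gamma = \sigma_1 \cdots \sigma_n$, as required. The main subtle point is ensuring that $c^{-1}(S)$ is finite and that each element corresponds to an honest passage between two adjacent translates through the relative interior of a single side; both issues are handled by combining local finiteness of the tessellation with the vertex-avoidance hypothesis, after which the conclusion is essentially bookkeeping feeding directly into Proposition \ref{prop:geocoding}.
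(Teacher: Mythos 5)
Your argument is correct and is precisely the reduction the paper intends: the corollary is presented as an immediate reformulation of Proposition \ref{prop:geocoding}, obtained exactly by reading off the consecutive $\Gamma$-translates of $\mathcal{F}$ visited by the curve and applying the proposition to that sequence. The only step you assert slightly too quickly is that continuity and injectivity alone force $c^{-1}(S)$ to be finite (a continuous injective curve can cross a single geodesic segment infinitely often), but the statement of the corollary already presupposes a finite ordered list $\sigma_1,\ldots,\sigma_n$, and in every application the curves are (piecewise) geodesic, so this is harmless.
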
 
When $c([0,1])$ is a closed geodesic (when projected to $\Gamma\backslash \Hb$) this is known as \emph{geometric coding} of geodesics (see \cite{Katok96} for a nice treatment). 

As a second consequence we note that if the sequence of translates {\lq\lq}loops around{\rq\rq}, meaning that $\gamma_n=1$, then one obtains a relation between the side pairing transformations $S(\Gamma)$. This yields immediately that if two sides of $\mathcal{F}$ are paired then the associated elements in $\mathcal{S}(\mathcal{F})$ are inverses. We call these the \emph{inverse relations} of $\mathcal{F}$. 
We also get relations by looping around the vertices of $\mathcal{F}$ which we will now make precise. Observe that the embedding $\mathcal{F}\subset \Hb$ defines an orientation on the boundary $\partial \mathcal{F}$. Let $L_1$ be a side of $\mathcal{F}$ with left most (wrt. the orientation) vertex $v_1$ and let $\sigma_1\in \mathcal{S}(\mathcal{F})$ be the associated side pairing transformation. Let $L_2$ be the side of $\mathcal{F}$ different from $\sigma_1^{-1} L_1$ containing the vertex $v_2=\sigma_1^{-1} v$ and let $\sigma_2\in \mathcal{S}(\mathcal{F})$ be the associated side pairing transformation. Continuing like this yields a periodic sequence of pairs 
$$(L_1,v_1), (L_2,v_2),\ldots,$$
with minimal period $m\geq 1$, say, which we call a \emph{cycle of $\mathcal{F}$}. It is now clear that $\sigma_1\sigma_2\ldots \sigma_m$ fixes $v_1$ and thus if we put $\nu=|\Gamma_{v_1}|$ (i.e. the size of the stabilizer of $v_1$ inside $\Gamma$) then we get the following relation on the side pairing transformations;
$$ (\sigma_1\sigma_2\ldots \sigma_m)^\nu=1, $$
where if $\nu=\infty$ (i.e. $v_1$ is a boundary vertex) this is understood as the empty relation. Notice that the relation does not depend on the choice of starting point $(L_1,v_1)$. We call these the \emph{cycle relations of $\mathcal{F}$} and we have the following key theorem of Poincar\'{e}, see \cite{Maskit71} for a proof.
\begin{lemma}[Poincar\'{e}'s Theorem] \label{lem:poincare}The side pairing transformations $\mathcal{S}(\mathcal{F})$ generate $\Gamma$, and the inverse and cycle relations give a complete set of relations for $\mathcal{S}(\mathcal{F})$.
\end{lemma}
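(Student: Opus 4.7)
My plan has two parts: first show that $\mathcal{S}(\mathcal{F})$ generates $\Gamma$, and then that the inverse and cycle relations are complete. Generation is essentially already contained in Corollary \ref{cor:geocoding}: given $\gamma\in\Gamma$, fix an interior point $z\in\mathcal{F}$ and connect $z$ to $\gamma z$ by a continuous injective curve $c$, which after an arbitrarily small generic perturbation avoids the countable set of $\Gamma$-translates of vertices of $\mathcal{F}$. Local finiteness of the tessellation $\Gamma\mathcal{F}$ guarantees that $c$ crosses only finitely many sides, and Corollary \ref{cor:geocoding} expresses $\gamma$ as the product of the corresponding side pairing transformations.

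The substantial content is completeness of the relations. Let $G$ denote the abstract group presented by generators $\mathcal{S}(\mathcal{F})$ subject to the inverse and cycle relations. The previous paragraph yields a surjective homomorphism $\phi\colon G\twoheadrightarrow\Gamma$, and the task is to show that $\phi$ is injective. I would proceed via a developing-map argument: form a topological space $X$ by gluing copies $\{g\}\times\overline{\mathcal{F}}$ (indexed by $g\in G$ taken discrete) via the identifications $(g,z)\sim(g\sigma,\sigma^{-1}z)$ whenever $z$ lies on a side $L\subset\partial\mathcal{F}$ with associated side pairing transformation $\sigma$. The natural map
$$\mathrm{dev}\colon X\to\Hb,\qquad (g,z)\mapsto\phi(g)\cdot z,$$
is well-defined on equivalence classes precisely because $\phi$ respects the defining relations of $G$, and is a local isometry for the hyperbolic metric pushed down from $G\times\overline{\mathcal{F}}$.

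The key geometric claim is that $X$ is a connected, complete, simply-connected hyperbolic surface; granting this, $\mathrm{dev}$ is a local isometry from a simply connected complete surface to $\Hb$, hence a global isometry, which forces the $G$-action on copies of $\mathcal{F}$ inside $X$ to be free and so gives injectivity of $\phi$. Local smoothness of $X$ is clear along open sides by construction; at a point above an interior vertex $v$, the cycle $(L_1,v_1),\ldots,(L_m,v_m)$ produces a wedge of $m$ copies of $\mathcal{F}$ around each preimage, and the cycle relation $(\sigma_1\cdots\sigma_m)^{\nu}=1$ with $\nu=|\Gamma_v|$ is precisely what makes the $\nu m$ angles at $v$ sum to $2\pi$ and close up without monodromy. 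The main obstacle, and the place where the argument takes real work, is the non-cocompact case relevant to $\Gamma_0(p)$: at ideal vertices (cusps) one has $\nu=\infty$ and the cycle relation is vacuous, so one must separately verify that the gluing produces a genuine complete cusp neighborhood rather than a pathological identification. The standard remedy is to truncate $\mathcal{F}$ by horocycles, apply the cocompact case to the truncated surface, and pass to a limit using the finite number of cusp cycles; alternatively one invokes Maskit's noncompact refinement of Poincar\'{e}'s theorem, which is precisely what is cited in the paper.
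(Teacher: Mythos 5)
The paper does not actually prove this lemma: it simply cites Maskit's article for the proof, so there is no in-text argument to compare against line by line. Your proposal is a correct outline of the standard developing-map proof, and the generation step via Corollary \ref{cor:geocoding} is exactly right (and non-circular, since Proposition \ref{prop:geocoding} uses only the tessellation property and condition (2) of a fundamental domain). One small remark on the logic: you do not need to establish simple connectivity of $X$ separately; a local isometry from a \emph{complete} surface $X$ to the connected target $\Hb$ is automatically a covering map, and since $\Hb$ is simply connected it is an isometry, after which simple connectivity of $X$ comes for free. The injectivity of $\phi$ then follows as you say, because $(1,z)$ and $(g,z)$ with $z$ interior are never identified in $X$ but would have the same image under $\mathrm{dev}$ if $\phi(g)=1$.

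That said, your route is the machinery for the \emph{hard direction} of Poincar\'{e}'s theorem, where one must deduce discreteness and the tessellation from the polygon data, and this is why you are forced to confront completeness of $X$ at the ideal vertices. In the setting of the lemma one already knows that $\mathcal{F}$ is a fundamental polygon for the given discrete group $\Gamma$, i.e.\ that $\Gamma\mathcal{F}$ tessellates $\Hb$, and this admits a more elementary completeness-of-relations argument that avoids constructing $X$ altogether. Given a word $\sigma_1\cdots\sigma_n=1$ in $\Gamma$, the chain of translates $\mathcal{F},\ \sigma_1\mathcal{F},\ \ldots,\ \sigma_1\cdots\sigma_n\mathcal{F}=\mathcal{F}$ is a closed edge-path in the dual graph of the tessellation; realize it as a closed loop in $\Hb$ through interior points of consecutive tiles. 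Since $\Hb$ is simply connected, this loop contracts within $\Hb$; the contraction has compact image, hence by local finiteness meets only finitely many tiles and finitely many (necessarily interior) vertices, and each elementary move of the homotopy across a vertex or back across a side modifies the word by a conjugate of a cycle relation or an inverse relation. Ideal vertices cause no trouble because they do not lie in $\Hb$ and so are never crossed, which is consistent with the parabolic cycle relation being empty. This is shorter and sidesteps precisely the horocycle-truncation issue you identify as the main obstacle; your developing-map argument is correct but proves a stronger statement than the lemma requires.
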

Secondly it is clear that each side appears in exactly one cycle and thus we obtain the following useful fact.
\begin{lemma}[cf. Lemma 5.3 of \cite{Voight09}]\label{lem:voight}
Each side pairing transformation appears exactly once in a cycle relation.
\end{lemma}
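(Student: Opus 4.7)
The plan is to establish a bijection between the sides of $\mathcal{F}$ and the positions appearing across all cycle relations of $\mathcal{F}$, from which the lemma follows since each side of $\mathcal{F}$ determines a unique element of the multiset $\mathcal{S}(\mathcal{F})$. First, I would fix the cyclic orientation on $\partial \mathcal{F}$ induced by the embedding $\mathcal{F}\subset \Hb$, so that each side $L$ has a well-defined \emph{leftmost} (starting) vertex $v_L$ and \emph{rightmost} (ending) vertex. Write $\mathcal{P} := \{(L, v_L) : L \text{ a side of }\mathcal{F}\}$, which is in canonical bijection with the sides.

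The central geometric step is to verify that the cycle construction defines a well-defined map $\phi : \mathcal{P}\to \mathcal{P}$, sending $(L_i, v_i)$ to $(L_{i+1}, v_{i+1})$. Given $(L_i, v_i)\in \mathcal{P}$ with associated $\sigma_i\in \mathcal{S}(\mathcal{F})$, the orientation-preserving isometry $\sigma_i^{-1}$ of $\Hb$ sends $L_i$ to $\sigma_i^{-1}L_i$; but since $\mathcal{F}$ lies to the \emph{left} of both $L_i$ and $\sigma_i^{-1}L_i$ with respect to their boundary orientations (while $\sigma_i^{-1}$ carries the left-side of $L_i$, namely $\mathcal{F}$, to $\sigma_i^{-1}\mathcal{F}$, which lies to the \emph{right} of $\sigma_i^{-1}L_i$), the induced boundary orientations are reversed. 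Consequently $\sigma_i^{-1}$ maps the leftmost vertex $v_i$ of $L_i$ to the rightmost vertex of $\sigma_i^{-1}L_i$, which by the cyclic structure of $\partial \mathcal{F}$ coincides with the leftmost vertex of the next side $L_{i+1}$, verifying $(L_{i+1}, v_{i+1})\in \mathcal{P}$.

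Next I would check that $\phi$ is a bijection by constructing its inverse explicitly: given $(L', v')\in \mathcal{P}$, identify the unique second side $M$ of $\mathcal{F}$ meeting $v'$ (sides of a polygon meet pairwise at each vertex), let $\tau\in \mathcal{S}(\mathcal{F})$ be its associated side-pairing transformation, and recover the predecessor as $(\tau M, \tau v')$. Since $\phi$ is a permutation of the finite set $\mathcal{P}$, its orbits partition $\mathcal{P}$ and, by construction, coincide with the cycles of $\mathcal{F}$ defined in the text. Each side $L$ therefore appears in exactly one cycle, contributing its associated transformation $\sigma_L\in \mathcal{S}(\mathcal{F})$ exactly once to the corresponding cycle relation.

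The main obstacle I anticipate is the orientation bookkeeping in the central step — one must be careful with cusps (where $v_L$ lies on $\partial \Hb$ and the vertex stabilizer is parabolic rather than finite, so the corresponding cycle relation is the empty relation as indicated in the text) and with involutory side pairings (where $\sigma = \sigma^{-1}$ is paired with two different sides, accounted for by the multiset convention on $\mathcal{S}(\mathcal{F})$). Both are routine once the convention that $v_L$ is the leftmost vertex is tracked consistently under the side pairings.
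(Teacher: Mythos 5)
Your argument is correct and follows essentially the same route as the paper, which simply asserts that ``each side appears in exactly one cycle'': you supply the orientation bookkeeping showing that the successor map $(L_i,v_i)\mapsto(L_{i+1},v_{i+1})$ preserves the property that the vertex is the leftmost vertex of the side, hence defines a permutation of $\mathcal{P}$ whose orbits are precisely the cycles. The one slip is in your explicit inverse: with the paper's convention that the transformation $\tau$ associated to $M$ is the one for which $\tau^{-1}M$ is again a side, the predecessor of $(L',v')$ is $(\tau^{-1}M,\tau^{-1}v')$ rather than $(\tau M,\tau v')$ --- a harmless sign-of-the-exponent issue, since the construction is visibly reversible and injectivity of a self-map of the finite set $\mathcal{P}$ already yields bijectivity.
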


\subsubsection{Bounding coordinates of side pairing transformations} 
Now we consider the image of the inverse and cycle relations when mapped to $V_p$ under the composition 
$$\Gamma_0(p)\twoheadrightarrow \Gamma_0(p)^\mathrm{ab}\twoheadrightarrow H_1(Y_0(p),\Z)\hookrightarrow V_p=H_1(Y_0(p),\R)\cong \R^{2g+1}.$$ 
Notice that since $V_p$ is torsion-free we can divide the relations by $\nu$. Combining the inverse and cycle relations with the choice of basis $B=\{v_0,\ldots, v_{2g}\}\subset V_p$ (corresponding to side pairing transformations under (\ref{eq:projab})) gives rise to a system of linear equations in $V_p$;
$$\mathcal{L}_{\mathcal{F},B}(x_0,\ldots, x_n)= \left\{ \sum_{0\leq i\leq n} a_{ik}x_i+\sum_{0\leq j\leq 2g} b_{jk}v_j=0 \right\}_{0\leq k \leq K},$$
where $x_0,\ldots, x_n$ are variables, one for each pair $\{\sigma,\sigma^{-1}\}\subset \mathcal{S}(\mathcal{F})$ not corresponding to an element of $B$. Notice that Lemmata \ref{lem:poincare} and \ref{lem:voight} translate to the two key properties:
\begin{enumerate}
\item\label{item:1v} $\mathcal{L}_{\mathcal{F},B}$ has exactly one solution $(x_0,\ldots, x_n)\in (V_p)^{n+1}$.
\item\label{item:2v} For any subset $A\subset \{0,\ldots, K\}$, $0\leq i\leq n$ and $0\leq j\leq 2g$, we have
$$ \sum_{k\in A}a_{ik},\, \sum_{k\in A}b_{jk} \in \{-1,0,1\}. $$
\end{enumerate}
We have the following general result about such systems of linear equations.
\begin{lemma}\label{lem:linear}
Let  
$$\mathcal{L}(x_0,\ldots, x_n)= \left\{ \sum_{0\leq i\leq n} a_{ik}x_i+\sum_{0\leq j\leq 2g} b_{jk}v_j=0 \right\}_{0\leq k \leq K},$$ 
 be a system of linear equations in $V_p$ satisfying (\ref{item:1v}) and (\ref{item:2v}).
 
Then the unique solution $(x_0,\ldots, x_n)\in (V_p)^{n+1}$ satisfies
\begin{equation}\label{eq:induction}x_i=\sum_{0\leq j\leq 2g} c_{ij} v_j,\quad c_{ij}\in \{-1,0,1\},\end{equation}
for all $0\leq i\leq n$.
\end{lemma}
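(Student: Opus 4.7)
The plan is to interpret $\mathcal{L}$ as a Kirchhoff flow problem on a suitable graph and then read off the coefficients combinatorially. I would first note that condition (\ref{item:2v}) is equivalent to the statement that every column of each of the matrices $(a_{ik})$ and $(b_{jk})$ contains at most one entry equal to $+1$ and at most one equal to $-1$ (all other entries $0$), since two $+1$'s in a column would give a partial sum equal to $+2$. Given this, I would construct a graph $G$ with vertex set $\{0,\ldots,K\}\cup\{\ast\}$, where the auxiliary vertex $\ast$ is used to absorb ``half-edges''. Each nonzero column $i$ of $(a_{ik})$ contributes an $x$-edge $e_{x,i}$ oriented from the row $k^-_i$ of the $-1$ entry to the row $k^+_i$ of the $+1$ entry, with any missing endpoint set equal to $\ast$; each nonzero column $j$ of $(b_{jk})$ contributes a $v$-edge $e_{v,j}$ in the analogous way. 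With $e_{x,i}$ carrying the value $x_i\in V_p$ and $e_{v,j}$ the value $v_j$, the $k$-th equation of $\mathcal{L}$ is then precisely the signed conservation law ``(incoming flow) $-$ (outgoing flow) $=0$'' at the vertex $k$.

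Next, I would argue that the subgraph $G_x$ spanned by the $x$-edges is a forest. Indeed, if it contained a cycle, then equipping that cycle with a nontrivial flow (a fixed nonzero element of $V_p$ carried around the cycle with signs matching a chosen orientation) would satisfy every conservation equation while being supported on $x$-edges only, thus yielding a second solution to $\mathcal{L}$ in contradiction with the uniqueness hypothesis (\ref{item:1v}).

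Finally, for each $i$ I would remove $e_{x,i}$ from its tree component $T_i$ of $G_x$, splitting $T_i$ into two subtrees, and let $S_i\subseteq\{0,\ldots,K\}$ consist of the non-$\ast$ vertices of whichever subtree does \emph{not} contain $\ast$ (if $\ast\notin T_i$, pick either subtree). Summing the $k$-th equation of $\mathcal{L}$ over $k\in S_i$ would then produce an identity of the form $\pm x_i+\sum_{0\leq j\leq 2g}c_{ij}v_j=0$, with each coefficient $c_{ij}=\sum_{k\in S_i}b_{jk}$ lying in $\{-1,0,1\}$ directly by (\ref{item:2v}). The coefficient $\pm 1$ in front of $x_i$ arises because $e_{x,i}$ is, by construction, the unique $x$-edge having exactly one endpoint in $S_i$: every other $x$-edge in $T_i$ has both endpoints on the same side of the cut, while every $x$-edge outside $T_i$ is disjoint from $V(T_i)\supseteq S_i$. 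Solving for $x_i$ then yields (\ref{eq:induction}). The main obstacle, and the technical subtlety of the argument, is the careful placement of $\ast$ in the side \emph{opposite} to $S_i$: this is what guarantees that half-edges in $T_i$ incident to $\ast$ have their real endpoint outside $S_i$ and hence do not spuriously contribute to the sum, so that the coefficient of $x_i$ comes out exactly $\pm 1$ rather than some accidental larger value.
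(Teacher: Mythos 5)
Your proof is correct, but it follows a genuinely different route from the paper. The paper argues by induction on the number of equations: after reducing to the case where every equation contains a variable and some variable occurs exactly once, a counting argument (each variable occurs at most twice, by (\ref{item:1v}) and (\ref{item:2v})) forces the existence of an equation containing exactly one variable; one solves for it, adds that equation to the unique other equation containing that variable with opposite sign, and applies the induction hypothesis to the resulting smaller system. Your argument is instead global and structural: you recognize $(a_{ik})$ and $(b_{jk})$ as (augmented) incidence matrices of a directed multigraph, observe that uniqueness forces the $x$-edges to span a forest (a cycle would carry a nontrivial circulation, giving a second solution), and then extract $x_i$ by summing the equations over one side of the fundamental cut determined by $e_{x,i}$, with (\ref{item:2v}) bounding the resulting $v_j$-coefficients. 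This buys an explicit closed formula $c_{ij}=\mp\sum_{k\in S_i}b_{jk}$ and makes visible the connection to total unimodularity of network matrices, at the cost of more bookkeeping (the auxiliary vertex $\ast$, half-edges, multi-edges); the paper's elimination is shorter and stays closer to the geometric origin of the relations. Two small points you should make explicit: every column of $(a_{ik})$ is nonzero (a zero column would leave $x_i$ free, contradicting (\ref{item:1v})), so every variable really does correspond to an edge; and the cut side $S_i$ is nonempty because it contains the non-$\ast$ endpoint of $e_{x,i}$, so the summed identity is not vacuous.
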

\begin{proof}
We proceed by induction on $K+1\geq 0$ (i.e. the number of equations). If $K+1=0$ then there is nothing to prove. Now assume the claim is known for systems of $K'<K+1$ equations. We start by making the following reductions; we may assume that all equations contain a variable (i.e. $\forall k \exists i:a_{ik}\neq 0$), since otherwise we can remove such an equation. Furthermore, we may assume that there some variable appearing exactly once (i.e. $\exists i : |\{k: a_{ik}\neq 0\}|=1$). 
To see this observe that if this is not the case then by (\ref{item:1v}) and (\ref{item:2v}) every variable appears exactly twice with coefficients $\pm 1$, respectively. This means that the equation obtained by adding all of the equation in $\mathcal{L}(x_0,\ldots, x_n)$ has to be trivial. Thus by removing any equation, we obtain an equivalent system with $K$ equation and thus we are done by the induction hypothesis. 

The key observation is  that there is always an equation with exactly one variable appearing (i.e. $\exists k : |\{i: a_{ik}\neq 0\}|=1$). If not, then since every variable appears at most twice and there is one variable appearing once (by the above reductions) we have a system of linear equation with more variables than equations. Thus the number of solutions $(x_0,\ldots, x_n)\in (V_p)^{n+1}$ is either $0$ or $\infty$, which contradicts (\ref{item:1v}). 
Let $x_i$ be the only variable appearing in some equation. Then this equation determines $x_i$ which by (\ref{item:2v}) satisfies (\ref{eq:induction}). Now if $-x_i$ does not appear we are done by the induction hypothesis. Otherwise we add the equation containing $x_i$ to the one containing $-x_i$ which gives a system of $K$ linear equations again satisfying (\ref{item:1v}) and (\ref{item:2v}). Thus the claim follows from the induction hypothesis.
\end{proof}
\begin{cor}\label{lem:relations}
Let $\Gamma$ be a Fuchsian group with a fundamental polygon $\mathcal{F}$, and let $B$ be a basis for $H_1(\Gamma\backslash \Hb, \R)$ consisting of classes of the form $\{z,\sigma z\}$ with $\sigma\in \mathcal{S}(\mathcal{F})$ and $z\in \Hb$. 

Then we have for any $\sigma \in \mathcal{S}(\mathcal{F})$, $z\in \Hb$ and $\omega\in  B^{\ast}$ that
$$ |\langle \{z, \sigma z\}, \omega \rangle|\leq 1.$$
\end{cor}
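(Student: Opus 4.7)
The strategy is to apply Lemma~\ref{lem:linear} directly to the system $\mathcal{L}_{\mathcal{F},B}$ of linear equations in $V_p$ obtained from the cycle relations of $\mathcal{F}$, as described just before that lemma. First, the class $\{z,\sigma z\}$ is independent of $z$ via the map (\ref{eq:projab}). If $\sigma$ has order two, then $2\{z,\sigma z\}=0$ in $\Gamma_0(p)^{\mathrm{ab}}$, hence $\{z,\sigma z\}=0$ in the torsion-free space $V_p$ and the bound is trivial. If $\{z,\sigma z\}$ already belongs to $B$, the claim is immediate since $\omega\in B^{\ast}$ is the dual basis. So we may assume $\{z,\sigma z\}$ corresponds to one of the non-basis variables $x_i$ of $\mathcal{L}_{\mathcal{F},B}$.

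I then verify the two hypotheses of Lemma~\ref{lem:linear}. Property (\ref{item:1v}) follows from Poincar\'e's theorem (Lemma~\ref{lem:poincare}): $\mathcal{S}(\mathcal{F})$ generates $\Gamma_0(p)$, so the classes $\{z,\sigma z\}$ for $\sigma\in\mathcal{S}(\mathcal{F})$ span $V_p$; since $B$ is a basis, each non-basis $x_i$ is uniquely determined as an $\R$-linear combination of the $v_j$'s by the cycle relations. (The inverse relations are absorbed into the convention of choosing one variable per pair $\{\sigma,\sigma^{-1}\}$, with the sign $v_{\sigma^{-1}}=-v_{\sigma}$.)

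Property (\ref{item:2v}) is the main point and rests on Lemma~\ref{lem:voight}. Fix a pair $\{\sigma,\sigma^{-1}\}$ with representative $\sigma$ giving variable $x_i$. In any cycle relation $\sum_{\mu\in C_k}v_\mu=0$, an occurrence of $\sigma$ (resp.\ $\sigma^{-1}$) contributes $+1$ (resp.\ $-1$) to $a_{ik}$. By Lemma~\ref{lem:voight}, $\sigma$ appears in exactly one cycle and $\sigma^{-1}$ in exactly one cycle. Hence either both lie in the same cycle, in which case their contributions cancel and $a_{ik}=0$ for every $k$, or they lie in two distinct cycles, giving exactly one $+1$ and one $-1$ among the $a_{ik}$, with all other entries zero. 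In either case, $\sum_{k\in A}a_{ik}\in\{-1,0,1\}$ for any $A\subseteq\{0,\ldots,K\}$; the identical argument applies to $b_{jk}$.

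Applying Lemma~\ref{lem:linear} yields $\{z,\sigma z\}=\sum_j c_j v_j$ with $c_j\in\{-1,0,1\}$. Since $\omega\in B^{\ast}$ is dual to some $v\in B$, the pairing $\langle\{z,\sigma z\},\omega\rangle$ simply extracts one of these coordinates, which therefore has absolute value at most $1$. \textbf{The main obstacle} is confirming property (\ref{item:2v}), i.e.\ translating the combinatorial content of Lemma~\ref{lem:voight} into the partial-sum bound on coefficients; once this is in place, the corollary is a direct application of Lemma~\ref{lem:linear}.
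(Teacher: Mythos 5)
Your proof is correct and follows essentially the same route as the paper: identify the coordinates of $\{z,\sigma z\}$ in $B$ with (a sign times) the unique solution of the system $\mathcal{L}_{\mathcal{F},B}$ and invoke Lemma \ref{lem:linear}. The extra work you do — verifying properties (\ref{item:1v}) and (\ref{item:2v}) from Poincar\'{e}'s theorem and Lemma \ref{lem:voight}, and disposing of the order-two and in-basis cases — is exactly the content the paper asserts without detail in the discussion preceding Lemma \ref{lem:linear}, so there is no substantive difference in approach.
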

\begin{proof} 
Observe that if the variable $x_i$ corresponds to $\{\sigma,\sigma^{-1}\}\in \mathcal{S}(\mathcal{F})$ then the numbers
$$(\langle \{ z, \sigma z\}, \omega \rangle)_{\omega\in B^{\ast}},$$ are exactly the coordinates of either $x_i$ or $-x_i$ in the basis $B$ where $(x_0,\ldots, x_n)\in (V_p)^{n+1}$ is the unique solution to $\mathcal{L}_{\mathcal{F},B}$. Now the result follows directly from Lemma \ref{lem:linear}.
\end{proof}


\subsection{Zagier's fundamental polygon}\label{sec:zagier}
We will now consider a fundamental polygon for $\Gamma_0(p)$ introduces by Zagier \cite[Section 3]{Zagier85} which will give rise a set of natural bases for the homology. The following is a  fundamental polygon for $\Gamma_0(p)$; 
$$\tilde{\mathcal{F}}_\mathrm{Zag}(p)=\cup_{i=0}^{p} \sigma_i \mathcal{F}_\mathrm{std},$$ 
where $\sigma_i=\begin{psmallmatrix} 0 & -1 \\ 1 & i \end{psmallmatrix}$ for $0\leq i <p$ and $\sigma_p=\begin{psmallmatrix} 1 & 0 \\ 0 & 1 \end{psmallmatrix}$ and 
\begin{equation}\label{eq:std}\mathcal{F}_\mathrm{std}:=\{z\in \Hb: |\Re z|<1/2, |z|>1\},\end{equation}
 is the standard fundamental polygon for $\PSL_2(\Z)$. This gives rise to the side pairing transformation set
$$\mathcal{S}(\tilde{\mathcal{F}}_\mathrm{Zag}(p))=  
\{\begin{psmallmatrix} 1 & 0 \\ \pm p & 1 \end{psmallmatrix}\}\cup\{\begin{psmallmatrix} 1 & \pm 1 \\ 0 & 1 \end{psmallmatrix}\}\cup
\left\{\begin{psmallmatrix} -a^\ast & -1 \\ (aa^\ast+1) & j \end{psmallmatrix} : 0<a<p\right\}, $$
where $0<a^*<p$ is such that $aa^*\equiv -1 \modulo p$. Recall that $\Gamma_0(p)$ is normalized by the matrix $W_{p}=\begin{psmallmatrix} 0 & -1/\sqrt{p} \\ \sqrt{p} & 0 \end{psmallmatrix}$, which implies that also $\mathcal{F}_\mathrm{Zag}(p):=W_{p} \tilde{\mathcal{F}}_\mathrm{Zag}(p)$ is a fundamental polygon for $\Gamma_0(p)$ with side pairing transformations given by
\begin{align*}\mathcal{S}(\mathcal{F}_\mathrm{Zag}(p))&=  
W_{p}\left(\{\begin{psmallmatrix} 1 & 0 \\ \pm p & 1 \end{psmallmatrix}\}\cup \{\begin{psmallmatrix} 1 & \pm 1 \\ 0 & 1 \end{psmallmatrix}\}\cup
\left\{\begin{psmallmatrix} -a^\ast & -1 \\ (aa^\ast+1) & a \end{psmallmatrix} : 0<a<p\right\}\right)W_{p}^{-1}\\
&= \{\begin{psmallmatrix} 1 & \pm 1 \\ 0 & 1 \end{psmallmatrix}\}\cup \{\begin{psmallmatrix} 1 & 0 \\ \pm p & 1 \end{psmallmatrix}\}\cup
\left\{\begin{psmallmatrix} a & -(aa^\ast+1)/p \\ p & -a^\ast \end{psmallmatrix} : 0<a<p\right\}, \end{align*}
which by Lemma \ref{lem:poincare} generate $\Gamma_0(p)$ (as claimed in the introduction). Both of these fundamental polygons have the nice property that all cuspidal sides (i.e. sides containing a cusp) are paired by a parabolic element of $\Gamma_0(p)$ (which is not the case for all fundamental polygons). The elements of $\mathcal{S}(\mathcal{F}_\mathrm{Zag}(p))$ are minimal in the sense that the archimedean sizes of the entries are as small as one can hope for ($\leq p$). 
Explicitly, the fundamental polygon $\mathcal{F}_\mathrm{Zag}(p)$ has $p+3$ vertices; the two cusps $0$ and $\infty$ as well as $\frac{2j-1+i\sqrt{3}}{2p}$ for $0\leq j\leq p$. We see that the matrix $W_{p}$ takes the hyperbolic triangle with vertices $ \{\tfrac{-1+i\sqrt{3}}{2p},0,\tfrac{1+i\sqrt{3}}{2p}\}$ to the hyperbolic triangle with vertices $\{ \tfrac{-1+i\sqrt{3}}{2},\infty,\tfrac{1+i\sqrt{3}}{2}\}$. In particular, the subgroup $\langle W_{p}, \Gamma_0(p)\rangle \leq \PSL_2(\R)$ has a fundamental domain contained in 
\begin{equation} \label{eq:funddomainHp} \{ z\in \Hb : |\Re z|\leq 1/2, \Im z\geq \sqrt{3}/(2p) \},\end{equation}  
 which will be useful later on.

We define the following compatible family of bases of the homology groups.
\begin{defi}\label{defi:bsc}A \emph{basic basis of level $p$} is a  basis  ${B}\subset H_1(Y_0(p),\R)$ consisting of elements $\{z,\sigma z\}$ with $z\in \Hb$ and $\sigma \in \mathcal{S}(\mathcal{F}_\mathrm{Zag}(p))$.
\end{defi}

\subsection{Special fundamental polygons}\label{sec:kul}
By the Kurosh subgroup theorem we know that any subgroup of $\PSL_2(\Z)\cong \Z/2\Z\ast \Z/3\Z$ is isomorphic to a free product of a number of copies of $\Z/2\Z,\Z/3\Z$ and $\Z$. In particular, a torsion-free Hecke congruence subgroup $\Gamma_0(p)$ is a free group on $k=\mathrm{rank}_{\Z}\, \Gamma_0(p)^\mathrm{ab}$ generators. We will now describe an explicit geometric way due to Kulkarni \cite{Kulkarni91} for constructing a set of independent generators of $\Gamma_0(p)$. The starting point  are  so-called \emph{special fundamental polygons} of $\Gamma_0(p)$. 

Let $g$ be the genus of $Y_0(p)$, and $e_2,e_3$ the number of conjugacy classes of subgroups in $\Gamma_0(p)$ of order respectively $2$ and $3$.  Following \cite{Kulkarni91}, we define a \emph{Farey symbol of level $p$} as a sequence of reduced fractions;
$$\frac{0}{1}=\frac{a_{0}}{b_{0}}<\frac{a_1}{b_1}<\ldots<\frac{a_{n-1}}{b_{n-1}}< \frac{a_{n}}{b_{n}}=\frac{1}{1} , $$
with $n=4g+e_2+e_3$ such that $a_{i+1}b_{i}-a_{i}b_{i+1}=1$ for all $1\leq i<n$ and furthermore (considering below the indices modulo $n+2$) 
\begin{itemize}
\item there are $e_2$ even indices $i$ such that 
$$ b_i^2+b_{i+1}^2 \equiv 0 \modulo p, $$
\item there are $e_3$ odd indices $i$ such that 
$$ b_i^2+b_ib_{i+1}+b_{i+1}^2 \equiv 0 \modulo p, $$
\item for the remaining $4g$ free indices there is a pairing $i\leftrightarrow i^\ast$ satisfying 
$$ b_ib_{i^\ast}+b_{i+1}b_{i^\ast+1}\equiv 0 \modulo p.  $$ 
\end{itemize}
Such a symbol always exists and one can even find one which is symmetric around $1/2$ \cite[Section 13]{Kulkarni91}.  Dooms--Jesper--Konolalov \cite{DoomsJespersKonovalov10} have described an algorithm for determining Farey symbols of general level. 

Consider the polygon $\mathcal{P}(p)$ with vertices at $\infty$, at the fractions of the Farey symbol, at the midpoint of the geodesic circle connecting $a_i/b_i$ and $a_{i+1}/b_{i+1}$ for $i$ an even index, and for an odd index $i$ at the $\PGL_2(\Z)$-translate of $\frac{1+i\sqrt{3}}{2}$ lying between $a_i/b_i$ and $a_{i+1}/b_{i+1}$ (for details see \cite[Section 2]{Kulkarni91}). Note that $\mathcal{P}(p)$ consists of $\PGL_2(\Z)$-translates of 
\begin{equation}\label{eq:F+}\mathcal{F}^+:=\{z\in \Hb: 0<\Re z<1/2, |z|>1\}.
\end{equation}
The map $\ast$ defines a side pairing transformation on this polygon by identifying the halfcircle connecting $\frac{a_{i}}{b_{i}},\frac{a_{i+1}}{b_{i+1}}$ and the one connecting $\frac{a_{i^\ast+1}}{b_{i^\ast+1}},\frac{a_{i^\ast}}{b_{i^\ast}}$, as well as identifying the vertical sides of $\mathcal{P}(p)$ and the elliptic sides. By Poincar\'{e}'s theorem, $\mathcal{P}(p)$ together with this pairing defines a subgroup of $\PSL_2(\R)$ which can be shown to be equal to $\Gamma_0(p)$. Furthermore, since $\mathcal{P}(p)$ has a minimal number of sides, it follows that an independent set of generators of $\Gamma_0(p)$ is given by the matrices which maps between the sides identified by the pairing induces from $\ast$. These matrices are explicitly given by  (see \cite[Theorem 6.1]{Kulkarni91})
\begin{align}
\label{eq:basisT}T=\begin{psmallmatrix} 1 & 1 \\ 0 & 1\end{psmallmatrix},\end{align} 
and the $e_2$ matrices of order $2$ and $e_3$ matrices of order $3$
\begin{align}
\label{eq:basise2} &\begin{pmatrix} a_{i+1}b_{i+1}+a_{i}b_i & -a_i^2-a_{i+1}^2 \\ b_i^2+b_{i+1}^2 & -a_{i+1}b_{i+1}-a_{i}b_{i}\end{pmatrix},\\
\label{eq:basise3} &\begin{pmatrix} a_{i+1}b_{i+1}+a_{i}b_{i+1}+a_{i}b_i & -a_i^2-a_ia_{i+1}-a_{i+1}^2 \\ b_i^2+b_ib_{i+1}+b_{i+1}^2 & -a_{i+1}b_{i+1}-a_{i+1}b_i-a_{i}b_{i}\end{pmatrix},
 \end{align} 
together with the $2g$ hyperbolic matrices  
\begin{align}\label{eq:basisK2} \begin{pmatrix} a_{i^\ast+1}b_{i+1}+a_{i^\ast}b_i & -a_ia_{i^\ast}-a_{i+1}a_{i^\ast+1} \\ b_ib_{i^\ast}+b_{i+1}b_{i^\ast+1} & -a_{i+1}b_{i^\ast+1}-a_{i}b_{i^\ast}\end{pmatrix} \text{ for $i<i^\ast$  a pair}. \end{align}  
Observe that the $2g$ hyperbolic matrices above together with $T$ define a basis for $H_1(Y_0(p),\R)$ under the map (\ref{eq:projab}).
Recently, Doan--Kim--Lang--Tan \cite{DKLPT22} have shown that one can find \emph{minimal special fundamental polygons} $\mathcal{P}_\mathrm{min}(p)$ meaning that we have 
$$b_ib_{i^\ast}+b_{i+1}b_{i^\ast+1}=p, \quad 0\leq i\leq n,$$
which implies that in fact $\mathcal{S}(\mathcal{P}_\mathrm{min}(p))\subset \mathcal{S}(\mathcal{F}_\mathrm{Zag}(p))$. 



\section{Proof of Theorem \ref{thm:std3}}
In this section we will prove the following statement which implies our first main result. 

\begin{thm}\label{thm:std}
Fix a prime $p$ and $\delta \in (0,\tfrac{1}{2114})$. Consider a real quadratic field $K$ of discriminant $d_K $ such that $p$ splits in $K$ with $p\mathcal{O}_K=\mathfrak{p}_1\mathfrak{p}_2$. Consider a subgroup $H\leq \Cl_K^+$ with $\mathfrak{p}_1 \notin H$ and $J=(\sqrt{d_K})\notin H$. 

Then we have 
for any $\omega\in H^1(Y_0(p), \R)$ we have as $d_K \rightarrow \infty$
\begin{equation}\label{eq:weakconv} \frac{\sum_{A\in H} \langle [\mathcal{C}_{A}(p)], \omega \rangle}{|\sum_{A\in H} \langle [\mathcal{C}_{A}(p)], \omega_{E}(p) \rangle|}= -\langle v_{E}(p), \omega\rangle+O_\omega(d_K^{-\delta}), \end{equation}
where $\langle \cdot, \cdot\rangle$ denotes the cap product pairing, and $v_{E}(p),\omega_{E}(p)$ are the Eisenstein classes in (co)homology defined in (\ref{eq:eisensteinclass}) and (\ref{eq:eisensteinclasshom}).  
 \end{thm}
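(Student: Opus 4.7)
\textbf{Proof proposal for Theorem \ref{thm:std}.}

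The plan is to test the convergence against a Hecke eigen-basis, separate out the Eisenstein contribution, and bound the cuspidal contributions by subconvexity. It suffices to verify (\ref{eq:weakconv}) for $\omega$ ranging over the Hecke basis $B_{\mathrm{Hecke}}(p) = \{\omega_f^\pm : f \in \mathcal B_p\} \cup \{\omega_E(p)\}$ of $H^1(Y_0(p),\R)$, since any $\omega$ expands uniquely and the $O_\omega$ constant may depend on $\omega$. For $\omega = \omega_E(p)$ the ratio equals $\pm 1$ by construction, matching $-\langle v_E(p), \omega_E(p)\rangle = -1$ once the sign is fixed; the substantive content is therefore (i) a useful lower bound of size $\gg d_K^{1/2-\epsilon}$ for the denominator $|\sum_{A \in H}\langle [\mathcal C_A(p)], \omega_E(p)\rangle|$, and (ii) an upper bound $o(d_K^{1/2-\epsilon})$ for each cuspidal numerator $\sum_{A \in H}\langle [\mathcal C_A(p)], \omega_f^\pm\rangle$ with $f \in \mathcal B_p$.

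For the cuspidal bound, I would open up the sum over $H$ by character orthogonality on $\Cl_K^+$:
\begin{equation*}
\sum_{A \in H}\int_{\mathcal C_A(p)} \omega_f^\pm \;=\; \frac{1}{[\Cl_K^+:H]}\sum_{\chi:\, H \subset \ker\chi}\; \sum_{A \in \Cl_K^+}\chi(A)\int_{\mathcal C_A(p)} \omega_f^\pm.
\end{equation*}
Each twisted cycle integral $T_f(\chi) := \sum_A \chi(A)\int_{\mathcal C_A(p)} \omega_f^\pm$ is related, via the Waldspurger/Popa-Zhang formula for real-quadratic toric periods, to the central $L$-value $L(f \otimes \Theta_\chi, 1/2)$ of the Rankin-Selberg convolution of $f$ with the theta series attached to the ring class character $\chi$ of $K$; explicitly $|T_f(\chi)|^2 \ll_{f,p} d_K^{-1/2}\,L(f \otimes \Theta_\chi, 1/2)$. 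Subconvexity in the $d_K$-aspect (Michel-Venkatesh, or the hybrid bound of Petrow-Young) gives $L(f \otimes \Theta_\chi, 1/2) \ll_{f,p,\epsilon} d_K^{1/2-2\eta}$ for some absolute $\eta > 0$, hence $|T_f(\chi)| \ll d_K^{1/2-\eta}$. Summing over the $[\Cl_K^+:H]$ trivial-on-$H$ characters and dividing gives $|\sum_{A \in H}\int_{\mathcal C_A(p)}\omega_f^\pm| \ll_{f,p,\epsilon} d_K^{1/2-\eta+\epsilon}$, which after comparison with the Eisenstein denominator yields the $d_K^{-\delta}$ saving with $\delta$ depending on the numerical subconvexity exponent (the specific threshold $1/2114$ is tracking the best subconvexity save).

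For the Eisenstein lower bound, I would use formula (\ref{eq:DedekindEisenstein}) to rewrite $\langle [\mathcal C_A(p)], \omega_E(p)\rangle$ in terms of Rademacher symbols of $\gamma_Q$ and its conjugate $\gamma_Q'$. The sum over $A \in H$ decomposes via characters exactly as above; the Rademacher/Dedekind sums are governed by a period of the weight-$2$ Eisenstein series against the real-quadratic cycle, which via the Kronecker limit formula (or Zagier's formula for cycle integrals of $E_2$) evaluates to an explicit combination of partial $L$-values $L(0,\chi \cdot \chi_K)$ of Hecke characters of $K$ and of the level-$p$ local factors at $\mathfrak p_1, \mathfrak p_2$. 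The trivial character contributes $\sim L(0,\chi_K) \cdot (\log \epsilon_K)$-type main term of size $\gg d_K^{1/2-\epsilon}$ by Siegel's class number bound; non-trivial $\chi$ contribute lower-order terms bounded by subconvexity as above. The hypotheses $\mathfrak p_1 \notin H$ and $J \notin H$ are designed so that neither the split local factor at $p$ nor the genus-character twist by $\chi_K$-type characters trivial on $H$ kills this main term (this is the analogue of the conditions in \cite{LMY15}).

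The main obstacle is the Eisenstein lower bound: one must carry out the period computation carefully enough to isolate a main term that is demonstrably non-vanishing under exactly the stated hypotheses on $H$, and match that main term with Siegel's ineffective lower bound $L(1,\chi_K) \gg_\epsilon d_K^{-\epsilon}$ (which is the source of the ineffective constants). Concretely, one must show that the hypotheses prevent cancellation between the Rademacher contributions from $\gamma_Q$ and from $\gamma_Q'$ after summation over $H$; this is where the role of the split prime $\mathfrak p_1$ and of the different class $J$ becomes essential, since each otherwise would introduce a sign or character that forces the main term to vanish identically (as noted in the remark immediately after Theorem \ref{thm:std3}). Once both estimates are in hand, combining them and applying the reduction to the Hecke basis yields (\ref{eq:weakconv}) with any $\delta$ strictly less than the exponent coming out of the combined Eisenstein-lower-bound/cuspidal-subconvexity saving.
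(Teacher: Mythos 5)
Your overall architecture (test against the Hecke eigenbasis, open up the sum over $H$ by character orthogonality, bound the cuspidal Weyl sums via Waldspurger--Popa plus subconvexity, and lower-bound the Eisenstein denominator) is the paper's architecture, and your cuspidal estimate is essentially correct as stated: the paper uses Popa's formula $|\sum_{A}\langle[\mathcal{C}_A(p)],\omega_f\rangle\overline{\chi}(A)|^2=(4\pi^2)^{-1}d_K^{1/2}L(\pi_f\otimes\pi_\chi,1/2)$ together with Michel's bound $L(\pi_f\otimes\pi_\chi,1/2)\ll_f d_K^{1/2-1/1057}$, which is exactly where $1/2114$ comes from. (Minor slip: your period formula has $d_K^{-1/2}$ where it should be $d_K^{+1/2}$; your final bound $\ll d_K^{1/2-\eta}$ is the right one but does not follow from the formula as you wrote it.)

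The genuine gap is in the Eisenstein lower bound, where your proposed mechanism is not the right one and would fail. By Hecke's formula the twisted Eisenstein sum at level $p$ equals
\begin{equation*}
\sum_{A\in \Cl_K^+}\langle[\mathcal{C}_A(p)],\omega_E(p)\rangle\chi(A)=\frac{6}{p-1}\,(1-\chi(J))\,(\overline{\chi(A_p)}-1)\,L(\chi,0),
\end{equation*}
where $A_p=\mathfrak{p}_1$. The trivial character therefore contributes \emph{zero} (both factors $(1-\chi(J))$ and $(\overline{\chi(A_p)}-1)$ vanish), so there is no ``$L(0,\chi_K)\cdot\log\epsilon_K$ main term from the trivial character.'' The entire main term comes from the nontrivial characters with $\chi_{|H}=1$ and $\chi(J)=-1$, each contributing a quantity of size $\asymp d_K^{1/2}L(\chi,1)$ --- the same order as your putative main term --- and these cannot be dismissed as error terms bounded by subconvexity: the relevant $L$-values sit at $s=0$ (equivalently $s=1$), not at the central point, and a termwise upper bound of $d_K^{1/2+\eps}$ per character would swamp everything. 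The actual argument is a positivity argument: self-duality of class group characters and the functional equation give $L(\chi,0)=\pi^{-2}d_K^{1/2}L(\chi,1)>0$, while $(1-\chi(J))(1-\Re\chi(A_p))\geq 0$, so all terms carry the same (negative) sign and reinforce; the hypotheses $J\notin H$ and $\mathfrak{p}_1\notin H$ then guarantee that $\sum_{\chi_{|H}=1,\,\chi(J)=-1}(1-\Re\chi(A_p))\gg|\Cl_K^+|/|H|$, yielding $\sum_{A\in H}\langle[\mathcal{C}_A(p)],\omega_E(p)\rangle\leq -c_\eps d_K^{1/2-\eps}/p$ via the standard (ineffective) lower bound for $L(\chi,1)$. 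This definite negative sign is also what produces the minus sign in $-\langle v_E(p),\omega\rangle$; your remark that the ratio at $\omega=\omega_E(p)$ ``equals $\pm 1$ once the sign is fixed'' elides the fact that determining that sign is precisely the content of the lower-bound proposition.
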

First of all let us see how Theorem \ref{thm:std3} follows from this.

\begin{proof}[Proof of Theorem \ref{thm:std3} assuming Theorem \ref{thm:std}]
Let $B$ be any basis of $V_p=H_1(Y_0(p),\R)$ containing $v_E(p)$ and let $ B^{\ast}$ denote the dual basis of $H^1(Y_0(p),\R)$ with respect to the cap product pairing. Consider the isomorphism $V_p\cong \R^{2g+1}$ defined by sending $B$ to the standard basis of $\R^{2g+1}$ and denote by $|\!|\cdot |\!|=|\!|\cdot |\!|_{B,\infty}$ the norm on $V_p$ obtained by pulling back the sup norm with respect to the standard basis of $\R^{2g+1}$. We will prove the convergence (\ref{eq:convergenceformulation}) which implies Theorem \ref{thm:std3}. By Theorem \ref{thm:std}, we have for $d_K$ large enough that
\begin{align}
|\!|\sum_{A\in H}[\mathcal{C}_{A}(p)]|\!|=\max_{\omega \in  B^{\ast}} |\langle \sum_{A\in H}  [\mathcal{C}_{A}(p)], \omega\rangle |&= \left| \sum_{A\in H} \langle [\mathcal{C}_{A}(p)], \omega_{E}(p) \rangle \right| \max_{\omega \in  B^{\ast}} \left(|\langle v_{E}(p), \omega\rangle |+O_\omega(d_K^{-\delta})\right)\\
\label{eq:supnormcalc}&= \left| \sum_{A\in H} \langle [\mathcal{C}_{A}(p)], \omega_{E}(p) \rangle \right| (1+O_B(d_K^{-\delta})).
\end{align}
By the triangle inequality, we conclude that
\begin{align}\label{eq:translationtonorm}
\left|\!\left|\frac{\sum_{A\in H}[\mathcal{C}_{A}(p)]}{|\!|\sum_{A\in H}[\mathcal{C}_{A}(p)]|\!|}+v_{E}(p) \right|\!\right|
\leq \left|\!\left|\frac{\sum_{A\in H}[\mathcal{C}_{A}(p)]}{|\langle \sum_{A\in H}  [\mathcal{C}_{A}(p)], \omega_E(p)\rangle|}+v_{E}(p) \right|\!\right|+ \left| \frac{|\!|\sum_{A\in H}[\mathcal{C}_{A}(p)]|\!|}{|\langle \sum_{A\in H}  [\mathcal{C}_{A}(p)], \omega_E(p)\rangle|}-1\right|.
\end{align}
Finally by (\ref{eq:supnormcalc}) and Theorem \ref{thm:std}, the above is bounded by $O_B(d_K^{-\delta})$ which yields the wanted expression.   
\end{proof}


The rest of this section is occupied with the proof of Theorem \ref{thm:std}. The idea is to do a change of coordinates to the Hecke basis (\ref{eq:Heckebasis}) of the real cohomology. Then using formulas due to Hecke and Waldspurger (more precisely an explicit extension due in this case to Popa \cite{Popa08}), we reduce the problem to a question about certain special values of $L$-function. Now the result follows upon applying subconvexity bound (as well as lower bounds for $L$-functions on the critical line). This is exactly the same proof structure as is used in the automorphic approach to Duke's Theorem (see e.g. \cite{MichelVenk06} and the references therein). 

Given $\omega \in H^1(Y_0(p), \R)$, we obtain by expanding in the Hecke basis for homology defined in (\ref{eq:Heckebasisdual}) that 
\begin{align*}
\langle [\mathcal{C}_{A}(p)], \omega \rangle&=\langle v_{E}(p), \omega \rangle \langle [\mathcal{C}_{A}(p)], \omega_{E}(p) \rangle + \sum_{f\in \mathcal{B}_p, \pm} \langle v_f^{\pm}, \omega\rangle \langle  [\mathcal{C}_{A}(p)], \omega_f^{\pm} \rangle.
\end{align*}  	
   
Now we want to average over cosets $CH \subset \Cl_K^+$ for subgroups $H\leq \Cl_K^+$ of the narrow class group. Using a standard trick from Fourier analysis, we can write:
\begin{align}\nonumber  \sum_{A\in CH} \langle [\mathcal{C}_{A}(p)], \omega \rangle &= \frac{|H|}{|\Cl_K^+|}\sum_{\chi \in \widehat{\Cl_K^+}: \chi_{|H}=1 } \chi(C)\sum_{A\in \Cl_K^+} \langle [\mathcal{C}_{A}(p)], \omega \rangle  \overline{\chi}(A) \\
\nonumber &= \langle v_{E}(p), \omega\rangle \sum_{A\in CH} \langle [\mathcal{C}_{A}(p)], \omega_{E}(p) \rangle  \\
\label{eq:Wald1}&+\sum_{f\in \mathcal{B}_p, \pm} \langle v_f^{\pm}, \omega\rangle \frac{|H|}{|\Cl_K^+|}\sum_{\chi \in \widehat{\Cl_K^+}: \chi_{|H}=1 } \chi(C)\sum_{A\in \Cl_K^+} \langle [\mathcal{C}_{A}(p)], \omega_f^{\pm} \rangle \overline{\chi}(A)  \end{align}
The inner sums in the cuspidal contribution are twisted Weyl sums, which we will have to estimate. These sums fits into the frame work of \emph{toric periods} (see e.g. \cite[Section 4]{EiLiMiVe11}) and they turn out to be related to special values of automorphic $L$-functions as firstly proved by Waldspurger \cite{Waldspurger85}. This is exactly the reason why we have changed to the Hecke coordinates.
\subsection{The Eisenstein contribution}
In order to estimate the Eisenstein contribution we will rely on the following classical formula of Hecke (see \cite[(68)]{DuImTo18});
$$\sum_{A\in \Cl_K^+} \int_{\mathcal{C}_{A}(p)} E^*_2(z)dz\,  \chi(A)= 6(1-\chi(J))L(\chi,0) ,$$
where $E^*_2(z)=E_2(z)-\frac{3}{\pi \Im z}$ (with $E_2(z)$ defined in (\ref{eq:E2})), $J=(\sqrt{D})\in \Cl_K^+$ is the different of $K$, and $L(\chi,s)$ denotes the (finite part) of the Hecke $L$-function associated to the narrow class group character $\chi$. Thus by a change of variable we get  
$$\sum_{A\in \Cl_K^+} \int_{\mathcal{C}_{A}(p)} pE^*_2(pz)dz \, \chi(A)=\sum_{A\in \Cl_K^+} \int_{p\,\mathcal{C}_{A}(p)} E^*_2(z)dz \, \chi(A) .$$
We notice by direct computation that $p\,\mathcal{C}_{A}(p)$ (i.e. the dilation of $\mathcal{C}_{A}(p)$ by the factor $p$) is again a closed geodesic as follows; if $\mathcal{C}_{A}(p)$ corresponds to the quadratic form $ax^2+bxy+cy^2$ of discriminant $d_K $ and level $p$, then $p\,\mathcal{C}_{A}(p)$ corresponds to $\tfrac{a}{p}x^2+bxy+cpy^2$ (which now might not be of level $p$). Recall that in Section \ref{sec:geodesic} we fixed a residue $r\modulo 2p$ such that $r^2\equiv d_K\modulo 4p$. One can now check by direct computation on ideals that 
$$\left(\Z \frac{a}{p} + \Z \frac{b - \sqrt{d_K}}{2}\right) \left(\Z p + \Z \frac{r - \sqrt{d_K}}{2}\right) = \Z a + \Z \frac{b - \sqrt{d_K}}{2}$$
if $a > 0$, and similarly
$$\left(\Z \left(-\frac{a}{p}\sqrt{d_K}\right) + \Z \frac{d - b\sqrt{d_K}}{2}\right) \left(\Z  p + \Z \frac{r - \sqrt{d_K}}{2}\right) = \Z (-a\sqrt{d_K}) + \Z \frac{d - b\sqrt{d_K}}{2}$$
if $a < 0$. This implies that when projected to the modular curve $\PSL_2(\Z)\backslash \Hb$ of level $1$, we have  
$$ p\, \mathcal{C}_{A}(p)=\mathcal{C}_{AA_p}(1),$$
where $A_p=[p, \frac{r-\sqrt{d_K}}{2}]\in \Cl_K^+$ (using the notation (\ref{eq:alphabeta})). This implies that
\begin{align}\nonumber \sum_{A\in \Cl_K^+} \langle [\mathcal{C}_{A}(p)],\omega_{E}(p)\rangle \chi(A)&=\sum_{A\in \Cl_K^+} \int_{\mathcal{C}_{A}(p)} E_{2,p}(z)dz\,\, \chi(A)\\
\label{eq:Eisformp}&= \frac{6}{p-1}(1-\chi(J))(\overline{\chi(A_p)}-1)L(\chi,0), \end{align}
using that $(p-1)E_{2,p}(z)=pE_{2}^*(pz)-E_{2}^*(z)$. Note that the above vanishes when $\chi(J)=1$ or $\chi(A_p)=1$. 

By the functional equation for class group $L$-functions \cite[p. 13]{DuImTo18} and the fact class group characters are self-dual (i.e. $\chi\circ c=\overline{\chi}$ where $c$ denotes complex conjugation) we conclude 
\begin{equation}\label{eq:selfdual} \overline{L(\chi,0)}=L(\overline{\chi},0)=L(\chi\circ c,0)=L(\chi,0)=\pi^{-2} d_K^{1/2} L(\chi,1)>0.,\end{equation}
By standard estimates for $L$-functions on the critical line we have that for all $\eps>0$ there is some uniform (but inefficient) constant $c_\eps>0$ such that
\begin{equation}\label{eq:Siegel} L(\chi,0)\geq c_\eps d_K^{1/2-\eps}.\end{equation}


\begin{prop}\label{prop:lowerbound} Let $H\leq \Cl_K^+$ be a subgroup such that $A_p\notin H$ and $J\notin H$. Then for each $\eps>0$ there exists a constant $c_\eps>0$ such that  
\begin{equation}\label{eq:lowerbound}  \sum_{A\in H} \langle [\mathcal{C}_{A}(p)], \omega_{E}(p) \rangle \leq -c_\eps \frac{d_K^{1/2-\eps}}{p}.  \end{equation}
\end{prop}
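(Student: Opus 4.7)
The strategy is to apply Fourier inversion over $\Cl_K^+/H$ to the Hecke identity \eqref{eq:Eisformp}, which yields
$$S := \sum_{A\in H}\langle [\mathcal{C}_A(p)], \omega_E(p)\rangle = \frac{6}{(p-1)[\Cl_K^+:H]} \sum_{\chi|_H = 1} (1-\chi(J))(\overline{\chi(A_p)}-1)L(\chi, 0),$$
and then to exploit two structural features of the problem: that $J = (\sqrt{d_K})$ is $2$-torsion in $\Cl_K^+$, and that $S$ is real.

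First I would observe that $J^2 = (d_K)$ is principal with positive generator, so $\chi(J) \in \{\pm 1\}$ for every class group character; the factor $(1-\chi(J))$ therefore collapses to $0$ or $2$, restricting the sum to the coset $\widehat{\Cl_K^+/H}^{-} := \{\chi|_H = 1 : \chi(J) = -1\}$. The hypothesis $J \notin H$ makes this coset non-empty, of cardinality exactly $[\Cl_K^+:H]/2$, and---crucially---consisting entirely of non-trivial characters, so that the ineffective Siegel-type bound \eqref{eq:Siegel} applies to each $L(\chi,0)$. Since $S$ is real (as is visible from \eqref{eq:DedekindEisenstein}), one may then take real parts to arrive at
$$-S = \frac{12}{(p-1)[\Cl_K^+:H]} \sum_{\chi \in \widehat{\Cl_K^+/H}^{-}} \bigl(1 - \Re\chi(A_p)\bigr)\, L(\chi, 0),$$
in which every summand is non-negative; this already establishes the sign $S \leq 0$.

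For the quantitative lower bound, I would insert $L(\chi, 0) \geq c_\eps d_K^{1/2-\eps}$ termwise and evaluate the combinatorial weight $\sum_{\chi}(1 - \Re\chi(A_p))$ by orthogonality. Writing the coset $\widehat{\Cl_K^+/H}^{-}$ as $\chi_0\cdot \widehat{\Cl_K^+/\langle H, J\rangle}$ for a fixed $\chi_0 \in \widehat{\Cl_K^+/H}^{-}$, the sum $\sum\chi(A_p)$ vanishes if $A_p\notin \langle H, J\rangle$ and equals $-[\Cl_K^+:H]/2$ if $A_p \in \langle H,J\rangle$ (in which case the assumption $A_p \notin H$ forces $A_pJ \in H$, so that $\chi(A_p) = \chi(J) = -1$ uniformly). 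In either case one obtains $\sum_\chi(1-\Re\chi(A_p)) \geq [\Cl_K^+:H]/2$, and substituting back gives the claimed $-S \gg_\eps d_K^{1/2-\eps}/(p-1)$.

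The only non-routine step is the first one: recognizing that exploiting the $2$-torsion of $J$ to halve the character sum, and then taking real parts, converts the a priori complex summands into manifestly non-negative real numbers, thereby removing any potential cancellation between phases. Once this reduction is in place, the remainder of the argument is a direct application of Siegel's bound together with orthogonality, with no need for subconvexity or more sophisticated harmonic analysis at this step.
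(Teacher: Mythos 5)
Your proof is correct and follows essentially the same route as the paper: expand over characters trivial on $H$, apply the Hecke/Eisenstein period formula, restrict to the coset $\chi(J)=-1$ using that $J$ is $2$-torsion, exploit positivity of $(1-\Re\chi(A_p))L(\chi,0)$, insert the Siegel-type lower bound termwise, and evaluate the residual character sum by orthogonality. The only cosmetic difference is that you evaluate $\sum_\chi \chi(A_p)$ by translating the coset $\widehat{\Cl_K^+/H}^-$ by a fixed $\chi_0$, whereas the paper does it by inclusion-exclusion over $\widehat{\Cl_K^+/H}$ and $\widehat{\Cl_K^+/H'}$; you also make explicit the useful remark that $\chi(J)=-1$ forces $\chi$ non-trivial, which the paper leaves implicit.
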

\begin{proof}
By orthogonality, the Hecke formula, and equations (\ref{eq:selfdual}) and (\ref{eq:Siegel}), we conclude 
 \begin{align} \label{eq:genuslowerbnd} \sum_{A\in H} \langle [\mathcal{C}_{A}(p)], \omega_{E}(p) \rangle&=\frac{|H|}{|\Cl_K^+|}\sum_{\chi\in \widehat{\Cl_K^+}: \chi_{|H}=1}\sum_{A\in \Cl_K^+} \langle [\mathcal{C}_{A}(p)], \omega_{E}(p) \rangle\chi(A)\\
 &=\frac{12|H|}{(p-1)|\Cl_K^+|}\sum_{\substack{\chi\in \widehat{\Cl_K^+}:\\ \chi_{|H}=1,\chi(J)=-1}}\sum_{A\in \Cl_K^+} (1-\Re \chi(A_p))L(\chi,0)\\
 &\leq -c_\eps \frac{d_K^{1/2-\eps}}{p}\frac{|H|}{|\Cl_K^+|}\sum_{\substack{\chi\in \widehat{\Cl_K^+}:\\ \chi_{|H}=1,\chi(J)=-1}} (1-\Re \chi(A_p)),  \end{align}
 where we are using that $(1-\Re \chi(A_p))\geq 0$. Now we observe that if $H':=\langle H,J\rangle\leq \Cl_K^+$ denotes the group generated by $H\leq \Cl_K^+$ and $J=(\sqrt{d_K})$ then we can write 
\begin{align} \sum_{\substack{\chi\in \widehat{\Cl_K^+}:\\ \chi_{|H}=1,\chi(J)=-1}} (1-\Re \chi(A_p))
&= \frac{|\Cl_K^+|}{|H'|}
-\Re \left( \sum_{\substack{\chi\in \widehat{\Cl_K^+/H}}} \chi(A_p H)-\sum_{\substack{\chi\in \widehat{\Cl_K^+/H'}}}\chi(A_p H')  \right)\\
&= \begin{cases} \tfrac{|\Cl_K^+|}{2|H|},& A_p\in H'\\
      \frac{|\Cl_K^+|}{|H|},& A_p\notin H'\end{cases}.\end{align}
Here we are using that $J\notin H$, which implies that $|H'|=2|H|$, and  $A_p\notin H$. Inserting this into the above yields the wanted estimate.
 \end{proof}
\subsection{The cuspidal contribution}
In the case of cuspidal Weyl sums, we will use the following formula due to Popa \cite[Theorem 6.3.1]{Popa06} (see also the first formula in the introduction of \cite{Popa06});
\begin{align*}
\left|\sum_{A\in \Cl_K^+} \langle [\mathcal{C}_{A}(p)], \omega_f \rangle \overline{\chi}(A)\right|^2=(4\pi^2)^{-1}d_K^{1/2}L(\pi_f\otimes \pi_\chi,1/2), 
\end{align*}
where $\pi_f$ denotes the $\GL_2$-automorphic representation associated to $f$, $\pi_\chi$ denotes the $\GL_2$-automorphic representation associated to $\chi$ via automorphic induction and $L(\pi_f\otimes \pi_\chi,1/2)$ denotes the central value of the (finite part of the) Rankin--Selberg $L$-function of $\pi_f$ and $\pi_\chi$. This implies
\begin{align}\label{eq:Wald2}
\sum_{A\in \Cl_K^+} \langle [\mathcal{C}_{A}(p)], \omega_f^{\pm} \rangle \overline{\chi}(A)
= \frac{d_K^{1/4}}{2}\left( \epsilon_{f,\chi} |L(\pi_f\otimes \pi_\chi,1/2)|^{1/2}\pm \epsilon_{f,\overline{\chi}} |L(\pi_f\otimes \pi_{\overline{\chi}},1/2)|^{1/2}  \right),
\end{align}    
with $|\epsilon_{f,\chi}|=|\epsilon_{f,\overline{\chi}}|=1$. By the subconvexity bound 
$$L(\pi_f\otimes \pi_{\chi},1/2)\ll_f d_K^{1/2-1/1057},$$ 
due to Michel \cite[Theorem 2]{Michel04}, we conclude the following bound on the twisted Weyl sums; 
\begin{equation}\label{eq:Weylbnd}\sum_{A\in \Cl_K^+} \langle [\mathcal{C}_{A}(p)], \omega_f^{\pm} \rangle \overline{\chi}(A)\ll_f d_K^{1/2-1/2114}.\end{equation}

Combining this with the lower bound for the Eisenstein contribution yields the proof of our main result.
\begin{proof}[Proof of Theorem \ref{thm:std}] Let $H\leq \Cl_K^+$ be a subgroup as in the statement. Then by Proposition \ref{prop:lowerbound} we have for all $\eps>0$ that there exists an absolute constant $c_\eps>0$ such that  
\begin{equation}\label{eq:dividingthr}\left|\sum_{A\in H} \langle [\mathcal{C}_{A}(p)], \omega_{E}(p) \rangle\right|=-\sum_{A\in H} \langle [\mathcal{C}_{A}(p)], \omega_{E}(p) \rangle\geq c_\eps  d_K^{1/2-\eps}p^{-1}.\end{equation}
Now the result follows from (\ref{eq:Wald1}) (with $C=1$) by dividing through by (\ref{eq:dividingthr}) and bounding the cuspidal contribution using (\ref{eq:Weylbnd}). Here we are using that the number of narrow class group characters $\chi$ such that $\chi_{|H}=1$ is exactly $\tfrac{|\Cl_K^+|}{|H|}$, and the remaining terms in (\ref{eq:Wald1}) do not depend on $d_K$.\end{proof}

\section{A homological version of the sup norm problem}\label{sec:amplpretrace}
In this section we will deal with the problem of obtaining a version of Theorem \ref{thm:std} where the level $p$ is allowed to vary. By (\ref{eq:Wald1}) and (\ref{eq:Wald2}) this requires first of all bounds for $L(\pi_f\otimes \pi_{\chi},1/2)$ in terms of both $d_K $ and $p$ which in the case of genus characters has been studied by Petrow and Young \cite{PeYo19}. Secondly we will need estimates for the cap product pairings $\langle v_f^{\pm}, \omega\rangle$ (with $f\in \mathcal{B}_p$) in terms of $p$. This can be thought of as an analogue of the \emph{sup norm problem} from \emph{arithmetic quantum chaos} (see e.g. \cite{BlomerHolo10}) as we will explain below. We will have to restrict to certain compatible families of $\omega\in H^1(Y_0(p),\R)$,  which in our case we will be dual bases of basic bases of level $p$ as in Definition \ref{defi:bsc}.

\begin{thm}\label{thm:maincontr}
Let $p$ be prime and let ${B}\subset H_1(Y_0(p),\Z)$ be a basic basis of level $p$. Then for $\omega\in  B^{\ast}$ (the dual basis of $B$ with respect to (\ref{eq:cap})) we have
\begin{equation}\label{eq:2ndmomentboundMin}\sum_{\epsilon\in \{\pm\}}\sum_{f\in \mathcal{B}_p} | \langle v_f^{\epsilon}, \omega\rangle|^2 \ll_\eps p^{1+\eps} .\end{equation}
\end{thm}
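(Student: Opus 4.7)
My plan is to reduce the second moment bound to a bound on the Petersson norm of the cuspidal part of $\omega$, and then attack this via a Bergman-kernel/pretrace argument.

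\medskip

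\noindent\textbf{Step 1 (spectral identification).} By the Franke splitting \eqref{eq:franke}, any $\omega\in H^{1}(Y_{0}(p),\R)$ can be written uniquely as $\omega = \omega_{h_{\omega}}+\overline{\omega_{h_{\omega}}}+c_{\omega}\omega_{E}(p)$ with $h_{\omega}\in\mathcal{S}_{2}(p)$ and $c_{\omega}\in\R$. Expanding $h_{\omega} = \sum_{f\in\mathcal{B}_{p}}\alpha_{f}(\omega) f$ in the Hecke basis and using the identity $\omega_{f}=\omega_{f}^{+}+i\,\omega_{f}^{-}$, a short computation gives $\langle v_{f}^{+},\omega\rangle = 2\Re\alpha_{f}(\omega)$ and $\langle v_{f}^{-},\omega\rangle = -2\Im\alpha_{f}(\omega)$, so the LHS of \eqref{eq:2ndmomentboundMin} equals $4\sum_{f}|\alpha_{f}(\omega)|^{2}$. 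Combining Parseval in the Hecke basis (orthogonality under the Petersson inner product) with the Hoffstein--Lockhart lower bound $\|f\|_{\mathrm{Pet}}^{2}\gg_{\varepsilon}p^{1-\varepsilon}$ yields
$$\sum_{f,\pm}|\langle v_{f}^{\pm},\omega\rangle|^{2}\ll_{\varepsilon} p^{-1+\varepsilon}\,\|h_{\omega}\|_{\mathrm{Pet}}^{2},$$
reducing the claim to proving $\|h_{\omega_{\sigma}}\|_{\mathrm{Pet}}^{2}\ll_{\varepsilon} p^{2+\varepsilon}$ for each $\omega_{\sigma}\in B^{\ast}$.

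\medskip

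\noindent\textbf{Step 2 (period equations).} Write $h_{\sigma}:=h_{\omega_{\sigma}}$ and $I_{\tau}(f):=2\pi i\int_{z_{\tau}}^{\tau z_{\tau}}f\,dz$. Pairing the Hecke decomposition of $\omega_{\sigma}$ against each $v_{\tau}\in B$ and using the dual basis relation $\langle v_{\tau},\omega_{\sigma}\rangle=\delta_{\sigma\tau}$ yields the linear system
$$2\Re I_{\tau}(h_{\sigma}) = \delta_{\sigma\tau}-c_{\omega_{\sigma}}\langle v_{\tau},\omega_{E}(p)\rangle, \qquad \tau\in B.$$
By the Rademacher-symbol formula \eqref{eq:DedekindEisenstein} together with the estimate \eqref{eq:BoundEisenstein}, one checks case by case that $|\langle v_{\tau},\omega_{E}(p)\rangle|\ll 1$ for $\tau=\{z,\sigma z\}$ with $\sigma\in\mathcal{S}(\mathcal{F}_{\mathrm{Zag}}(p))$, and the Eisenstein coefficient $c_{\omega_{\sigma}}$ is likewise $O(1)$. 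Hence the RHS vector has $\ell^{2}$-norm $O(\sqrt{p})$, and if $P\colon \mathcal{S}_{2}(p)_{\R}\to\R^{B}$ denotes the real period map $h\mapsto(2\Re I_{\tau}(h))_{\tau}$ (with target modded out by the automatic identity $\Re I_{v_{E}(p)}\equiv 0$ on cusp forms), then
$$\|h_{\sigma}\|_{\mathrm{Pet}}^{2}\le \|P^{-1}\|_{\mathrm{op}}^{2}\cdot O(p).$$

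\medskip

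\noindent\textbf{Step 3 (the main estimate).} It thus suffices to prove $\|P^{-1}\|_{\mathrm{op}}^{2}\ll_{\varepsilon} p^{1+\varepsilon}$, equivalently $s_{\min}(P)\gg_{\varepsilon} p^{-1/2-\varepsilon}$. My plan is to express the functionals $\Re I_{\tau}$ via their Riesz representatives, the cusp forms $Q_{\tau}(z):=-2\pi i\int_{z_{\tau}}^{\tau z_{\tau}}K(z,w)\,d\bar w$ with $K(z,w)=\sum_{f}f(z)\overline{f(w)}/\|f\|_{\mathrm{Pet}}^{2}$ the Bergman reproducing kernel. The Gram matrix of $\{\Re Q_{\tau}\}_{\tau}$ controls the singular values of $P$, and its entries are expanded by the Petersson trace formula as Poincar\'e series over $\Gamma_{0}(p)$-orbits. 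Diagonal contributions are handled by the trivial bound of Corollary~\ref{lem:relations} and the $O(\log p)$ hyperbolic length of the basic basis arcs (from the explicit form of $\mathcal{S}(\mathcal{F}_{\mathrm{Zag}}(p))$). The main obstacle---the genuine ``homological sup norm problem'' alluded to in the section heading---is obtaining sufficient cancellation in the off-diagonal Kloosterman sums $S(m,n;cp)$, which I plan to treat via an amplified pretrace argument in the spirit of the classical Iwaniec--Sarnak sup-norm bound, using Weil's bound on each Kloosterman sum together with a Hecke amplifier to ensure the estimate is uniform in the level $p$.
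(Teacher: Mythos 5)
Your Steps 1--2 are a correct reduction, but they reduce the theorem to a \emph{strictly stronger} statement than the one being proved, and Step 3 --- the entire content of that stronger statement --- is only a plan, and one that cannot work as described. Concretely: the theorem asks for a bound on the $\ell^2$-norm of each \emph{row} of the inverse period matrix (the vector $(\langle v_f^{\epsilon},\omega\rangle)_{f,\epsilon}$ for a single $\omega\in B^{\ast}$), whereas $s_{\min}(P)\gg_\eps p^{-1/2-\eps}$ is a bound on the \emph{operator norm} of that inverse, i.e.\ a uniform lower bound $\sum_{\tau\in B}|\Re I_\tau(h)|^2\gg_\eps p^{-\eps}$ valid for \emph{every} unit-Petersson-norm cusp form $h$, not just for Hecke eigenforms. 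Nothing in the paper establishes this, and it is not clear it is true: linear combinations of eigenforms could conspire to make all basic-basis periods small even while each eigenform individually has large periods.

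The proposed mechanism for Step 3 cannot deliver such a bound. A Petersson/pretrace expansion of the Gram matrix of the Riesz representatives $\Re Q_\tau$ controls traces and individual entries, but a lower bound on the smallest eigenvalue of a $\sim p\times p$ Gram matrix requires much more: even if every off-diagonal entry enjoyed square-root cancellation in its Kloosterman sums (say size $O(p^{1/2+\eps})$ against a diagonal of size $O(\log p)^2$ or so), each row has $\sim p$ off-diagonal entries, so Gershgorin-type diagonal dominance fails by a large power of $p$; and an amplifier only sharpens \emph{upper} bounds for a single distinguished form, it cannot certify uniform positive-definiteness. This is essentially the obstruction the paper flags when it says the amplified pretrace approach is ineffective here because the products $\langle v,\omega_f^{\pm}\rangle\langle v_f^{\pm},v^{\ast}\rangle$ have no positivity. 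The paper's actual route restores positivity by a theta-type argument: it shows $g(z)=\sum_n\langle T_n v_x^{\bm{\epsilon}},\omega\rangle e^{2\pi inz}$ is a cusp form whose Petersson norm equals $\sum_f\langle f,f\rangle_{\mathrm{Pet}}|\langle v_x^{\bm{\epsilon}},\omega_f^{\epsilon_1}\rangle|^2|\langle v_f^{\epsilon_1},\omega\rangle|^2$ (manifestly positive), bounds that norm geometrically by unfolding over $\Gamma_0^{\ast}(p)$ and controlling $\langle T_n\{\tfrac{x}{p},\infty\},\mathbb{P}_{\mathrm{cusp}}\omega\rangle$ via geometric coding and Eichler's intersection bound, and then divides out by an \emph{averaged} lower bound for $\tfrac{1}{p}\sum_x|\langle\{\tfrac{x}{p},\infty\},\omega_f^{\epsilon}\rangle|^2$ obtained from a first moment of $L(f,\chi,1/2)$ over characters mod $p$ via Birch--Stevens. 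If you want to salvage your framework, you would need to replace the operator-norm bound on $P^{-1}$ by a row-by-row argument with built-in positivity --- which is exactly what the theta lift provides.
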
   

See Remark \ref{rem:conj} below for thoughts on the optimal bound that one can expect. Notice that since the newforms $f\in \mathcal{B}_p$ are Hecke normalized, the dual vectors $v_f^{\pm}$ are very subtle quantities as they are related to the \emph{minimal periods} $c_f^{\pm}$ of $f$ (characterized when $f$ has rational coefficients by $(c_f^{+})^{-1} \Re f(z)dz\in H^1(X_0(p), \Z)$ being primitive and similarly for $c_f^{-}$). Upper bounding $c_f^{\pm}$ in terms of the level is extremely hard as any polynomial bound implies (a weak form of) the ABC-conjecture (see \cite{Goldfeld02}). 

To put Theorem \ref{thm:maincontr} into perspective it is again useful to compare to the imaginary quadratic analogue of supersingular reduction of CM elliptiuc curves. Recall that in Secftion \ref{sec:CM} we defined an $n$-dimensional vector space $H_0(X^{p,\infty},\R)$ having a canonical basis $e_1,\ldots, e_n$ corresponding to the connected components of the conic curve $X^{p,\infty}$ as defined in (\ref{eq:conic}) which in turn can be identified with isomorphism classes of supersingular elliptic curves defined over $\mathbb{F}_{p^2}$. There is a canonical bilinear form $\langle \cdot, \cdot \rangle_\mathrm{ss}$ on $H_0(X^{p,\infty},\R)$ given by 
\begin{align}\label{eq:ss}\langle e_i, e_j \rangle_\mathrm{ss}:= \delta_{i,j}w_i,\end{align} 
where $w_i$ is the size of the endomorphism ring of the elliptic curves corresponding to $e_i$. The homology group $H_0(X^{p,\infty},\R)$ carries a natural action of the Hecke algebra (of level $p$) defined via correspondences and these linear operators are self-adjoint with respect to the bilinear form $\langle \cdot, \cdot \rangle_\mathrm{ss}$. The homology group  $H_0(X^{p,\infty},\R)$ is (by the Jacquet--Langlands correspondence) isomorphic as a Hecke algebra to the space $\mathcal{M}_2(p)$ of holomorphic modular forms of level $p$ and weight 2. In particular, one can associate to each Hecke eigenform $f$ a unique element $e_f\in H_0(X^{p,\infty},\R)$ such that $\langle e_f, e_f \rangle_\mathrm{ss}=1$ and $T_\ell\,  e_f=\lambda_f(\ell)e_f$ where $\lambda_f(\ell)$ denotes the $\ell$-th Hecke eigenvalues of $f$ with $\ell\neq p$ prime. In this setting, we are interested in upper bounds for the {\lq\lq}$L^r$-norms{\rq\rq};
$$ |\!|e_f|\!|_r:=\left(\sum_{i=1}^n |\langle e_f, e_i \rangle_\mathrm{ss}|^r\right)^{1/r}, 1\leq r< \infty,\quad |\!|e_f|\!|_\infty:=\max_{1\leq i\leq n}|\langle e_f, e_i \rangle_\mathrm{ss}|.  $$
Putting $r=2$ and using Parseval, one obtains $|\!|e_f|\!|_2=1$ which implies the trivial (or convexity) bound $|\!|e_f|\!|_r\leq 1 $ for all $r\geq 2$. In the case $r=\infty$ Blomer and Michel \cite{BlomerMichel11} were the first to go beyond this by proving $|\!|e_f|\!|_\infty\ll p^{-1/6+\eps}$. This was recently improved by Khayutin--Nelson--Steiner \cite[Corollary 2.3]{KhayutinNelsonSteiner22} who obtained $|\!|e_f|\!|_\infty\ll p^{-1/4+\eps}$. Liu--Masri--Young \cite[Proposition 1.12]{LMY15} obtained the $L^4$-bound $|\!|e_f|\!|_4\ll p^{-1/8+\eps}$. 
 
The fact that the Hecke operators are self-adjoint with respect to the bilinear form (\ref{eq:ss}) on $H_0(X^{p,\infty},\R)$ implies that the dual basis 
$$e_1^\ast,\ldots, e_n^\ast\in H_0(X^{p,\infty},\R)^\ast,$$ 
of $e_1,\ldots, e_n$ (which we can identify with $w_1^{-1}\langle \cdot, e_1\rangle_\mathrm{ss},\ldots , w_n^{-1}\langle \cdot, e_n\rangle_\mathrm{ss}$) have the same behavior under the action of the Hecke algebra. This is why in the proof of the level aspect version (\ref{eq:weakmichel2}) by Liu--Masri--Young one does not need any non-trivial input to bound the factors $\langle e_f, e_i\rangle_\mathrm{ss}$ appearing when spectrally expanding; one can simply employ the trivial Parseval bound $\sum_{f\in \mathcal{B}_p} |\langle e_f, e_i\rangle_\mathrm{ss}|^2\leq 1$.This is not necessarily the case for the basic bases ${B}\subset H_1(Y_0(p), \R)$ in our setting. This means in particular that we do not know whether
\begin{equation}\label{eq:nonpos}\langle v , \omega_f^{\pm}\rangle \langle v_f^{\pm}, v^\ast  \rangle ,\end{equation}
 is positive or not for $v\in {B}$ and $v^\ast\in  B^{\ast}$ such that $\langle v, v^\ast \rangle=1$. This lack of positivity makes it hard to obtain any bound at all.               
 
\subsection{The method of proof}
The first natural approach to bounding the left hand side of (\ref{eq:2ndmomentboundMin}) would be to use a version of the amplified pre-trace formula approach to the sup norm problem in arithmetic quantum chaos (see e.g. \cite{BlomerHolo10}). This is however not very effective due to the possible non-positivity of (\ref{eq:nonpos}) (i.e. we are working in a Banach space rather than a Hilbert space). On the other hand, it has become apparent from the work of Steiner \cite{Steiner20}, Khayutin--Steiner \cite{KhayutinSteiner20}, and Khayutin--Nelson--Steiner \cite{KhayutinNelsonSteiner22} that one in many cases can obtain very strong sup norm bounds by using the theta correspondence. We will employ a version of this argument in our setting. The basis fact is that for $v_0\in H_1(X_0(p),\R)$ and $\omega\in H^1(X_0(p),\R)$	
\begin{equation}\label{eq:gtheta}\sum_{n\geq 1} \langle T_n v_0, \omega\rangle e^{2\pi i nz},\end{equation}
defines a cusp form of level $p$. This follows easily by the fact that the $\pm 1$ isotypic components of $\iota$ acting on $H_1(X_0(p), \R)$ are isomorphic as Hecke modules to the space of holomorphic cusp forms $\mathcal{S}_2(p)$ of weight 2 and level $p$ (which is an incarnation of the Eichler--Shimura isomorphism \cite[Section 8.2]{Sh94}). This can be seen as an instance of the theta correspondence (see \cite[Section 5.3]{DaHaRoVe21}). We will give a simple proof below of the exact statement that we need. Now the key fact is that if $\iota v_0=\pm v_0$ then the $L^2$-norm of (\ref{eq:gtheta}) is (by spectrally expanding) equal to 
$$\sum_{f\in \mathcal{B}_p} |\langle v_0 , \omega_f^{\pm}\rangle|^
2 |\langle v_f^{\pm}, \omega \rangle|
^2.$$   
Note that now we have obtained positivity for free! Thus we are reduced to, one the one hand, a lower bound for $|\langle v_0 , \omega_f^{\pm}\rangle|^
2$ which we resolve in Corollary \ref{cor:lowerbndadd}. And on the other, a bound for the $L^2$-norm of (\ref{eq:gtheta}) which is essentially equal to
\begin{align}\label{eq:L2g}\sum_{1\leq n\leq p} \frac{|\langle T_n v_0, \mathbb{P}_\mathrm{cusp}\,\omega\rangle|^2}{n}.\end{align}
The above is an analogue of the second moment matrix count that appears in e.g. \cite[Section 9]{KhayutinSteiner20}. We will bound (\ref{eq:L2g}) using geometric coding of geodesics. 

\begin{remark}\label{rem:conj}
Let $B$ be a basic basis of level $p$ and let $v\in B$ and $v^\ast\in  B^{\ast}$ such that $\langle v, v^\ast\rangle=1$. Then we get by expanding in the Hecke basis
$$ 1= \sum_{f\in \mathcal{B}_p,\pm } \langle v_f^\pm, v^\ast \rangle \langle v, \omega_f^\pm \rangle.$$
As we will see in Corollary \ref{cor:lowerbndadd} we have on average that $\langle v, \omega_f^\pm \rangle\asymp 1$. Thus if the classes $v_f^\pm$ are perfectly distributed (relative to $B$) then we would have $\langle v_f^\pm, v^\ast \rangle \asymp 1/p$. Thus the strongest sup norm conjecture one can hope for is
$$ \langle v_f^\pm, \omega \rangle \ll_\eps p^{-1+\eps}, $$
for $\omega\in  B^{\ast}$. This might however be too much to hope for.
\end{remark}
\begin{remark}
The dual sup norm problem corresponds to obtaining bounds for the modular symbols
$$\sup_{v\in B}\, |\langle v, \omega_f \rangle|,$$
for $v\in B$ as $p\rightarrow \infty$ where $\omega_f=f(z)dz$ with $f\in \mathcal{B}_p$ a holomorphic Hecke eigenform of weight $2$ and level $p$. Let $v=\{\gamma z,z\}\in H_1(Y_0(p),\Z)$ with $\gamma\in \mathcal{S}(\mathcal{F}_\mathrm{Zag}(p))$ which we may assume is hyperbolic. Picking $z$ on the fixed circle of $\gamma$ and using H\"{o}lder's inequality we see that 
\begin{align*}\left|\langle v, \omega_f \rangle\right| \leq \int_{\mathcal{C}_\gamma} |\Im (z)f(z)|\frac{dz}{\Im (z)}\ll_\eps p^{1/4+\eps} ,
\end{align*}
using the sup norm estimate coming from \cite[Theorem 1.6]{KhayutinNelsonSteiner22}. Here $\mathcal{C}_\gamma$ denotes the closed geodesic on $X_0(p)$ associated to $\gamma$ with hyperbolic length $\ell(\mathcal{C}_\gamma)\ll \log p$. Notice that we also have the trivial bound 
$$ \left( \sum_{f\in \mathcal{B}_p}\left|\langle v, \omega_f \rangle\right|^2\right)^{1/2} \leq \int_{\mathcal{C}_\gamma} \left(\sum_{f\in \mathcal{B}_p} |\Im (z)f(z)|^2\right)^{1/2}\frac{dz}{\Im (z)}\ll_\eps p^{1/2+\eps},  $$ 
by Minkowski's integral inequality and the pre-trace formula (thanks to the referee for pointing this out). 
\end{remark}

\subsection{A lower bound for modular symbols}
In this section we will construct the homology class $v_0\in H_1(X_0(p), \R)$ mentioned above such that $\langle v_0, \omega_f^{\pm}\rangle$ is {\lq\lq}not too small{\rq\rq} in terms of the level. We will be constructing $v_0$ in terms of classes of the shape $\{\tfrac{x}{p},\infty\}$ for $1\leq x< p$ (using the notation (\ref{eq:def})). This reduces the problem to a lower bound for the additive twist $L$-functions of $f$. One could naively try to calculate the average of all additive twists with $1\leq x<p$. But this yields very pour results as there is a lot of cancellation in this sum. Instead we use that the second moment coincides with the second moment of  Dirichlet twists of the $L$-function of $f$ (by the Birch--Stevens formula). By Cauchy--Schwarz it suffices to calculate the first moment of these $L$-functions instead. Such a calculation is quite standard using an approximate functional equation. The exact case we need does however not seem to have been considered before as we are in the case of joint ramification. For the sake of completeness we have provided a detailed argument below. 

To be more precise, let $f\in \mathcal{B}_p$ be a Hecke normalized eigenform and let $\chi$ be a primitive Dirichlet character modulo $q$ with $p|q$. Then we define the twisted $L$-function as the analytic continuation of 
$$L(f,\chi,s):= \sum_{n\geq 1}\frac{\lambda_f(n)\chi(n)}{n^{s+1/2}},$$
(here the Ramanujan conjecture is $|\lambda_f(n)|\leq d(n)n^{1/2}$). By the Birch--Stevens formula (see e.g. \cite[Proposition 6.1]{Nordentoft20.4}), we have for a primitive $\chi$ with $\chi(-1)=\pm 1$;
\begin{equation}\label{eq:BirchStevens} \tau(\overline{\chi})L(f,\chi,s)= \sum_{a (q)}\chi(a) L^{\pm}(f, a/q,s),\end{equation}
where $L^\pm(f, a/q,s)=\frac{1}{2}(L(f, a/q,s)\pm L(f, -a/q,s))$ with 
$$L(f, a/q,s):=\sum_{n\geq 1} \frac{\lambda_f(n)e(na/q)}{n^{s+1/2}},\quad \Re s>1,$$
the additive twist $L$-function satisfying analytic continuation and the functional equation
$$ \gamma_f(s)q^s L(f,a/q,s)= -\gamma_f(1-s)q^{1-s} L(f,-\overline{a}/q,1-s), $$
where $a\overline{a}\equiv 1\modulo q$ and $\gamma_f(s)=\Gamma(s+1/2)(2\pi)^{-s}$ (see e.g. \cite[Proposition 3.3]{Nordentoft20.4}). This implies the functional equation 
 \begin{equation}\label{eq:FEadd}\gamma_f(s)q^{s} L(f,\chi,s)=\eps_\chi  \gamma_f(1-s)q^{1-s} L(f,\overline{\chi},1-s),\end{equation}
where $\eps_\chi= \frac{-\tau(\chi)^2}{q}$ 

\begin{lemma} Let $f\in \mathcal{B}_p$ and $p|q$. Then we have (uniformly in $p$ and $f$) that  
$$ \frac{1}{\varphi^\ast_\pm(p)}\sideset{}{^{\ast,\pm}}\sum_{\chi \, (q)} L(f,\chi,1/2)=1+O_\eps(q^{-1/4+\eps}),$$
where the sum is restricted to primitive characters $\chi$ modulo $q$ with $\chi(-1)=\pm 1$, and $\varphi^*_\pm(q)$ denotes the total number of such characters. 
\end{lemma}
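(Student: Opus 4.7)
The plan is to carry out a classical first moment computation for the central values $L(f,\chi,1/2)$, starting from the functional equation \eqref{eq:FEadd} and using orthogonality of primitive Dirichlet characters. Concretely, a standard contour shift applied to the completed $L$-function yields an approximate functional equation of the shape
$$L(f,\chi,1/2)=\sum_{n\geq 1}\frac{\lambda_f(n)\chi(n)}{n}V_1\!\left(\frac{n}{q}\right)+\eps_\chi\sum_{n\geq 1}\frac{\lambda_f(n)\overline{\chi}(n)}{n}V_2\!\left(\frac{n}{q}\right),$$
where $V_1,V_2$ are smooth weight functions built from $\gamma_f(s)=\Gamma(s+1/2)(2\pi)^{-s}$, satisfying $V_i(0)=1$ and $V_i(x)\ll_{A}x^{-A}$ for $x\gg q^\eps$, so that both sums have effective length $\ll q^{1+\eps}$.

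The next step is to average each piece over primitive characters of prescribed parity using Möbius-inverted orthogonality
$$\sideset{}{^{*}}\sum_{\chi\,(q)}\chi(n)=\sum_{d\mid(q,n-1)}\mu(q/d)\varphi(d),$$
combined with the parity projector $\tfrac{1}{2}(1\pm\chi(-1))$. The contribution $n=1$ produces $\varphi^*_{\pm}(q)V_1(1/q)=\varphi^*_{\pm}(q)(1+O(q^{-A}))$, which after division by $\varphi^*_{\pm}(q)$ gives the main term $1$. For $n\geq 2$ in the first (direct) sum, I would decompose according to the divisors $d\mid q$ with $d\mid n-1$ and use the Ramanujan bound $|\lambda_f(n)|\leq d(n)n^{1/2}$, arranging the $n$-sum along arithmetic progressions modulo $d$; the Möbius cancellation kills all but proper divisors and the resulting estimate is comfortably of size $O(q^{-1/2+\eps})$.

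The dual sum is where the actual saving of $q^{-1/4+\eps}$ is earned. I would insert $\eps_\chi=-\tau(\chi)^2/q$ and expand $\tau(\chi)^2=\sum_{a,b\,(q)}\chi(ab)e\!\left(\tfrac{a+b}{q}\right)$, yielding
$$\sideset{}{^{*,\pm}}\sum_{\chi\,(q)}\eps_\chi\overline{\chi}(n)=-\frac{1}{2q}\sum_{a,b\,(q)}e\!\left(\tfrac{a+b}{q}\right)\Bigl(\sideset{}{^{*}}\sum_{\chi\,(q)}\chi(ab\overline{n})\pm\sideset{}{^{*}}\sum_{\chi\,(q)}\chi(-ab\overline{n})\Bigr).$$
Applying the same Möbius inversion to the inner sums reduces everything to incomplete Kloosterman-type sums $\sum_{a\,(q),(a,q)=1}e((a+c\overline{a})/q)$ restricted by congruence conditions modulo divisors $d\mid q$. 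Completing in the standard way and invoking Weil's bound $|K(m,n;d)|\ll d^{1/2+\eps}(m,n,d)^{1/2}$ yields square-root cancellation; summed against the smooth weights $V_2(n/q)\lambda_f(n)/n$ over $n\ll q^{1+\eps}$ and divided by $\varphi^*_{\pm}(q)\asymp q$, this produces the claimed bound $O(q^{-1/4+\eps})$.

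The main technical obstacle will be the joint ramification at $p\mid q$: because $f$ itself is ramified at $p$, the local factor of $L(f,\chi,s)$ at $p$ is trivial and the functional equation \eqref{eq:FEadd} hides no extra oscillation there, but the Möbius-sifted Kloosterman sums mod divisors $d\mid q$ with $p\mid d$ must be treated individually and then combined, and the completion argument requires $(n,q)=1$ conditions that interact non-trivially with the primitivity constraint. Getting a \emph{uniform} $q^{-1/4+\eps}$ saving across all divisors $d\mid q$, and making sure no parasitic main term is introduced from small divisors, is where essentially all of the bookkeeping lies.
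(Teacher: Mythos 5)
Your overall architecture (approximate functional equation for $L(f,\chi,1/2)$, M\"obius-inverted orthogonality with parity projectors, reduction of the $\eps_\chi$-average to Kloosterman sums, Weil's bound) matches the paper's, and your treatment of the direct sum and of the $n=1$ main term is fine. The gap is in the dual sum, and it is not mere bookkeeping. With your \emph{balanced} approximate functional equation both sums have length $q^{1+\eps}$. Weil's bound, applied pointwise in $n$ after summing over divisors $d\mid q$, gives $\sideset{}{^{\ast,\pm}}\sum_{\chi\,(q)}\eps_\chi\overline{\chi}(n)\ll_\eps q^{1/2+\eps}$, so the dual contribution is bounded by
$$\frac{q^{1/2+\eps}}{\varphi^\ast_\pm(q)}\sum_{1\leq n\ll q^{1+\eps}}\frac{|\lambda_f(n)|}{n}\ll \frac{q^{1/2+\eps}}{q}\cdot q^{1/2+\eps}=q^{\eps},$$
using the Ramanujan bound $|\lambda_f(n)|\leq d(n)n^{1/2}$. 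This does not even tend to zero, let alone give $q^{-1/4+\eps}$. Your claim that summing the Kloosterman sums against $V_2(n/q)\lambda_f(n)/n$ "produces the claimed bound" would require genuine cancellation between $\lambda_f(n)$ and $K_2(\pm n;d)$ in the $n$-aspect, i.e.\ a Voronoi/shifted-convolution input that you have not supplied and that a first-moment computation is designed to avoid.

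The repair is to take the approximate functional equation \emph{unbalanced}, with the direct sum of length $q^{\lambda}$ and the dual sum of length $q^{2-\lambda}$. The direct-sum error is then $O_\eps((q^{1-\lambda/3}+q^{\lambda/2+\eps})/\varphi^\ast_\pm(q))$ (the first term from $V(y)=1+O(y^{1/3})$ applied at $y=q^{-\lambda}$, the second from the $n>1$ terms in progressions mod $d\mid q$), while the dual-sum error becomes $O_\eps(q^{3/2-\lambda/2+\eps}/\varphi^\ast_\pm(q))$ since the $n$-sum now has length $q^{2-\lambda}$. Choosing $\lambda=3/2$ balances these at $O_\eps(q^{-1/4+\eps})$. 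This is exactly what the paper does, and it explains where the exponent $1/4$ comes from: it is the optimum of the trivial (Weil-pointwise-in-$n$) treatment over the choice of splitting point, not a square-root saving extracted from completing the sifted Kloosterman sums as your last paragraph suggests.
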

\begin{proof}
By the approximate functional equation \cite[Theorem 5.3]{IwKo} using (\ref{eq:FEadd}), we can write the moment in question as
\begin{align}\label{eq:AFE}
\frac{2}{\varphi^*(q)}\sideset{}{^{\ast,\pm}}\sum_{\chi \, (q)} \sum_{n\geq 1} \frac{\lambda_f(n)}{n}\left(\chi(n) V\left(\frac{n}{q^\lambda}\right)+ \eps_\chi \overline{\chi}(n) V\left(\frac{n}{q^{2-\lambda}}\right)\right),
\end{align} 
for some $0<\lambda<2$ to be chosen and $V: \R_{>0}\rightarrow \R$ a smooth, rapidly decaying function satisfying $V(y)=1+O(y^{1/3})$ as $y\rightarrow 0$ (see \cite[Proposition 5.4]{IwKo}). Now by a simple application of M\"{o}bius inversion we get that for $(n,q)=1$
\begin{align} 
 \label{eq:orth1}\sideset{}{^{\ast,\pm}}\sum_{\chi \, (q)} \chi(n) &=\sum_{d|q} \mu(q/d)\varphi(d)(\delta_{n\equiv 1 \, (d)}\pm \delta_{n\equiv -1 \, (d)})/2  ,\\ 
  \label{eq:orth2} \sideset{}{^{\ast,\pm}}\sum_{\chi \, (q)} \eps_\chi \overline{\chi}(n) &=-\frac{1}{q} \sum_{d|q} \mu(q/d)\varphi(d)\frac{K_2(n;d)\pm K_2(-n;d)}{2},
\end{align}
where $K_2(n;d)= \sum_{xy\equiv n\, (d)} e((x+y)/d)$ denotes the usual $2$-dimensional Kloosterman sum. By Weil's bound and standard estimates, we conclude that for $(n,q)=1$
\begin{equation}
\sideset{}{^{\ast,\pm}}\sum_{\chi \, (q)} \eps_\chi \overline{\chi}(n) \ll_\eps q^{1/2+\eps}. 
\end{equation}
Now for the primary part of the sum (\ref{eq:AFE}) we obtain using the analytic properties of $V$ mentioned above as well as (\ref{eq:orth1})
\begin{align}
&\frac{1}{\varphi^*_\pm(q)}\sideset{}{^{\ast,\pm}}\sum_{\chi \, (q)} \sum_{n\geq 1} \frac{\lambda_f(n)}{n} \chi(n) V\left(\frac{n}{q^\lambda}\right)\\
\label{eq:primaryside}&=1+O_{\eps}\left(\frac{q^{1-\lambda/3}}{\varphi^*_\pm(q)}+\frac{1}{\varphi^*_\pm(q)}\sum_{d|q} \varphi(d) \sum_{\substack{n\equiv \pm 1\, (d)\\ 1<n\ll q^{\lambda+\eps}}} \frac{|\lambda_f(n)|}{n}  \right),
\end{align}
with the main term corresponding to $n=1$. Now using the Ramanujan bound $|\lambda_f(n)|\leq d(n)n^{1/2}\ll_\eps n^{1/2+\eps}$, we get 
\begin{align}
\sum_{\substack{n\equiv \pm 1\, (d)\\ 1<n\ll c^{\lambda+\eps}}} \frac{|\lambda_f(n)|}{n}\ll_\eps \sum_{ 1\leq m \ll q^{\lambda+\eps}/d}\sum_\pm (md\pm 1)^{-1/2+\eps}\ll q^{\lambda/2+\eps}d^{-1},
\end{align}
which bounds the error-term in (\ref{eq:primaryside}) by $O_\eps((q^{1-\lambda/3}+q^{\lambda/2+\eps})/\varphi^\ast(q))$. Similarly by using (\ref{eq:orth2}) we can bound the dual sum by $O_\eps(q^{3/2-\lambda/2+\eps}/\varphi^\ast_\pm(q))$. Choosing $\lambda=3/2$ and recalling that $\varphi^\ast_\pm(q)\gg_\eps q^{1-\eps}$, we get the wanted asymptotic formula.
\end{proof}
\begin{cor} \label{cor:lowerbndadd}There exists an absolute constant $c_0>0$ such that for any prime $p$, $f\in \mathcal{B}_p$ and $\epsilon\in\{\pm\}$, we have
\begin{equation}\label{eq:1stmomentLB}
\frac{1}{p-1}\sum_{0<a  <p} |\langle\{\tfrac{a}{p},\infty\},\omega_f^{\epsilon} \rangle|^2\geq c_0.
\end{equation}
\end{cor}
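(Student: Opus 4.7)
The approach is to relate the modular symbols $\langle\{a/p, \infty\}, \omega_f^\epsilon\rangle$ to the real and imaginary parts of the additive twist $L$-values $L(f, a/p, 1/2)$, and then to extract the desired lower bound from the first-moment asymptotic of the preceding Lemma via Parseval on $(\Z/p\Z)^\times$ followed by a Cauchy--Schwarz lower bound for the second moment.

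First I will expand $\int_{a/p}^{i\infty} 2\pi i f(z)\,dz$ using the Fourier series $f(z) = \sum_n \lambda_f(n)e(nz)$; integrating term-by-term along the vertical line $z=a/p+it$ yields $-L(f, a/p, 1/2)$. Since the normalization in (\ref{eq:Heckebasis}) simplifies to $\omega_f^+ = \Re(2\pi i f\,dz)$ and $\omega_f^- = \Im(2\pi i f\,dz)$, and since the Hecke eigenvalues $\lambda_f(n)$ are real so that $\overline{L(f, a/p, 1/2)} = L(f, -a/p, 1/2)$, I obtain the identity
$$|\langle\{a/p, \infty\}, \omega_f^\epsilon\rangle|^2 = |L^\epsilon(f, a/p, 1/2)|^2$$
for both $\epsilon \in \{\pm\}$.

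Next I apply Parseval on $(\Z/p\Z)^\times$ together with the Birch--Stevens formula (\ref{eq:BirchStevens}). Because $L^\epsilon(f, a/p, 1/2)$ has parity $\epsilon$ in $a$, only primitive characters $\chi$ with $\chi(-1) = \epsilon$ contribute non-trivially, and for each such $\chi$ the Fourier coefficient $\sum_a \chi(a) L^\epsilon(f, a/p, 1/2)$ equals $\tau(\overline{\chi}) L(f, \chi, 1/2)$, with $|\tau(\overline{\chi})|^2 = p$. Discarding the non-negative trivial-character contribution gives
$$(p-1)\sum_{a=1}^{p-1} |L^\epsilon(f, a/p, 1/2)|^2 \geq p \sideset{}{^{\ast,\epsilon}}\sum_{\chi\,(p)} |L(f,\chi,1/2)|^2.$$
Cauchy--Schwarz combined with the preceding Lemma then yields
$$\sideset{}{^{\ast,\epsilon}}\sum_{\chi\,(p)} |L(f,\chi,1/2)|^2 \geq \frac{1}{\varphi^\ast_\epsilon(p)}\left|\sideset{}{^{\ast,\epsilon}}\sum_{\chi\,(p)} L(f,\chi,1/2)\right|^2 = \varphi^\ast_\epsilon(p)\bigl(1 + O_\eta(p^{-1/4+\eta})\bigr),$$
so that combining the two displays with $\varphi^\ast_\epsilon(p) \sim (p-1)/2$ produces
$$\frac{1}{p-1}\sum_{0<a<p} \bigl|\langle\{a/p,\infty\},\omega_f^\epsilon\rangle\bigr|^2 \geq \tfrac{1}{2} - o_{p \to \infty}(1).$$
This establishes the claim with, say, $c_0 = 1/4$ for all sufficiently large $p$; the finitely many smaller primes for which $\mathcal{B}_p$ is non-empty (starting at $p = 11$) are absorbed by shrinking $c_0$, since each individual sum is strictly positive.

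The main subtlety is ensuring that the first-moment asymptotic of the preceding Lemma is genuinely uniform in both the level $p$ and the form $f \in \mathcal{B}_p$; the proof via the approximate functional equation, combined with Weil's bound for the Kloosterman sums appearing in the dual sum, delivers this uniformity (indeed the polynomial decay $p^{-1/4+\eta}$ is wasteful, but more than sufficient for the qualitative lower bound required here).
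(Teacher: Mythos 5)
Your proof is correct and follows essentially the same route as the paper's: identify the modular symbols $\langle\{\tfrac{a}{p},\infty\},\omega_f^{\epsilon}\rangle$ with the additive twists $L^{\epsilon}(f,a/p,1/2)$, convert the second moment over $a$ into a second moment of Dirichlet-twisted $L$-values via Parseval on $(\Z/p\Z)^\times$ and the Birch--Stevens formula, and lower-bound that by Cauchy--Schwarz against the first-moment lemma. The one assertion you leave unjustified is the strict positivity of each individual sum for the finitely many small primes; the paper supplies this by noting that $\omega_f^{\epsilon}$ is a non-zero class vanishing on parabolic elements, while the classes $\{\tfrac{a}{p},\infty\}$ for $0<a<p$ together with the parabolic generator $T$ come from a generating set of $\Gamma_0(p)$, so they cannot all pair to zero.
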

\begin{proof} First of all recall that the following matrices generate $\Gamma_0(p)$;  
$$T=\begin{psmallmatrix} 1 & 1 \\0 &1  \end{psmallmatrix},\quad \begin{psmallmatrix} a & -(aa^\ast+1)/p \\p &-a^\ast  \end{psmallmatrix}:\quad 0< a<p,$$
where $0<a^\ast<p$ are such that $a\overline{a}\equiv -1\modulo p$ (as is proved in Section \ref{sec:zagier}). Since $\omega_f^{\pm}$ are non-zero cohomology classes vanishing on parabolic elements, we conclude that the left hand side of (\ref{eq:1stmomentLB}) is always non-zero.
  
We now define the Fourier transform of 
$$(\Z/p\Z)^\times \ni a\mapsto \langle\{\tfrac{a}{p},\infty\},\omega_f^{\pm} \rangle, $$
as follows for a Dirichlet character $\chi \modulo p$;
$$\widehat{L}_f^{\pm}(\chi):= \sum_{a\,(p)} \langle\{\tfrac{a}{p},\infty\},\omega_f^{\pm} \rangle \overline{\chi(a)}.$$
For $\chi$ primitive with $\chi(-1)=\pm 1$, we have by the Birch--Stevens formula (\ref{eq:BirchStevens}) that 
$$ \widehat{L}_f^{\pm}(\chi)= -\tau(\overline{\chi})L(f,\chi,1/2). $$ 
Now by Parseval we have 
\begin{align*}
\sum_{0<a <p} |\langle\{\tfrac{a}{p},\infty\},\omega_f^{\pm} \rangle|^2=\frac{1}{p-1}\sum_{\chi\, (p)} |\widehat{L}_f^{\pm}(\chi)|^2 \geq \frac{p}{p-1}\sideset{}{^{\ast,\pm}}\sum_{\chi\, (p)} |L(f,\chi,1/2)|^2.
\end{align*}
Using the previous lemma, we conclude by Cauchy--Schwarz that
$$  \frac{1}{\varphi^\ast_\pm(p)}\sideset{}{^{\ast,\pm}}\sum_{\chi \, (p)} |L(f,\chi,1/2)|^2\geq \left( \frac{1}{\varphi^\ast_\pm(p)} \sideset{}{^{\ast,\pm}}\sum_{\chi \, (p)} L(f,\chi,1/2)\right)^2\geq 1/2, $$
for $p$ large enough. This yields the wanted lower bound since $\varphi_\pm^\ast(p)\asymp p-1$. 
\end{proof}
\begin{remark} Using a subconvexity bound for $L(f\otimes \chi, 1/2)$ as in \cite{MichelVenk10}, the above implies that there is some constant $\delta>0$ such that $L(f\otimes \chi, 1/2)$ is non-vanishing for at least $\gg p^\delta$ of both odd and even characters. This improves on Merel  \cite[Corollaire 2]{Merel09}, who has shown using modular symbol techniques that for $f\in \mathcal{B}_p$ there exists at least one odd and one even character $\chi$ of conductor $p$ such that  $L(f\otimes \chi, 1/2)\neq 0$. One can probably get much better results using mollification.  
\end{remark}
\subsection{Reduction to a counting problem}
We will now use the theta correspondence in a simple form to reduce the sup norm problem to a certain counting problem.  
\begin{lemma}\label{lem:theta}
Let $\omega\in H^1(X_0(p),\R)$ be a cuspidal cohomology class. Then we have 
$$ \sum_{\epsilon\in \{\pm \}}\sum_{f\in \mathcal{B}_p} | \langle v_f^{\epsilon}, \omega\rangle|^2 \ll_\eps p^{-1+\eps}\sum_{n\geq 1} \frac{\frac{1}{p}\sum_{0< x<p}(|\langle T_n \{\tfrac{x}{p}, \infty\}, \omega \rangle|^2+|\langle T_n \{\tfrac{1}{x}, 0\}, \omega \rangle|^2)}{n} e^{-8 n/p},$$
uniformly in $p$, where $\langle \cdot, \cdot \rangle$ denotes the cap product pairing (\ref{eq:cap}) between homology and cohomology and $T_n$ denotes the $n$-th Hecke operator.
\end{lemma}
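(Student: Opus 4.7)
The plan is to realise the left-hand side as the Petersson norm of a cusp form attached to a modular symbol via the Eichler--Shimura/theta correspondence, and then to bound this norm in two ways: spectrally from below (using Corollary~\ref{cor:lowerbndadd}) and geometrically from above (via Fourier expansion at the two cusps of $X_0(p)$). Concretely, for any $v\in H_1(Y_0(p),\R)$ I would set
\begin{equation*}
g_v(z):=\sum_{n\geq 1}\langle T_nv,\omega\rangle e^{2\pi inz}.
\end{equation*}
Expanding $v$ in the Hecke basis (\ref{eq:Heckebasisdual}), using self-adjointness of $T_n$ together with cuspidality of $\omega$ (so that $\langle T_nv_E(p),\omega\rangle=\langle v_E(p),T_n\omega\rangle=0$), and invoking $T_nv_f^\epsilon=\lambda_f(n)v_f^\epsilon$, one identifies
\begin{equation*}
g_v(z)=\sum_{f\in\mathcal{B}_p,\,\epsilon\in\{\pm\}}\langle v,\omega_f^\epsilon\rangle\,\langle v_f^\epsilon,\omega\rangle\,f(z)\in\mathcal{S}_2(p),
\end{equation*}
which is the incarnation of the theta correspondence alluded to in the text.

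I would then apply this to the $\iota$-isotypic projections $v_x^\pm:=\tfrac12\bigl(\{x/p,\infty\}\pm\iota\{x/p,\infty\}\bigr)$ of the modular symbols for $0<x<p$. Because $\iota$-odd cohomology pairs trivially with $\iota$-even homology and vice versa, $\langle v_x^\pm,\omega_f^\mp\rangle=0$, and Petersson-orthogonality of the newforms yields
\begin{equation*}
\|g_{v_x^\pm}\|^2=\sum_{f\in\mathcal{B}_p}\bigl|\langle\{x/p,\infty\},\omega_f^\pm\rangle\bigr|^2\,\bigl|\langle v_f^\pm,\omega\rangle\bigr|^2\,\|f\|^2.
\end{equation*}
Summing over $0<x<p$, Corollary~\ref{cor:lowerbndadd} supplies the $f$-uniform lower bound $\sum_x|\langle\{x/p,\infty\},\omega_f^\pm\rangle|^2\geq c_0(p-1)$, and the classical Hoffstein--Lockhart estimate $\|f\|^2\gg_\eps p^{1-\eps}$ (equivalent to $L(\mathrm{sym}^2f,1)\gg p^{-\eps}$) produces
\begin{equation*}
\sum_{\epsilon\in\{\pm\}}\sum_{f\in\mathcal{B}_p}|\langle v_f^\epsilon,\omega\rangle|^2\ll_\eps p^{-2+\eps}\sum_{0<x<p,\,\pm}\|g_{v_x^\pm}\|^2.
\end{equation*}

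It remains to estimate $\|g_v\|^2=\int_F|g_v|^2\,dxdy$ from above. Using the fact recorded at (\ref{eq:funddomainHp}) that $\langle\Gamma_0(p),W_p\rangle$ admits a fundamental domain $F'\subset\{|x|\leq 1/2,\,y\geq\sqrt3/(2p)\}$, I would write a fundamental domain of $\Gamma_0(p)$ in the form $F=F'\cup W_pF'$. Plancherel applied to the Fourier expansion at $\infty$ gives
\begin{equation*}
\int_{F'}|g_v|^2\,dxdy\leq\int_{\sqrt3/(2p)}^\infty\sum_{n\geq 1}|\langle T_nv,\omega\rangle|^2 e^{-4\pi ny}\,dy=\sum_{n\geq 1}\frac{|\langle T_nv,\omega\rangle|^2}{4\pi n}e^{-2\sqrt3\pi n/p},
\end{equation*}
and $2\sqrt3\pi>8$ lets the exponential be replaced by $e^{-8n/p}$. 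On $W_pF'$ a change of variables together with the identity $g_v|_{W_p}=g_{W_pv}$ (following from $f|_{W_p}=\varepsilon_f(W_p)f$ for newforms and the self-adjointness of $W_p$ for the cap product) converts the integral to $\int_{F'}|g_{W_pv}|^2\,dxdy$, subject to the same Plancherel bound in terms of $\langle T_nW_pv,\omega\rangle$. Specialising $v=v_x^\pm$ and using $W_p\{x/p,\infty\}=\{-1/x,0\}$ and $\iota\{x/p,\infty\}=\{(p-x)/p,\infty\}$ in $H_1$, folding the $\iota$-components and reindexing $x\leftrightarrow p-x$ collects both contributions into the sums $\sum_x|\langle T_n\{x/p,\infty\},\omega\rangle|^2$ and $\sum_x|\langle T_n\{1/x,0\},\omega\rangle|^2$ of the statement.

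The principal technical hurdle is pinning down the identity $g_v|_{W_p}=g_{W_pv}$ and ensuring that the Fricke action on homology (\ref{eq:Frickehom}), on cohomology (\ref{eq:Fricke}), and on $q$-expansions is consistently normalised, together with the $\iota$-bookkeeping when folding the symbol sums. Once this is in place, feeding the two Plancherel estimates into the inequality above and writing $p^{-2+\eps}\sum_x=p^{-1+\eps}\cdot\tfrac1p\sum_x$ produces the stated bound, with the factor $1/(4\pi)$ absorbed into $\ll_\eps$.
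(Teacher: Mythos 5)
Your proposal is correct and follows essentially the same route as the paper: lift the modular symbol to a cusp form $g$ via the Hecke expansion, compute $\|g\|^2_{\mathrm{Pet}}$ spectrally (getting positivity for free), lower-bound the symbol factors by Corollary \ref{cor:lowerbndadd} and $\|f\|^2_{\mathrm{Pet}}$ by Hoffstein--Lockhart, and upper-bound $\|g\|^2_{\mathrm{Pet}}$ by Parseval over the Siegel-type domain (\ref{eq:funddomainHp}), with $2\sqrt{3}\pi>8$. The only (cosmetic) difference is the Fricke bookkeeping: the paper first projects $\{x/p,\infty\}$ onto $W_p$-eigenspaces via $(1+\epsilon_2 W_p)$ so that $|g|^2$ is $\Gamma_0^*(p)$-invariant and the integral unfolds directly, whereas you split the fundamental domain as $F'\cup W_pF'$ and use $g_v|_{W_p}=g_{W_pv}$ --- the two are equivalent.
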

\begin{proof}
For $0<x<p$ and ${\bm{\epsilon}}=(\epsilon_1,\epsilon_2)$ with $\epsilon_i\in\{\pm 1\}$ we consider the following class in the compactly supported homology 
\begin{align} v_x^{\bm{\epsilon}}:=&(1+\epsilon_1 \iota)(1+\epsilon_2 W_p)\{\tfrac{x}{p} , \infty \}\\
=& \{\tfrac{x}{p} , \infty \}+\epsilon_1 \{-\tfrac{x}{p} , \infty \}+\epsilon_2\{-\tfrac{1}{x},0\}+\epsilon_1\epsilon_2 \{\tfrac{1}{x},0\}\in H_1(X_0(p),\Z),
\end{align}
where $0<\overline{x}<p$ is such that $x\overline{x}\equiv 1\modulo p$. By construction $v_x^{\bm{\epsilon}}$ is contained in the $\epsilon_1$ eigenspace of the involution $\iota$ as in (\ref{eq:iota}) and in the $\epsilon_2$ eigenspace of the Fricke involution $W_p$ as in (\ref{eq:Frickehom}). Associated to $0<x<p$, ${\bm{\epsilon}}\in\{(\pm1,\pm1)\}$ and $\omega\in H^1(X_0(p),\R)$ (suppressed in the notation) we define $g:\Hb\rightarrow \C$ by 
\begin{equation} g(z):=\sum_{n\geq 1} \langle T_n v_x^{\bm{\epsilon}}, \omega \rangle e^{2\pi i nz}.\end{equation}
By expanding in the Hecke basis of cohomology, we obtain
$$  \langle T_n v_x^{\bm{\epsilon}}, \omega \rangle =\sum_{\substack{f\in \mathcal{B}_p:\\ W_p f=\epsilon_2 f}} \lambda_f(n) \langle v_x^{\bm{\epsilon}}, \omega_f^{\epsilon_1}\rangle \langle v_f^{\epsilon_1}, \omega\rangle. $$
By newform theory we know that for $f\in \mathcal{B}_p$ we have $\lambda_f(n)=a_f(n)$ where $a_f(n)$ denotes the Fourier coefficients (at $\infty$) of $f$. Thus we conclude that  
$$ g(z)= \sum_{\substack{f\in \mathcal{B}_p:\\ W_p f=\epsilon_2 f}} \langle v_x^{\bm{\epsilon}}, \omega_f^{\epsilon_1}\rangle \langle v_f^{\epsilon_1}, \omega\rangle f(z),  $$
and in particular $g\in S_2(p)$ is a holomorphic cusp form of weight $2$ for $\Gamma_0(p)$ contained in the $\epsilon_2$ eigenspace of $W_p$. This can be seen as an instance of the theta correspondence as explained above. By orthogonality of Hecke eigenforms, this implies the key identity
\begin{equation} \langle g, g \rangle_\mathrm{Pet} = \sum_{\substack{f\in \mathcal{B}_p:\\ W_p f=\epsilon_2 f}} \langle f, f \rangle_\mathrm{Pet} |\langle v_x^{{\bm{\epsilon}}}, \omega_f^{\epsilon_1}\rangle \langle v_f^{\epsilon_1}, \omega\rangle|^2 ,  \end{equation}   
where $\langle f, g \rangle_\mathrm{Pet}= \int_{Y_0(p)} f(z)\overline{g(z)} dxdy$ denotes the Petterson inner-product on $\mathcal{S}_2(p)$. Recall from  (\ref{eq:funddomainHp}) that the subgroup 
$$\Gamma_0^\ast(p):=\langle W_{p}, \Gamma_0(p)\rangle\subset \PSL_2(\R),$$ 
has $\Gamma_0(p)$ as an index two subgroup and has a fundamental domain contained  in 
$$\{z\in \Hb: |\Re z| \leq 1/2, \Im z \geq \sqrt{3}/(2p)  \}.$$ 
By unfolding and using that $|g|^2$ is invariant under the Fricke involution $W_p$ by construction we arrive at the following bound
\begin{align}\nonumber\sum_{\substack{f\in \mathcal{B}_p:\\ W_p f=\epsilon_2 f}} \langle f, f \rangle_\mathrm{Pet}|\langle v_x^{{\bm{\epsilon}}}, \omega_f^{\epsilon_2}\rangle \langle v_f^{\epsilon_2}, \omega\rangle|^2=2\int_{\Gamma_0^\ast(p)\backslash \Hb} |g(z)|^2 dxdy  &\ll \int_{\sqrt{3}/(2p)}^\infty \int_{-1/2}^{1/2} |g(z)|^2 dxdy\\ 
\nonumber&\ll \sum_{n\geq 1} |\langle T_n v_x^{{\bm{\epsilon}}}, \omega \rangle|^2 \int_{\sqrt{3}/(2p)}^\infty e^{-4\pi n y}  dy\\
\label{eq:mainineq}&\ll \sum_{n\geq 1} \frac{|\langle T_n v_x^{{\bm{\epsilon}}}, \omega \rangle|^2}{n} e^{-2 \sqrt{3}\pi n/p}.
\end{align} 
Clearly we have 
$$|\langle T_n v_x^{{\bm{\epsilon}}}, \omega \rangle|^2 \ll |\langle T_n \{\tfrac{x}{p} , \infty \}, \omega \rangle|^2+|\langle T_n \{-\tfrac{x}{p} , \infty \}, \omega \rangle|^2+|\langle T_n \{\tfrac{1}{x} , \infty \}, \omega \rangle|^2+|\langle T_n \{-\tfrac{1}{x} , \infty \}, \omega \rangle|^2,$$
by linearity of $T_n$. Now we sum over $0<x<p$, $\bm{\epsilon}\in\{(\pm1,\pm1)\}$ and apply Corollary \ref{cor:lowerbndadd} to the left hand side of (\ref{eq:mainineq}). Finally by expressing the Petterson inner product in terms of a special value of an adjoint $L$-function (see e.g. \cite[(8.6)]{PeRi}) and using the lower bounds of Hoffstein--Lockhart \cite{HoffLock94}, we arrive at 
$$ \langle f, f \rangle_\mathrm{Pet}=\frac{pL(\sym^2 f, 1)}{8\pi^3}\gg_\eps p^{1-\eps},$$ 
which yields the wanted bound since $2 \sqrt{3}\pi>8$. 
\end{proof}

\subsection{The counting argument} 
By Lemma \ref{lem:theta} we are reduced to a certain second moment count. We think of this as an analogue of the matrix counts that show up in most approaches to the (arithmetic) sup norm problem (see e.g. \cite{Templier15}). Recall the Ramanujan bound $|\lambda_f(n)|\leq n^{1/2}d(n)$ for Hecke eigenforms $f\in \mathcal{B}_p$. This implies for any class $v\in H_1(X_0(p),\R)$ and $\omega \in H^1(X_0(p),\R)$ we have 
$$\langle T_n v, \omega \rangle\ll_{v,\omega} n^{1/2} d(n),\quad \text{as }n\rightarrow \infty.$$ 
This is however not useful as for us the main point is exactly the dependence on $v$ and $\omega$. Our approach is to use the explicit description of the Hecke operators and then lift the counting from $\Gamma_0(p)^\mathrm{ab}$ to $\Gamma_0(p)$. As a first step, we use geometric coding as in Proposition \ref{prop:geocoding} to obtain the following.
\begin{lemma}\label{lem:geom}
Let $\omega\in H^1(X_0(p),\R)$ be a cuspidal cohomology class 
and $\gamma=\begin{psmallmatrix} a & b \\ c & d\end{psmallmatrix}\in \Gamma_0(p)$ with $c> 0$. Then we have
\begin{equation}\label{eq:geometricbnd} | \langle \{\gamma \infty, \infty \}, \omega \rangle| \ll  \left(\max_{0<x<p} |\langle \{\tfrac{x}{p} ,\infty\}, \omega\rangle|\right)\log  c,\end{equation}
uniformly in $p$.
\end{lemma}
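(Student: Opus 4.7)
Since $\omega$ is cuspidal, it vanishes on parabolic elements when viewed in $H^1(Y_0(p), \R)$ via (\ref{eq:SES}). The map
$$\phi : \Gamma_0(p) \to \R, \qquad \phi(\gamma) := \int_z^{\gamma z} \omega = \langle \{z, \gamma z\}, \omega \rangle$$
is then a well-defined group homomorphism, independent of $z \in \Hb$ by the $\Gamma_0(p)$-invariance of $\omega$ on $\Hb$, that vanishes on every parabolic element. In particular $|\langle \{\gamma\infty, \infty\}, \omega\rangle| = |\phi(\gamma)|$, so it suffices to bound $|\phi(\gamma)|$.

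The plan is to apply the geometric coding (Proposition \ref{prop:geocoding}, in the form of Corollary \ref{cor:geocoding}) with Zagier's fundamental polygon $\mathcal{F}_\mathrm{Zag}(p)$. A generic smooth path from $z_0$ to $\gamma z_0$ in $\Hb$ yields a decomposition $\gamma = s_1 \cdots s_N$ with $s_i \in \mathcal{S}(\mathcal{F}_\mathrm{Zag}(p))$ the side-pairing of the $i$-th side crossed. Splitting $\phi(\gamma) = \sum_i \phi(s_i)$, the parabolic generators $T^{\pm 1}$ and $\bigl(\begin{smallmatrix}1 & 0\\ \pm p & 1\end{smallmatrix}\bigr)$ each contribute zero, while for each hyperbolic generator $\sigma_a^{\pm 1}$ (with $0 < a < p$, satisfying $\sigma_a \cdot \infty = a/p$) one has $|\phi(\sigma_a^{\pm 1})| = \bigl|\int_\infty^{a/p} \omega\bigr| \leq M$, where $M := \max_{0<x<p}|\langle \{x/p, \infty\}, \omega\rangle|$. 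Hence $|\phi(\gamma)| \leq N_\sigma \cdot M$, where $N_\sigma$ counts the $\sigma_a^{\pm 1}$-type letters in the word.

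The main obstacle is then to show $N_\sigma \ll \log c$, uniformly in $p$. After replacing $\gamma$ by $T^{-m} \gamma T^n$ for suitable $m, n \in \Z$ (which preserves both $\phi(\gamma)$ and the bottom-left entry $c$), one may arrange $|a|, |d| \leq c/2$. Choosing the path in the geometric coding to be a generic perturbation of the hyperbolic geodesic from $z_0 = i$ to $\gamma i$, its hyperbolic length is $d_\Hb(i, \gamma i) = \operatorname{arccosh}((a^2+b^2+c^2+d^2)/2) \ll \log c$. Recall from Section \ref{sec:zagier} that $\mathcal{F}_\mathrm{Zag}(p)$ is a union of $p+1$ $\SL_2(\R)$-translates of the standard polygon $\mathcal{F}_\mathrm{std}$ of $\PSL_2(\Z)$, so the $\Gamma_0(p)$-tiling refines to a sub-tiling of $\Hb$ whose tiles are all isometric to $\mathcal{F}_\mathrm{std}$. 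By standard hyperbolic geometry (essentially the Artin--Series coding of geodesics), a geodesic segment of hyperbolic length $\ell$ crosses $O(\ell + 1)$ sides of such a sub-tiling, with the implicit constant uniform in $p$. Since every $\Gamma_0(p)$-side crossing is in particular a sub-tile crossing, one concludes $N_\sigma \ll \log c$, completing the proof.
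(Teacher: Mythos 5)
Your setup is sound and matches the paper's strategy up to the last step: $\phi$ is a homomorphism killed on parabolics, the geometric coding of a path from $z_0$ to $\gamma z_0$ expresses $\gamma$ as a word in $\mathcal{S}(\mathcal{F}_\mathrm{Zag}(p))$, the parabolic letters drop out, and each hyperbolic letter contributes at most $M$. The gap is in the claim that a geodesic segment of hyperbolic length $\ell$ crosses only $O(\ell+1)$ sides of the sub-tiling by copies of $\mathcal{F}_\mathrm{std}$. This is false because of cusp excursions: the number of tiles a segment traverses is not controlled by its length. Concretely, $\gamma=\begin{psmallmatrix}1&0\\c&1\end{psmallmatrix}$ has all entries $O(c)$ and $d_\Hb(i,\gamma i)\asymp\log c$, but its minimal word length in the generators $S,T$ of $\PSL_2(\Z)$ is $\asymp c$, so \emph{any} path from $i$ to $\gamma i$ crosses at least $\asymp c$ sides of the tiling --- exponentially more than $O(\log c)$. (This is precisely why the Artin--Series coding you allude to does not simply read off tile crossings.) Since your bound on $N_\sigma$ passes through a bound on \emph{all} crossings, it does not go through.

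The repair, which is what the paper does, is to exploit that the superabundant crossings during a cusp excursion are all crossings of \emph{cuspidal} sides, which for Zagier's polygon are exactly the sides paired by parabolic elements and hence never contribute to $N_\sigma$. One therefore only needs to count crossings of $\PSL_2(\Z)$-translates of the non-parabolic side of $\mathcal{F}_\mathrm{std}$, i.e.\ the arc $\{|z|=1,\ |\Re z|<1/2\}$; the paper conjugates by $W_p$ to pass from $\mathcal{F}_\mathrm{Zag}(p)$ to $\tilde{\mathcal{F}}_\mathrm{Zag}(p)$ (which genuinely is a union of $\PSL_2(\Z)$-translates of $\mathcal{F}_\mathrm{std}$, unlike its $W_p$-image), and then invokes Eichler's Satz~1 to bound the number of such arc-crossings by $\ll\log(a'^2+b'^2+c'^2+d'^2)\ll\log c$. (Equivalently one can argue that all translates of the arc lie in the thick part of the surface, where the family is uniformly locally finite, so only there does length control crossings.) The paper also codes along the axis of $\gamma$ rather than the segment from $i$ to $\gamma i$, after normalizing $2c\le|a+d|\le 4c$, $|a-d|<c$ so the axis meets the fundamental polygon; your normalization $|a|,|d|\le c/2$ via $T^{-m}\gamma T^n$ is fine for your choice of path, but the missing Eichler-type input is still needed.
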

\begin{proof}
Observe that $\{\gamma \infty, \infty \}$ only depends on $a/c \modulo 1$. This means that without changing the homology class we may assume that $a,d$ are integers satisfying $2c\leq |a+d|\leq 4 c $ as well as $-c<a-d<c$. This implies that all entries of $\gamma$ are $O( c )$, and that the half-circle $S_\gamma$ fixed by $\gamma$ intersects the standard fundamental domain $\mathcal{F}_\mathrm{std}$ for $\PSL_2(\Z)$. Recall Zagier's fundamental polygon $\mathcal{F}_\mathrm{Zag}'(p)$ for $\Gamma_0(p)$ defined in Section \ref{sec:zagier} which consists of $\PSL_2(\Z)$-translates of $\mathcal{F}_\mathrm{std}$, as well as the fundamental polygon $\mathcal{F}_\mathrm{Zag}(p):=W_{p} \mathcal{F}_\mathrm{Zag}'(p)$. This latter polygon has the advantage that all entries of the associated side pairing transformations are $\leq p$, whereas for $\mathcal{F}_\mathrm{Zag}'(p)$ the lower left entry can be of magnitude $ p^2$. Notice also that the assumptions on $\gamma$ above insure that the half-circle $S_\gamma$ fixed by $\gamma$ intersects the Siegel domain $\{z\in \Hb: |\Re z|\leq 1/2, \Im z\geq 1\}$ which is contained in $\mathcal{F}_\mathrm{Zag}(p)$.  

We now consider the geometric coding of $\gamma$ with respect to $ \mathcal{F}_\mathrm{Zag}(p)$ as in Corollary \ref{cor:geocoding}. 
This yields an expression 
$$ \gamma= \sigma_1\cdots \sigma_{N},$$
where $\sigma_i\in \mathcal{S}(\mathcal{F}_\mathrm{Zag}(p))$ are side pairing transformations. When considering the class $\{\gamma \infty, \infty \}\in H_1(X_0(p), \Z)$ in the compact homology, we can ignore the parabolic elements among the $\sigma_i$'s. More precisely, if $\sigma'_1,\ldots, \sigma'_\ell$ is the subsequence of $\sigma_1,\ldots ,\sigma_{N}$ consisting of non-parabolic elements, then we have
$$ \{\gamma \infty, \infty \} =\sum_{i=1}^\ell \{ \sigma'_i\infty , \infty \}\in H_1(X_0(p), \Z).$$
Now $\ell$ is exactly the number of intersections between the geodesic from $w$ to $\gamma w$ (for any $w\in \mathcal{F}_\mathrm{Zag}(p)\cap S_\gamma$) and $\Gamma_0(p)$-translates of the sides of $\mathcal{F}_\mathrm{Zag}(p)$ being paired by non-parabolic elements. This is equal to the number of intersections between the geodesic from $w$ to $\gamma' w$ (where $\gamma'=W_{p}\gamma W_{p}^{-1}$ and $w\in \mathcal{F}_\mathrm{Zag}'(p)\cap S_{\gamma'}$) and $\Gamma_0(p)$-translates of the non-parabolic sides of $\mathcal{F}_\mathrm{Zag}'(p)$ (this follows by conjugating everything which preserves parabolicity). Recall that for $\mathcal{F}_\mathrm{Zag}'(p)$, all sides containing a cusp are paired by parabolic elements. Since $\mathcal{F}_\mathrm{Zag}'(p)$ consists of $\PSL_2(\Z)$-translates of $\mathcal{F}_\mathrm{std}$, we can bound $\ell$ by the number of intersections between the geodesic from $w$ to $\gamma' w$ (for $w\in \mathcal{F}_\mathrm{std}\cap S_{\gamma'}$) and $\PSL_2(\Z)$-translates of the non-parabolic side of $\mathcal{F}_\mathrm{std}$ i.e. the arc 
$$\{z\in \Hb: |z|=1, -1/2<\Re z< 1/2\}.$$ 
It now follows from a result of Eichler \cite[Satz 1]{Eichler65} that 
$$ \ell \ll \log ((a')^2+(b')^2+(c')^2+(d')^2) \ll \log  c , $$
where $a',b',c',d'$ are the entries of $\gamma'$, which by the assumptions on the entries of $\gamma$ are all $O( p c)= O(c ^2)$. 
By definition of $\mathcal{F}_\mathrm{Zag}(p)$ we have for all $1\leq i\leq \ell $ that $\sigma'_i\infty$ is of the form $\tfrac{x}{p}$ with $0<x<p$. Thus we conclude  
$$ |\langle \{\gamma \infty, \infty \},\omega \rangle| \ll  \ell \left(\max_{0<x<p} |\langle \{\tfrac{x}{p} ,\infty\}, \omega\rangle|\right)\ll 
\log c \left(\max_{0<x<p}|\langle \{\tfrac{x}{p} ,\infty\}, \omega\rangle|\right), $$
as wanted.
\end{proof}
This implies the following result for basic bases. Recall the definition of the cuspidal projection operator $\mathbb{P}_\mathrm{cusp}$ in (\ref{eq:franke1}).
\begin{cor}\label{cor:bndbasic}
Let ${B}$ be a basic basis of level $p$. Then for $\gamma=\begin{psmallmatrix} a & b \\ c & d\end{psmallmatrix}\in \Gamma_0(p)$ with $c>0$ and $\omega \in  B^{\ast}$ we have
$$  |\langle \{\gamma \infty,\infty\}, \mathbb{P}_\mathrm{cusp} \, \omega\rangle|\ll \log c, $$
uniformly in $p$.
\end{cor}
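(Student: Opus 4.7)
The plan is to deduce the corollary by applying Lemma \ref{lem:geom} to the cuspidal class $\mathbb{P}_{\mathrm{cusp}}\,\omega$ and then showing that the maximum appearing on the right-hand side of (\ref{eq:geometricbnd}) is bounded uniformly in $p$. Since $\mathbb{P}_{\mathrm{cusp}}\,\omega$ lies in $H^1(X_0(p),\R)$ by construction, Lemma \ref{lem:geom} immediately gives
$$|\langle \{\gamma\infty,\infty\}, \mathbb{P}_{\mathrm{cusp}}\,\omega\rangle|\ll \left(\max_{0<x<p}|\langle\{x/p,\infty\},\mathbb{P}_{\mathrm{cusp}}\,\omega\rangle|\right)\log c,$$
so the task reduces to bounding $|\langle\{x/p,\infty\},\mathbb{P}_{\mathrm{cusp}}\,\omega\rangle|$ by a constant independent of $p$, $x$ and $\omega\in B^{\ast}$.

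For each $0<x<p$ I would introduce the side pairing transformation $\sigma_x=\begin{psmallmatrix} x & -(xx^{\ast}+1)/p \\ p & -x^{\ast}\end{psmallmatrix}\in \mathcal{S}(\mathcal{F}_{\mathrm{Zag}}(p))$, which satisfies $\sigma_x\infty=x/p$. For any cuspidal class $\omega'\in H^1(X_0(p),\R)$ and any $z\in\Hb$ one has the identity
$$\langle\{x/p,\infty\},\omega'\rangle=\langle\{z,\sigma_x z\},\omega'\rangle,$$
which I would verify by writing the left-hand integral as the sum $\int_\infty^z\omega'+\int_z^{\sigma_x z}\omega'+\int_{\sigma_x z}^{\sigma_x\infty}\omega'$ and using the $\Gamma_0(p)$-invariance of $\omega'$ together with the fact that cuspidal forms decay exponentially at the cusps, so that the outer integrals converge and cancel.

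Next I would unfold $\mathbb{P}_{\mathrm{cusp}}\,\omega=\omega-\langle v_E(p),\omega\rangle\,\omega_E(p)$ to obtain
$$\langle\{z,\sigma_x z\},\mathbb{P}_{\mathrm{cusp}}\,\omega\rangle=\langle\{z,\sigma_x z\},\omega\rangle-\langle v_E(p),\omega\rangle\,\langle\{z,\sigma_x z\},\omega_E(p)\rangle,$$
and treat the two terms separately. The first is $\leq 1$ in absolute value by Corollary \ref{lem:relations} applied to $\sigma_x\in\mathcal{S}(\mathcal{F}_{\mathrm{Zag}}(p))$. For $\langle v_E(p),\omega\rangle$, note that $v_E(p)=\{i,Ti\}$ with $T\in\mathcal{S}(\mathcal{F}_{\mathrm{Zag}}(p))$, so Corollary \ref{lem:relations} again gives $|\langle v_E(p),\omega\rangle|\leq 1$. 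For the Eisenstein pairing, I would invoke the explicit estimate (\ref{eq:BoundEisenstein}): for $\sigma_x$ we have $c=p$ and $|a+d|=|x-x^{\ast}|\leq p$, yielding $|\langle\{z,\sigma_x z\},\omega_E(p)\rangle|\ll (p\cdot p+p^2)/(p\cdot p)=O(1)$. One also has to check the other elements of $\mathcal{S}(\mathcal{F}_{\mathrm{Zag}}(p))$, but the matrices $\begin{psmallmatrix}1&0\\\pm p&1\end{psmallmatrix}$ satisfy $|a+d|=2$ and again give $O(1)$, while $T^{\pm 1}$ is handled directly via $\langle v_E(p),\omega_E(p)\rangle=1$.

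Combining these bounds gives $|\langle\{x/p,\infty\},\mathbb{P}_{\mathrm{cusp}}\,\omega\rangle|\ll 1$ uniformly in $p$, $x$ and $\omega\in B^{\ast}$, and inserting this into Lemma \ref{lem:geom} concludes the proof. The argument is essentially routine once the preceding machinery is in place; the only genuinely delicate point is the cuspidal identification $\langle\{x/p,\infty\},\omega'\rangle=\langle\{z,\sigma_x z\},\omega'\rangle$, which depends crucially on the cuspidality of $\mathbb{P}_{\mathrm{cusp}}\,\omega$ to guarantee absolute convergence of the integrals near the cusps.
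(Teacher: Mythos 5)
Your proposal is correct and follows essentially the same route as the paper: reduce via Lemma \ref{lem:geom} to bounding $\max_{0<x<p}|\langle\{x/p,\infty\},\mathbb{P}_{\mathrm{cusp}}\omega\rangle|$, rewrite this pairing using the side pairing transformation $\gamma_x=\begin{psmallmatrix} x & -(xx^\ast+1)/p \\ p& -x^\ast\end{psmallmatrix}$, and then bound the three resulting terms by Corollary \ref{lem:relations} and the Eisenstein estimate (\ref{eq:BoundEisenstein}) exactly as you do. The only difference is that you spell out the contour-shifting justification of $\langle\{x/p,\infty\},\omega'\rangle=\langle\{z,\sigma_x z\},\omega'\rangle$ for cuspidal $\omega'$, which the paper states without comment.
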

\begin{proof}
By Proposition \ref{lem:geom} we are reduced to proving that 
$$ \max_{0<x<p}|\langle \{\tfrac{x}{p} ,\infty\}, \mathbb{P}_\mathrm{cusp}\omega\rangle|\ll 1,$$
for $\omega \in  B^{\ast}$. Recall that we have
$$ \langle \{\tfrac{x}{p} ,\infty\}, \mathbb{P}_\mathrm{cusp}\omega\rangle= \langle \{\gamma_x z  ,z\}, \omega\rangle-\langle v_{E}(p), \omega\rangle\langle \{\gamma_x z  ,z\}, \omega_{E}(p)\rangle, $$
where $\gamma_x:= \begin{psmallmatrix} x & -(xx^\ast+1)/p \\ p& -x^\ast \end{psmallmatrix}$ with $0<x^\ast<p$ such that $xx^\ast\equiv -1\modulo p$ and $z\in \Hb $ is arbitrary. Now by Corollary \ref{lem:relations}, we have
$$\langle v_{E}(p), \omega\rangle\ll 1,\quad   \langle \{\gamma_x z  ,z\}, \omega\rangle \ll 1,$$
and by (\ref{eq:BoundEisenstein}) we have 
 $$\langle \{\gamma_x z  ,z\}, \omega_{E}(p)\rangle\ll \frac{p|x-x^\ast|+p^2}{p^2}\ll 1,$$
 which yields the wanted bound.
\end{proof}

\subsection{Proof of Theorem \ref{thm:maincontr}}

Combining all of the above  we are now ready to prove  our sup norm bounds.

\begin{proof}[Proof of Theorem \ref{thm:maincontr}] 
Let $B$ be a basic basis of level $p$. By the definition of the Hecke operators acting on homology (\ref{eq:Tn}) we have for $\omega\in  B^{\ast}$ and $n\ll p^{1+\eps}$ by Corollary \ref{cor:bndbasic}\begin{align}
\nonumber |\langle T_n \{\tfrac{x}{p} , \infty \}, \mathbb{P}_\mathrm{cusp}\, \omega \rangle|
\nonumber &\leq \sum_{\substack{ad=n,\\ (a,p)=1}}\sum_{0\leq b <d}  \left|\left\langle \left\{ \frac{ax+bp}{pd}, \infty  \right\}, \mathbb{P}_\mathrm{cusp}\,\omega \right\rangle\right|\\
\label{eq:redsupnorm} & \ll  \sum_{\substack{ad=n,\\ (a,p)=1}}\sum_{0\leq b <d}  \log pd \ll_\eps p^{1+\eps}  .
\end{align}
There we are using that $(ax+bp,p)=1$ meaning that $\frac{ ax+bp}{pd}$ is of the shape $\gamma \infty$ for $\gamma \in \Gamma_0(p)$ with left lower entry of size $O(pd)$. By changing contours we have the following equality of compactly supported homology classes;
$$\{\tfrac{1}{x},0\}=\{\begin{psmallmatrix} \overline{x}& 1\\ 1-x\overline{x}&x\end{psmallmatrix}0,0\} =\{\begin{psmallmatrix} \overline{x}& 1\\ 1-x\overline{x}&x\end{psmallmatrix}\infty,\infty\}=\{ \tfrac{-\overline{x}}{x\overline{x}-1} ,\infty\},\quad 0<\overline{x}<p: x\overline{x}\equiv 1\modulo p.$$
A similar argument as above again using Corollary \ref{cor:bndbasic} yields  
$$|\langle T_n \{\tfrac{1}{x} , \infty \}, \mathbb{P}_\mathrm{cusp}\, \omega \rangle|\ll_\eps p^{1+\eps}, \quad n\leq p^{1+\eps}, \omega\in B^\ast.$$
Thus by Lemma \ref{lem:theta} we conclude as wanted 
\begin{align*}
\sum_{f\in \mathcal{B}_p} | \langle v_f^{\pm}, \omega\rangle|^2 \ll_\eps p^{-1+\eps}\sum_{n\geq 1} \frac{p^{2+\eps}}{n} e^{-8 n/p} 
\ll_\eps p^{1+\eps} ,\quad \omega\in  B^{\ast}.\end{align*}
\end{proof} 
 \section{Proof of level aspect version}\label{sec:levelp}
   
Using the second moment bound from Theorem \ref{thm:maincontr} we are ready to prove our main result. 
\begin{thm}\label{thm:maingeneral} Let $p$ be prime and let $K$ be a real quadratic field of discriminant $d_K $ with no unit of norm $-1$ such that $p$ splits in $K$ with $p\mathcal{O}_K=\mathfrak{p}_1\mathfrak{p}_2$ and $\mathfrak{p}_1\notin H$ where $H=(\Cl^+_K)^2$. 

For ${B}$ a basic basis of level $p$ and $\omega\in  B^{\ast}$, we have   
\begin{equation}\label{eq:thm1general2}  \frac{\sum_{A\in H} \langle [\mathcal{C}_{A}(p)], \omega \rangle}{|\sum_{A\in H} \langle [\mathcal{C}_{A}(p)], \omega_{E}(p) \rangle|}= -\langle v_{E}(p), \omega\rangle+O_{\eps}(d_K^{-1/12+\eps}p^{2+\eps}), \end{equation}
where $v_{E}(p),\omega_{E}(p)$ denotes the Eisenstein classes in homology and cohomology as defined in (\ref{eq:eisensteinclasshom}) and (\ref{eq:eisensteinclass}), respectively.
\end{thm}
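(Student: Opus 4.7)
The plan is to follow the automorphic strategy of the proof of Theorem \ref{thm:std}, with two key modifications to achieve uniformity in the level aspect: first, the spectral coefficients $\langle v_f^\pm,\omega\rangle$ (which were previously absorbed into an $\omega$-dependent constant) must be controlled via the level-uniform second moment bound of Theorem \ref{thm:maincontr}, and second, the subconvexity bound on the twisted Rankin--Selberg $L$-values must be hybrid in the $(p,d_K)$-aspects. Starting from the Hecke-spectral decomposition (\ref{eq:Wald1}) with $C=1$, the numerator of the left-hand side of (\ref{eq:thm1general2}) splits into an Eisenstein piece $\langle v_E(p),\omega\rangle\sum_{A\in H}\langle [\mathcal{C}_A(p)],\omega_E(p)\rangle$ and a cuspidal piece $\sum_{f,\pm}\langle v_f^\pm,\omega\rangle W_f^\pm$. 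Since $K$ has no unit of norm $-1$ we have $J\neq I$ and hence $J\notin H=(\Cl_K^+)^2$, and together with the hypothesis $\mathfrak{p}_1\notin H$ this lets Proposition \ref{prop:lowerbound} provide the denominator estimate $|\sum_{A\in H}\langle [\mathcal{C}_A(p)],\omega_E(p)\rangle|\gg_\eps d_K^{1/2-\eps}/p$.

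The heart of the argument is to apply Cauchy--Schwarz in the form
\begin{equation*}
\Bigl|\sum_{f,\pm}\langle v_f^\pm,\omega\rangle W_f^\pm\Bigr|^2 \le \Bigl(\sum_{f,\pm}|\langle v_f^\pm,\omega\rangle|^2\Bigr)\Bigl(\sum_{f,\pm}|W_f^\pm|^2\Bigr).
\end{equation*}
The first factor is exactly the quantity bounded by $\ll_\eps p^{1+\eps}$ in Theorem \ref{thm:maincontr} (which is why the hypothesis that $B$ is a basic basis of level $p$ is indispensable). For the second factor I would perform a further Cauchy--Schwarz in the character variable, using that the set of $\chi$ with $\chi_{|H}=1$ coincides with the genus characters of $K$ and so has size $\ll_\eps d_K^\eps$, and then apply Popa's formula (\ref{eq:Wald2}) to convert $|W_f^\pm|^2$ into a sum of central Rankin--Selberg values $L(\pi_f\otimes \pi_\chi,1/2)$.

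Since each $\pi_\chi$ with $\chi$ a genus character factors, these central values split as products of quadratic-twisted values of $L(f,\cdot)$, to which one can apply the hybrid Weyl-type subconvexity bound of Petrow--Young \cite{PeYo19}. Summing the resulting pointwise estimates over $f\in\mathcal{B}_p$ (a set of size $\ll p$) and combining via the Cauchy--Schwarz above with the sup-norm bound yields a cuspidal piece of size $\ll_\eps p^{1+\eps}d_K^{5/12+\eps}$. Dividing by the denominator lower bound $\gg_\eps d_K^{1/2-\eps}/p$ then produces the claimed error $O_\eps(p^{2+\eps}d_K^{-1/12+\eps})$, and the remaining identity (with no cuspidal terms) $\sum_A\langle [\mathcal{C}_A(p)],\omega_E(p)\rangle\cdot\langle v_E(p),\omega\rangle/|\sum_A\langle\ldots\rangle|=-\langle v_E(p),\omega\rangle$ follows from the sign in (\ref{eq:lowerbound}).

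The main obstacle is securing Theorem \ref{thm:maincontr}, namely the level-uniform second moment $\sum_{f,\pm}|\langle v_f^\pm,\omega\rangle|^2\ll_\eps p^{1+\eps}$ for $\omega\in B^\ast$. The dual vectors $v_f^\pm$ encode the minimal periods $c_f^\pm$ of the weight $2$ newforms of level $p$, which are notoriously difficult to bound uniformly (any polynomial upper bound would imply a weak form of the ABC conjecture), so a direct spectral approach is hopeless. As explained in Section \ref{sec:amplpretrace}, the idea is to bypass this by exploiting the theta correspondence to replace the spectral sum with an $L^2$-norm of a theta kernel built from additive-twist classes $\{x/p,\infty\}$, which is then estimated via a counting argument using the geometric coding of geodesics in Zagier's fundamental polygon together with a first-moment lower bound on additive twists of $L(f,\cdot)$.
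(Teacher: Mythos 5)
Your proposal follows the paper's proof almost verbatim in structure: the decomposition (\ref{eq:Wald1}) with $C=1$, the observation that no unit of norm $-1$ forces $J\neq I$ and hence $J\notin(\Cl_K^+)^2$ so that Proposition \ref{prop:lowerbound} gives the denominator bound $\gg_\eps d_K^{1/2-\eps}p^{-1}$, the Cauchy--Schwarz over $(f,\pm)$ paired with Theorem \ref{thm:maincontr}, and the reduction via Popa's formula and the genus-character factorization $L(\pi_f\otimes\pi_\chi,s)=L(f\otimes\chi_{d_1},s)L(f\otimes\chi_{d_2},s)$ to Petrow--Young. The one genuine gap is in the very last step: you apply the \emph{pointwise} Weyl-type bound $L(f\otimes\chi_{d_i},1/2)\ll_\eps (pd_i)^{1/3+\eps}$ to each term and then sum trivially over the $\asymp p$ forms in $\mathcal{B}_p$. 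This gives
$$\sum_{f\in\mathcal{B}_p}L(f\otimes\chi_{d_1},\tfrac12)L(f\otimes\chi_{d_2},\tfrac12)\ll_\eps p\cdot p^{2/3+\eps}d_K^{1/3+\eps}=p^{5/3+\eps}d_K^{1/3+\eps},$$
whereas your claimed cuspidal bound $\ll_\eps p^{1+\eps}d_K^{5/12+\eps}$ (and hence the final error $p^{2+\eps}d_K^{-1/12+\eps}$) requires this sum to be $\ll_\eps p^{1+\eps}d_K^{1/3+\eps}$. You are short by a factor $p^{2/3}$ here, i.e.\ $p^{1/3}$ after taking square roots, so the argument as written only yields an error term $O_\eps(p^{7/3+\eps}d_K^{-1/12+\eps})$ and does not prove the stated exponent.

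The fix is exactly what the paper does (display (\ref{eq:boundcuspidal})): keep the sum over $f$ intact and apply H\"older's inequality with exponents $(3,3,3)$,
$$\sum_{f\in\mathcal{B}_p}L_1L_2\leq\Bigl(\sum_{f\in\mathcal{B}_p}1\Bigr)^{1/3}\Bigl(\sum_{f\in\mathcal{B}_p}L_1^3\Bigr)^{1/3}\Bigl(\sum_{f\in\mathcal{B}_p}L_2^3\Bigr)^{1/3}\ll_\eps p^{1/3}\bigl((pd_1)(pd_2)\bigr)^{1/3+\eps}=p^{1+\eps}d_K^{1/3+\eps},$$
using the Petrow--Young \emph{cubic moment} over the full family (together with nonnegativity of the central values) rather than its pointwise corollary. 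Exploiting the moment over the family is what saves the extra $p^{2/3}$; deducing a pointwise bound by positivity and then re-summing throws that saving away. Everything else in your outline, including the identification of Theorem \ref{thm:maincontr} as the essential new level-uniform input and the theta-correspondence strategy behind it, is consistent with the paper.
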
 

\begin{proof}
Starting from (\ref{eq:Wald1}), we want to bound the cuspidal contribution on the righthand side, which by (\ref{eq:Wald2}) can be expressed in terms of $L$-functions. Recall that class group characters $\chi$ such that $\chi_{|(\Cl_K^+)^2}$ (i.e. a genus characters) correspond to factorizations $d_1d_2=d_K$ in terms of fundamental discriminants, and we have
$$L(f\otimes \pi_\chi,s)=L(f\otimes \chi_{d_1},s)L(f\otimes \chi_{d_2},s),$$
where $\chi_{d_i}=(\tfrac{d_i}{\cdot})$ are quadratic characters. Now we apply Cauchy--Schwarz followed by H\"{o}lder's inequality with exponents $(3,3,3)$ combined with Theorem \ref{thm:maincontr} as well as the third moment bound of Petrow and Young \cite[Theorem 1]{PeYo19};
\begin{align}
\nonumber &\left|\sum_{A\in H} \langle [\mathcal{C}_{A}(p)], \mathbb{P}_\mathrm{cusp}\, \omega \rangle\right|^2\\
 \nonumber&\ll d_K^{1/2+\eps}\left(\sum_{f\in \mathcal{B}_p, \pm} |\langle v_f^{\pm}, \omega\rangle |^2\right) \left(\sum_{d_1d_2=d_K } \sum_{f\in \mathcal{B}_p}L(f\otimes \chi_{d_1},1/2) L(f\otimes \chi_{d_2},1/2)\right)\\
 \nonumber &\ll d_K^{1/2+\eps} \left(\sum_{f\in \mathcal{B}_p, \pm} |\langle v_f^{\pm}, \omega\rangle |^{2}\right)  \\
\label{eq:boundcuspidal}  &\qquad \qquad \qquad \times\sum_{d_1d_2=d_K }  p^{1/3}\left(\sum_{f\in \mathcal{B}_p} L(f\otimes \chi_{d_1},1/2)^3 \right)^{1/3}  \left(\sum_{f\in \mathcal{B}_p} L(f\otimes \chi_{d_2},1/2)^3 \right)^{1/3}\\
\nonumber &\ll_\eps p^{2+\eps} d_K^{5/6+\eps} , 
\end{align}
for all $\omega\in B^{\ast}$ (using here also that $L(f\otimes \chi_{d_i},1/2)\geq 0$). It follows from Proposition \ref{prop:lowerbound} that the Eisenstein contribution on the righthand side of (\ref{eq:Wald1}) is $\gg_\eps d_K^{1/2-\eps} p^{-1}$. Inserting all of this into (\ref{eq:Wald1}) yields as wanted. 
\end{proof}

Recall that given a basis $B\subset V_p=H_1(Y_0(p),\R)$ we get an associated isomorphism $V_p\cong \R^{2g+1}$ by mapping $B$ to the standard basis of $\R^{2g+1}$. By pulling back the $L^r$-norm with respect to the standard basis of $\R^{2g+1}$ we get the following norm on $V_p$
\begin{equation}\label{eq:normsB}|\!|v |\!|_{B,r}:=\left(\sum_{\omega\in  B^{\ast}}|\langle v,\omega  \rangle|^r\right)^{1/r}\text{ for } 1\leq r<\infty, \quad |\!|v |\!|_{B,\infty}:=\sup_{\omega\in  B^{\ast}}|\langle v,\omega  \rangle|, \end{equation}
with $B^\ast\subset H^1(Y_0(p),\R)$ the dual basis of $B$. Combining Theorem \ref{thm:maingeneral} and equations (\ref{eq:supnormcalc}), (\ref{eq:translationtonorm}) we arrive at the following (we will skip the details).

\begin{thm}\label{thm:maingen} 
Let $p$ be prime and let $K$ be a real quadratic field of discriminant $d_K $ with no unit of norm $-1$ such that $p$ splits in $K$ with $p\mathcal{O}_K=\mathfrak{p}_1\mathfrak{p}_2$ and $\mathfrak{p}_1\notin H$ where $H=(\Cl^+_K)^2$. 

For $B$ a basic basis of level $p$ we have   
\begin{equation}\label{eq:thm1bsc}  \left|\! \left|\frac{\sum_{A\in H} \langle [\mathcal{C}_{A}(p)]\rangle}{|\!|\sum_{A\in H} \langle [\mathcal{C}_{A}(p)]\rangle|\!|_{B,\infty}}+v_{E}(p)\right|\! \right|_{B,\infty} \ll_{\eps} d_K^{-1/12+\eps}p^{2+\eps} . \end{equation}
\end{thm}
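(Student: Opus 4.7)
The plan is to derive Theorem \ref{thm:maingen} from Theorem \ref{thm:maingeneral} first, and then attack Theorem \ref{thm:maingeneral} by spectral/Hecke expansion in $H^1(Y_0(p),\R)$, just as in the proof of Theorem \ref{thm:std3} but now with quantitative control in both $d_K$ and $p$. Specifically, I would imitate the deduction following the statement of Theorem \ref{thm:std}: apply the triangle-inequality identity (\ref{eq:translationtonorm}) together with the sup-norm expansion (\ref{eq:supnormcalc}) using $\|v_E(p)\|_{B,\infty}=1$, so that the desired bound (\ref{eq:thm1bsc}) reduces to establishing, uniformly for $\omega\in B^\ast$, the pointwise asymptotic
\[
\frac{\sum_{A\in H}\langle [\mathcal C_A(p)],\omega\rangle}{|\sum_{A\in H}\langle [\mathcal C_A(p)],\omega_E(p)\rangle|}=-\langle v_E(p),\omega\rangle+O_\eps\bigl(d_K^{-1/12+\eps}p^{2+\eps}\bigr),
\]
which is exactly Theorem \ref{thm:maingeneral}.

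For Theorem \ref{thm:maingeneral} I would start from the Hecke-basis decomposition (\ref{eq:Wald1}) with $C=1$. This cleanly splits the weighted sum into the Eisenstein piece $\langle v_E(p),\omega\rangle\sum_A\langle [\mathcal C_A(p)],\omega_E(p)\rangle$ plus a cuspidal remainder indexed by the newforms $f\in\mathcal B_p$ and signs $\pm$. The denominator is handled by Proposition \ref{prop:lowerbound}, which under the hypothesis $\mathfrak{p}_1\notin H=(\Cl_K^+)^2$ and $J\notin H$ (automatic because $K$ has no unit of norm $-1$) gives $|\sum_{A\in H}\langle [\mathcal C_A(p)],\omega_E(p)\rangle|\gg_\eps d_K^{1/2-\eps}/p$. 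So the whole game is to bound the cuspidal remainder by $O_\eps(p^{1+\eps}d_K^{5/12+\eps})$.

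The cuspidal bound is the main obstacle and I would handle it by Cauchy--Schwarz over $(f,\pm)$, so that it factors as $(\sum_{f,\pm}|\langle v_f^\pm,\omega\rangle|^2)^{1/2}$ times the $\ell^2$-norm of the twisted Weyl sums. The first factor is exactly the left-hand side of (\ref{eq:2ndmomentboundMin}), so Theorem \ref{thm:maincontr} contributes $p^{1/2+\eps}$. For the second factor, Popa's identity (\ref{eq:Wald2}) converts $|\sum_{A\in \Cl_K^+}\langle [\mathcal C_A(p)],\omega_f^\pm\rangle\overline{\chi}(A)|^2$ into $d_K^{1/2}L(\pi_f\otimes\pi_\chi,1/2)$, with the positivity needed to apply moment estimates. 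Because $H=(\Cl_K^+)^2$, the characters trivial on $H$ are precisely the genus characters, so Rankin--Selberg factors as $L(f\otimes\chi_{d_1},1/2)\,L(f\otimes\chi_{d_2},1/2)$ over fundamental discriminant factorizations $d_1d_2=d_K$. I would then apply H\"older with exponents $(3,3,3)$ over $f\in\mathcal B_p$ to pull the product apart, pick up a combinatorial factor $|\mathcal B_p|^{1/3}\ll p^{1/3+\eps}$, and invoke the Petrow--Young cubic-moment bound $\sum_f L(f\otimes\chi_d,1/2)^3\ll_\eps(pd)^{1+\eps}$ \cite{PeYo19} on each factor.

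Bookkeeping the exponents: the Popa factor gives $d_K^{1/2+\eps}$, Theorem \ref{thm:maincontr} gives $p^{1+\eps}$, and the H\"older/Petrow--Young step contributes $p^{1/3}\cdot(p^{1+\eps}d_K^{1+\eps})^{2/3}=p^{1+\eps}d_K^{2/3+\eps}$; taken together and square-rooted (with an extra $d_K^\eps$ for the divisor sum over $d_1d_2=d_K$) this controls $|\sum_{A\in H}\langle[\mathcal C_A(p)],\omega-\langle v_E(p),\omega\rangle\omega_E(p)\rangle|$ by $p^{1+\eps}d_K^{5/12+\eps}$. Dividing by the Eisenstein lower bound $\gg d_K^{1/2-\eps}/p$ produces exactly the saving $p^{2+\eps}d_K^{-1/12+\eps}$. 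The key structural obstacle, and the reason the result requires the hypotheses stated, is the simultaneous need for (a) a subconvex second-moment bound on modular symbols in the dual cuspidal basis, supplied by Theorem \ref{thm:maincontr} via the theta-lift argument, and (b) Weyl-strength subconvexity on the joint $(p,d_K)$ level for genus-character twists, supplied by Petrow--Young; any lossier input in either slot degrades the $p^2$ or the $d_K^{-1/12}$ to something that no longer certifies concentration on the Eisenstein line.
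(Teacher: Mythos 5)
Your overall plan matches the paper exactly: reduce Theorem \ref{thm:maingen} to Theorem \ref{thm:maingeneral} via (\ref{eq:translationtonorm}) and (\ref{eq:supnormcalc}), then start from (\ref{eq:Wald1}), bound the Eisenstein denominator by Proposition \ref{prop:lowerbound}, Cauchy--Schwarz the cuspidal part, feed Theorem \ref{thm:maincontr} into the dual-basis factor, factor the Rankin--Selberg $L$-function over genus characters, and close with H\"older (3,3,3) and the Petrow--Young cubic moment. Your stated final bound $p^{1+\eps}d_K^{5/12+\eps}$ for the cuspidal numerator is correct and matches the paper's $\ll_\eps p^{2+\eps}d_K^{5/6+\eps}$ before the square root.

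However, the displayed bookkeeping for the H\"older/Petrow--Young step contains a real error that, taken literally, kills the theorem. You write the contribution as $p^{1/3}\cdot(p^{1+\eps}d_K^{1+\eps})^{2/3}=p^{1+\eps}d_K^{2/3+\eps}$, which corresponds to bounding each conductor $d_1,d_2$ separately by $d_K$. That is too lossy. The essential point --- and the reason the genus-character structure is used at all --- is that $d_1d_2=d_K$, so each cubic-moment factor gives $(pd_i)^{(1+\eps)/3}$, and their product is $p^{2/3+\eps}(d_1d_2)^{1/3+\eps}=p^{2/3+\eps}d_K^{1/3+\eps}$, which together with the extra $p^{1/3}$ yields $p^{1+\eps}d_K^{1/3+\eps}$, not $p^{1+\eps}d_K^{2/3+\eps}$. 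With your $d_K^{2/3}$ the total squared numerator would be $p^{2+\eps}d_K^{7/6+\eps}$; square-rooting and dividing by the Eisenstein lower bound $\gg_\eps d_K^{1/2-\eps}/p$ gives $p^{2+\eps}d_K^{+1/12+\eps}$, which grows in $d_K$ and fails to certify concentration. The correct $d_K^{1/3}$ gives the paper's $p^{2+\eps}d_K^{5/6+\eps}$, hence $p^{2+\eps}d_K^{-1/12+\eps}$ after dividing. So your approach and your final claim are right, but the intermediate line is wrong as written and should record the multiplicative saving from $d_1d_2=d_K$ explicitly.
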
 

\section{Applications}\label{sec:applproofs}
In this section we will present certain applications of Theorem \ref{thm:maingen}.
\subsection{A group theoretic application}\label{sec:group}
Recall the independent generators of $\Gamma_0(p)$ described in Section \ref{sec:kul} coming from special fundamental polygons. Given a real quadratic field $K$ of discriminant $d_K $ such that $p$ splits in $K$, the oriented closed geodesics $\mathcal{C}_{A}(p)$ associated to $A\in \Cl_K^+$ corresponds to a conjugacy class of matrices inside $\Gamma_0(p)$ (given by the matrices $\gamma_Q$ defined in (\ref{eq:gammaQ}) where $Q\in \mathcal{Q}_{K,p}$ runs through integral binary quadratic form of discriminant $d_K $ and level $p$ corresponding to $A$ under the identification (\ref{eq:Gaussiso})). We will apply our results to understand the representation of the matrices $\gamma_Q$ in terms of the independent generators (\ref{eq:basisT}) (\ref{eq:basise2}), (\ref{eq:basise3}), (\ref{eq:basisK2}) coming from a special fundamental domain of $\Gamma_0(p)$. 
\begin{cor}\label{cor:groupgen}
Let $p$ be prime and let $\mathcal{P}(p)$ be a special fundamental polygon for $\Gamma_0(p)$. Consider a real quadratic field $K$ of discriminant $d_K $ with no unit of norm $-1$ such that $p$ splits in $K$ with $p\mathcal{O}_K=\mathfrak{p}_1\mathfrak{p}_2$ and such that $\mathfrak{p}_1 \notin (\Cl_K^+)^2$. Then for $d_K \gg_\eps p^{24+\eps}$ there is some class $A\in (\Cl_K^+)^2$ such that \underline{non} of the matrices $\gamma_Q\in \Gamma_0(p)$ with $Q\in \mathcal{Q}_{K,p}$ corresponding to $A$ are contained in the subgroup generated by the matrices in (\ref{eq:basise2}), (\ref{eq:basise3}), (\ref{eq:basisK2}).
\end{cor}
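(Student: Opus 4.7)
My plan is to recast the group-theoretic non-containment as a non-vanishing statement on integral homology and then to invoke the level-aspect convergence of Theorem \ref{thm:maingen}.

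\emph{Homological reformulation.} First I would let $\Gamma_1 \leq \Gamma_0(p)$ denote the subgroup generated by the matrices (\ref{eq:basise2}), (\ref{eq:basise3}), (\ref{eq:basisK2}) attached to $\mathcal{P}(p)$. By the Kulkarni presentation recalled in Section \ref{sec:kul}, $T$ from (\ref{eq:basisT}) together with these matrices form a complete set of independent generators of $\Gamma_0(p)$, with (\ref{eq:basise2})--(\ref{eq:basise3}) accounting for all the torsion. Under the abelianisation map (\ref{eq:projab}) the torsion generators die, and the images $h_1,\ldots,h_{2g}$ of the hyperbolic generators (\ref{eq:basisK2}) together with $v_E(p)$ form a $\Z$-basis $B_\mathcal{P} = \{v_E(p),h_1,\ldots,h_{2g}\}$ of $H_1(Y_0(p),\Z)$. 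In particular, the image of $\Gamma_1$ is contained in $\Z\langle h_1,\ldots,h_{2g}\rangle$, so every element of $\Gamma_1$ has vanishing $v_E(p)$-coordinate in $B_\mathcal{P}$. Since all representatives $\gamma_Q$ of a class $A$ project to the same class $[\mathcal{C}_A(p)]$, by linearity it suffices to show that the averaged class $\sum_{A \in (\Cl_K^+)^2}[\mathcal{C}_A(p)]$ has non-zero $v_E(p)$-coordinate in $B_\mathcal{P}$: at least one summand must then contribute non-trivially, and any $\gamma_Q$ representing it lies outside $\Gamma_1$.

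\emph{Non-vanishing via the level-aspect theorem.} The $v_E(p)$-coordinate in $B_\mathcal{P}$ is obtained by pairing with the dual basis element $T_{\mathcal{P}}^\ast \in B_{\mathcal{P}}^\ast$, which decomposes via (\ref{eq:franke}) as $T_{\mathcal{P}}^\ast = \omega_E(p) + \omega_{\mathcal{P}}^{\mathrm{cusp}}$ with $\omega_{\mathcal{P}}^{\mathrm{cusp}} \in H^1(X_0(p),\R)$. Hence the $v_E(p)$-coordinate of the averaged class splits as the Eisenstein pairing $\langle\sum[\mathcal{C}_A(p)], \omega_E(p)\rangle$ plus the cuspidal correction $\langle\sum[\mathcal{C}_A(p)], \omega_{\mathcal{P}}^{\mathrm{cusp}}\rangle$. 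By Proposition \ref{prop:lowerbound} the Eisenstein term is $\gg_\eps d_K^{1/2-\eps}p^{-1}$ in absolute value, whereas the cuspidal correction is handled exactly as in the proof of Theorem \ref{thm:maingeneral}: expanding $\omega_{\mathcal{P}}^{\mathrm{cusp}}$ in the Hecke basis $\{\omega_f^\pm\}$ and applying Cauchy--Schwarz together with the Petrow--Young third-moment bound and the sup-norm second-moment estimate, one obtains a bound of order $O_\eps(p^{1+\eps} d_K^{5/12+\eps})$ as in (\ref{eq:boundcuspidal}). Non-vanishing then holds whenever $p^{1+\eps} d_K^{5/12+\eps} < d_K^{1/2-\eps}p^{-1}$, that is for $d_K \gg_\eps p^{24+\eps}$, as required.

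\emph{Main obstacle.} The key input, and where most of the work lies, is the sup-norm second-moment estimate $\sum_{f,\pm}|\langle v_f^\pm, \omega_{\mathcal{P}}^{\mathrm{cusp}}\rangle|^2 \ll_\eps p^{1+\eps}$. When $\mathcal{P}(p)$ is a minimal special fundamental polygon in the sense of Doan--Kim--Lang--Tan (as cited in Section \ref{sec:kul}), $B_\mathcal{P}$ is a basic basis of level $p$ in the sense of Definition \ref{defi:bsc} and this is precisely Theorem \ref{thm:maincontr}. For a general $\mathcal{P}(p)$ the hyperbolic generators (\ref{eq:basisK2}) may have lower-left entries much larger than $p$, so the theta-correspondence and geometric-coding argument underlying Theorem \ref{thm:maincontr} (via Lemma \ref{lem:geom} and Corollary \ref{cor:bndbasic}) would have to be rerun with respect to $\mathcal{P}(p)$ itself, and the bound (\ref{eq:BoundEisenstein}) on the Eisenstein pairing exploited carefully for the sides of $\mathcal{P}(p)$; this is where I expect the principal technical difficulty to sit.
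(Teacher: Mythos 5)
Your homological reformulation and the final numerology are exactly right, but the proof as written has a genuine gap at the step you yourself flag: the second-moment bound $\sum_{f,\pm}|\langle v_f^{\pm},\omega_{\mathcal{P}}^{\mathrm{cusp}}\rangle|^2\ll_\eps p^{1+\eps}$ is only available (via Theorem \ref{thm:maincontr}) when the dual class comes from a \emph{basic} basis in the sense of Definition \ref{defi:bsc}, i.e.\ when the side pairing transformations lie in $\mathcal{S}(\mathcal{F}_\mathrm{Zag}(p))$. For a general special fundamental polygon the hyperbolic generators (\ref{eq:basisK2}) can have entries far larger than $p$, so your argument only covers the minimal polygons of Doan--Kim--Lang--Tan, whereas the corollary is asserted for every special fundamental polygon. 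Deferring this to "rerunning the theta-correspondence argument for $\mathcal{P}(p)$" is not a proof: the counting in Lemma \ref{lem:geom} and Corollary \ref{cor:bndbasic} genuinely uses that the relevant cusps are of the form $x/p$ with $0<x<p$, and it is not clear the argument survives for large entries.

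The paper avoids ever needing a sup-norm bound for $\omega_0=T_{\mathcal{P}}^{\ast}$. Instead of splitting $\omega_0$ into Eisenstein plus cuspidal parts, it expands the \emph{normalized homology class} in a basic basis $B$ containing $v_E(p)$ and applies Theorem \ref{thm:maingen} coordinatewise; the resulting error is $O_\eps\bigl(d_K^{-1/12+\eps}p^{2+\eps}\sum_{v\in B}|\langle v,\omega_0\rangle|\bigr)$. The missing idea is then a purely geometric orthogonality: for every $v=\{z,\gamma_v z\}\in B\setminus\{v_E(p)\}$ with $\gamma_v\in\mathcal{S}(\mathcal{F}_\mathrm{Zag}(p))$ one has $\langle v,\omega_0\rangle=0$. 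This is proved by geometric coding (Corollary \ref{cor:geocoding}): $|\langle v,\omega_0\rangle|$ is at most the number of intersections of a suitable path from $\gamma_v^{-1}\infty+iY$ to $\gamma_v(\gamma_v^{-1}\infty+iY)$ with $\Gamma_0(p)$-translates of the vertical side $\{iy:y>0\}$ of $\mathcal{P}(p)$, and the Farey-type inequalities $|\gamma_v\infty-\tfrac{a}{c}|\geq\tfrac{1}{|c|}$ and $|\tfrac{b}{d}-\tfrac{a}{c}|=\tfrac{1}{|cd|}$ show no such intersection can occur (here one uses $0<\gamma_v\infty,\gamma_v^{-1}\infty<1$, which holds precisely because the Zagier generators have lower-left entry $p$). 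With that, the error collapses to $O_\eps(d_K^{-1/12+\eps}p^{2+\eps})$ against a main term of size $1$, and the conclusion follows for all special polygons simultaneously. You would need to supply this orthogonality (or an equivalent substitute) to close your argument in the generality claimed.
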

\begin{proof}
Let $B_\mathrm{sp}\subset H_1(Y_0(p),\R)$ be a basis obtained from the side pairing transformations of $\mathcal{P}(p)$. Note that $v_{E}(p)\in B_\mathrm{sp}$ and let $\omega_0\in  B_\mathrm{sp}^{\ast}$ be characterized by $\langle v_{E}(p),\omega_0\rangle=1$. 
Let ${B}$ be a basic basis of level $p$ containing $v_{E}(p)$. Then by expanding in this basis we get by Theorem \ref{thm:maingen} that

\begin{align}
\nonumber\left\langle \frac{\sum_{A\in H}[\mathcal{C}_{A}(p)]}{|\!|\sum_{A\in H}[\mathcal{C}_{A}(p)]|\!|} , \omega_0 \right\rangle&=-\left\langle \frac{v_{E}(p)}{|\!|v_{E}(p)|\!|}, \omega_0 \right\rangle+ \left\langle \frac{\sum_{A\in H}[\mathcal{C}_{A}(p)]}{|\!|\sum_{A\in H}[\mathcal{C}_{A}(p)]|\!|}+\frac{v_{E}(p)}{|\!|v_{E}(p)|\!|}, \omega_0 \right\rangle  \\
\nonumber &=-1+\sum_{v\in B}\left\langle v, \omega_0 \right\rangle \left\langle \frac{\sum_{A\in H}[\mathcal{C}_{A}(p)]}{|\!|\sum_{A\in H}[\mathcal{C}_{A}(p)]|\!|}+\frac{v_{E}(p)}{|\!|v_{E}(p)|\!|}, v^\ast  \right\rangle \\
\label{eq:grouptheory}&= -1+ O_\eps\left( d_K^{-1/12+\eps}p^{2+\eps}\sum_{v\in B}|\langle v, \omega_0 \rangle |\right),
 \end{align}
where $|\!|\cdot |\!|=|\!|\cdot |\!|_{B,\infty}$ and $v^\ast\in B^\ast$ is characterized by $\langle v, v^\ast\rangle=1$ 
For $v\in {B}-\{v_{E}(p)\}$, let $\gamma_v\in \mathcal{S}(\mathcal{F}_\mathrm{Zag}(p))$ be such that 
$$v=\{z,\gamma_v z\}\in H_1(Y_0(p),\R),$$ using the notation (\ref{eq:def}) (note that the lower left entry of $\gamma_v$ is equal to $p$). Now for $Y>0$ consider a curve $c_Y: [0,1] \rightarrow \Hb$ connecting the three points 
$$\gamma_v^{-1} \infty +iY ,\quad  \gamma_v \infty +iY,\quad \gamma_v(\gamma_v^{-1} \infty +iY)=\gamma_v\infty+ip^{-2}Y^{-1},$$
by (Euclidean) straight line. We assume that $Y$ is large enough so that $$ \{x +iY:\min (\gamma_v\infty,\gamma_v^{-1} \infty) < x < \max(\gamma_v\infty,\gamma_v^{-1} \infty)\}\subset \mathcal{P}(p).$$
Observe that by Corollary \ref{cor:geocoding} the quantity $|\langle v, \omega_0 \rangle |$ is bounded by the number of intersection between $c_Y$ and $\Gamma_0(p)$-translates of $\{iy:y>0\}$ (which is a side of the special fundamental polygon $\mathcal{P}(p)$). Since $0<\gamma_v\infty, \gamma_v^{-1}\infty<1$ there is no such intersection for the horizontal segment of $c_Y$. Intersections with the vertical segment corresponds to integers $a,b,c,d$ such that
\begin{equation}\label{eq:verticalsegment}p|c,\qquad ad-bc=1,\qquad\text{and}\quad  \frac{a}{c}<\gamma_v\infty <\frac{b}{d}\text{ or }\frac{b}{d}<\gamma_v\infty <\frac{a}{c}.\end{equation}
In particular we have $\gamma_v\neq \frac{a}{c}$ which implies 
$$\left|\gamma_v\infty-\frac{a}{c}\right|\geq \frac{1}{|c|},\qquad \left|\frac{b}{d}-\frac{a}{c}\right|=\frac{1}{|cd|},$$ 
but this contradicts (\ref{eq:verticalsegment}). This implies that $\langle v, \omega_0 \rangle=0$ for $v\in {B}-\{v_{E}(p)\}$ which by (\ref{eq:grouptheory}) yields the wanted error-term.   
\end{proof}
From the above Corollary \ref{cor:group} follows in the case where $K$ has wide class number one and narrow class number two by writing out explicitly the matrices $\gamma_Q$ associated to $Q\in \mathcal{Q}_{K,p}$ as in (\ref{eq:gammaQ}) and recalling that since $K$ does not have a unit of norm $-1$ the condition $\mathfrak{p}_1 \notin (\Cl_K^+)^2$ is equivalent to $\mathfrak{p}_1$ not having a generator of positive norm. 

\subsection{An application to modular forms}
Recall the following definition mentioned in the introduction for a modular form $f\in \mathcal{M}_2(p)$:
$$M_{f}:=\inf \{c\geq 0: |a_f(n)|\leq c \sigma_1(n) ,\, n\geq 1\}<\infty, $$  
where $a_f(n)$ denotes the Fourier coefficients of $f$ (at $\infty$) and $\sigma_1(n)=\sum_{d|n}d$.  
We have the following non-vanishing result for cycle integrals of modular forms.
 
\begin{cor}\label{cor:modular2}
Let $p$ be prime and let $f\in \mathcal{M}_2(p)$ be a holomorphic modular form of weight $2$ and level $p$ with constant Fourier coefficient equal to $a_f(0)=1$.
Consider a real quadratic field $K$ of discriminant $d_K $  with no unit of norm $-1$ such that $p$ splits in $K$ with $p\mathcal{O}_K=\mathfrak{p}_1\mathfrak{p}_2$ and  $\mathfrak{p}_1 \notin (\Cl_K^+)^2$. Then for $d_K \gg_\eps (M_{f})^{12+\eps} p^{48+\eps}$ there is some $A\in (\Cl_K^+)^2$ such that   
$$\langle [\mathcal{C}_{A}(p)], f(z)dz\rangle= \int_{\mathcal{C}_{A}(p)} f(z)dz\neq 0.$$ 
\end{cor}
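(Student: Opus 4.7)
The plan is to prove that $\sum_{A\in H}\int_{\mathcal{C}_A(p)}f(z)dz\neq 0$ for $H=(\Cl_K^+)^2$, from which the conclusion follows by pigeonhole. The hypothesis of no unit of norm $-1$ guarantees that $J=(\sqrt{d_K})$ is non-trivial in $\Cl_K^+$, and (as in the discussion following Proposition \ref{prop:H}) this forces $J\notin (\Cl_K^+)^2$, so together with $\mathfrak{p}_1\notin(\Cl_K^+)^2$ the assumptions of Theorem \ref{thm:maingen} are met. Fix a basic basis $B$ of level $p$ containing $v_E(p)=\{i,i+1\}$, and set $c=\|\sum_{A\in H}[\mathcal{C}_A(p)]\|_{B,\infty}$. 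Theorem \ref{thm:maingen} gives
$$\sum_{A\in H}[\mathcal{C}_A(p)] = -c\, v_E(p) + c\,\eta, \qquad \|\eta\|_{B,\infty}\ll_\eps d_K^{-1/12+\eps}p^{2+\eps}.$$
A direct Fourier computation yields $\langle v_E(p),\omega_f\rangle=\int_i^{i+1}f(z)dz=a_f(0)=1$, so pairing produces $\sum_{A\in H}\int_{\mathcal{C}_A(p)}f(z)dz = -c\bigl(1-\langle\eta,\omega_f\rangle\bigr)$, and the problem reduces to proving $|\langle\eta,\omega_f\rangle|<1$.

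I will control $|\langle\eta,\omega_f\rangle|$ via the Hölder bound $|\langle\eta,\omega_f\rangle|\leq \|\eta\|_{B,\infty}\,\|\omega_f\|_{B^*,1}$ with $\|\omega_f\|_{B^*,1}=\sum_{v\in B}|\langle v,\omega_f\rangle|$. Split $f=E_{2,p}+f_\mathrm{cusp}$ using the $1$-dimensionality of the weight-$2$ Eisenstein subspace of $\mathcal{M}_2(p)$, so that $|a_{f_\mathrm{cusp}}(n)|\leq (M_f+O(1))\sigma_1(n)$ for all $n\geq 1$. Every $v\in B\setminus\{v_E(p)\}$ takes the form $\{z,\sigma z\}$ with a non-parabolic $\sigma=\begin{psmallmatrix}a & -(aa^*+1)/p\\ p & -a^*\end{psmallmatrix}\in\mathcal{S}(\mathcal{F}_\mathrm{Zag}(p))$; for such $\sigma$, the estimate (\ref{eq:BoundEisenstein}) immediately gives $|\int_z^{\sigma z}E_{2,p}(w)dw|\ll 1$ (since $|a-a^*|\leq p$ and $c=p$).

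For the cuspidal contribution, the pointwise bound $|f_\mathrm{cusp}(x+iy)|\leq (M_f+O(1))(1-E_2(iy))/24$ combined with the quasimodular identity $E_2^*(iy)=-y^{-2}E_2^*(i/y)$ gives the asymptotic $\Im(z)|f_\mathrm{cusp}(z)|\ll M_f/\Im(z)$ as $\Im(z)\to 0$. Since a fundamental domain for $\Gamma_0(p)$ lies in $\{\Im z\geq \sqrt{3}/(2p)\}$ by (\ref{eq:funddomainHp}) and $\Im(z)|f_\mathrm{cusp}(z)|$ is $\Gamma_0(p)$-invariant, we obtain the global sup bound
$$\sup_{z\in\Hb}\Im(z)|f_\mathrm{cusp}(z)|\ll M_f\,p.$$
Integrating along the hyperbolic geodesic from $z$ to $\sigma z$, whose length is $O(\log p)$ because $\Im(\sigma i)\asymp 1/p^2$, yields $|\int_z^{\sigma z}f_\mathrm{cusp}(w)dw|\ll M_f\,p\log p$. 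Summing over the $O(p)$ elements of $B$ gives $\|\omega_f\|_{B^*,1}\ll_\eps M_f\,p^{2+\eps}$, so
$$|\langle\eta,\omega_f\rangle|\ll_\eps M_f\,p^{4+\eps}\,d_K^{-1/12+\eps},$$
which is strictly less than $1$ provided $d_K\gg_\eps M_f^{12+\eps}p^{48+\eps}$. The main technical obstacle is the weighted sup-norm estimate $\sup\Im(z)|f_\mathrm{cusp}(z)|\ll M_f\,p$: its $p$-aspect is essentially sharp given only the Ramanujan-type bound $|a_{f_\mathrm{cusp}}(n)|\leq M_f\sigma_1(n)$, and it is this coarse estimate that determines the exponent $48$ in the corollary.
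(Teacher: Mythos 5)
Your overall strategy is the same as the paper's: pair $\sum_{A\in H}[\mathcal{C}_A(p)]$ with $f(z)dz$, use Theorem \ref{thm:maingen} together with $\langle v_E(p), f(z)dz\rangle=a_f(0)=1$, and reduce everything to the estimate $\sum_{v\in B}|\langle v, f(z)dz\rangle|\ll_\eps M_f\,p^{2+\eps}$ for a basic basis $B$. The reduction, the H\"older duality step, and the treatment of the Eisenstein part via (\ref{eq:BoundEisenstein}) are all fine.

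There is, however, a genuine gap in how you bound the cuspidal contribution $\langle v, f_{\mathrm{cusp}}(z)dz\rangle$. You derive the global bound $\sup_{z\in\Hb}\Im(z)|f_{\mathrm{cusp}}(z)|\ll M_f\,p$ from the assertion that ``a fundamental domain for $\Gamma_0(p)$ lies in $\{\Im z\geq \sqrt{3}/(2p)\}$ by (\ref{eq:funddomainHp})''. That is not what (\ref{eq:funddomainHp}) says: the region $\{|\Re z|\leq 1/2,\ \Im z\geq \sqrt{3}/(2p)\}$ contains a fundamental domain only for the Fricke group $\langle W_p,\Gamma_0(p)\rangle$; a fundamental domain for $\Gamma_0(p)$ itself (e.g.\ $\mathcal{F}_{\mathrm{Zag}}(p)$) has the cusp $0$ as a vertex and contains points of arbitrarily small imaginary part. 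Since $\Im(z)|f_{\mathrm{cusp}}(z)|$ is $\Gamma_0(p)$-invariant but not $W_p$-invariant for a general $f\in\mathcal{M}_2(p)$, controlling it near the cusp $0$ requires the Fourier expansion of $f_{\mathrm{cusp}}|W_p$, which is not bounded in terms of $M_f$. This matters for your argument because the hyperbolic geodesic from $i$ to $\sigma i$ descends to height $\asymp p^{-2}$ near $\sigma i$, precisely the range where the unproved sup bound is needed; using only the direct Fourier-expansion estimate $\Im(z)|f_{\mathrm{cusp}}(z)|\ll M_f/\Im(z)$ there gives $M_f p^2$ per point, hence $M_f p^{3+\eps}$ for the $\ell^1$-norm and the weaker threshold $d_K\gg M_f^{12}p^{60}$.

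The gap is easily repaired by choosing the contour differently, which is what the paper does: for $\sigma=\begin{psmallmatrix} a & -(aa^\ast+1)/p\\ p & -a^\ast\end{psmallmatrix}$ take $z=\tfrac{a^\ast+i}{p}$, so that $\sigma z=\tfrac{a+i}{p}$, and integrate along the horizontal segment at height $1/p$ (or along the geodesic joining these two points, which also stays at height $\geq 1/p$). On that contour the expansion at $\infty$ alone gives $|f(w)|\ll_\eps \max(1,M_f)\,p^{1+\eps}$, the segment has length $|a-a^\ast|/p\leq 1$, and you recover $|\langle v, f(z)dz\rangle|\ll_\eps M_f\,p^{1+\eps}$ with no reference to a fundamental domain or to the behaviour of $f$ at the cusp $0$. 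With that substitution your proof goes through and yields the stated exponent.
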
 
\begin{proof}
Let
$$f(z)dz= \omega_+ +i\omega_-\in H^1(Y_0(p),\R)\oplus i H^1(X_0(p),\R),$$
be the cohomology class (with complex coefficients) associated to $f$. By expanding in a basic basis $B\subset H_1(Y_0(p),\R)$ as in (\ref{eq:grouptheory}) and using Cauchy--Schwarz followed up by  (\ref{eq:thm1bsc}) we conclude that 
\begin{align*}
\left|\left\langle \frac{\sum_{A\in H}[\mathcal{C}_{A}(p)]}{|\!|\sum_{A\in H}[\mathcal{C}_{A}(p)]|\!|_{B,\infty}}-v_{E}(p) , \omega_\pm \right\rangle\right|\ll_\eps p^{3+\eps} d_K^{-1/12+\eps}\sup_{v\in {B}}|\langle v, \omega_\pm \rangle |.
 \end{align*}
Now by the assumptions on the Fourier expansion of $f$, we get:
\begin{align*} f(x+iy)&\ll 1+ M_{f}\sum_{n\geq 1}\sigma_1(n) e^{-2\pi yn}\ll_\eps 1+M_{f}\int_1^\infty t^{1+\eps} e^{-2\pi ty} \frac{dt}{t}\ll_\eps \mathrm{max} (1, M_{f} y^{-1-\eps}). \end{align*}
 In order to bound the quantities $\langle v, \omega_\pm\rangle$ for $v\in {B}$, we recall that we can write $v=\{\gamma z, z\}$ for $z\in \Hb$ (which we may choose freely) and some matrix $\gamma=\begin{psmallmatrix} a & b \\ c & d \end{psmallmatrix}\in \mathcal{S}(\mathcal{F}_\mathrm{Zag}(p))$ as in Section \ref{sec:zagier}. In particular, we observe that by construction we have $a,b,c,d\ll p$ and $ c\in\{ 0,p\}$. If $c=0$, then clearly $ \langle v, \omega_\pm \rangle\in\{0,1\}$. If $c=p$, we make the following convenient choice $z=\frac{-d}{p}+\frac{i}{p}$ satisfying $\gamma z=\frac{a}{p}+\frac{i}{p}$. This gives 
 $$ \langle v, \omega_\pm \rangle= \int_{\tfrac{a}{p}}^{-d/p} \frac{f(x+ip^{-1})\pm \overline{f(x+ip^{-1})}}{2i^{(1\mp 1)/2}} dx\ll_\eps M_{f} \frac{|a+d|}{p}p^{1+\eps}\ll_\eps M_{f} p^{1+\eps},$$
which we plug into the above. Since $\langle v_{E}(p),\omega_+ \rangle=1$ and $\langle v_{E}(p),\omega_- \rangle=0$ we conclude that $\langle\sum_{A\in H}[\mathcal{C}_{A}(p)] , f(z)dz \rangle$ is indeed non-vanishing for $d_K \gg_\eps (M_{f})^{12+\eps} p^{48+\eps}$, as wanted. 
\end{proof}
 Now Corollary \ref{cor:modular} follows from the above in the case of wide class number one and narrow class number two.
\bibliography{/Users/asbjornnordentoft/Documents/MatematikSamlet/Bib-tex/mybib}
\bibliographystyle{alpha}
\end{document}